\newtheorem{theorem}{Theorem}[section]
\newtheorem{lemma}[theorem]{Lemma}
\newtheorem{remark}[theorem]{Remark}
\theoremstyle{definition}
\theoremstyle{remark}
\newtheorem*{note*}{Note}
\numberwithin{equation}{section}
\newcommand{\rank}{\mathop{\operator@font rank}}
\newcommand{\conv}{\mathop{\operator@font conv}}
\newcommand{\vol}{\mathrm{vol}}
\newcommand{\onetagright}{\tagsleft@false}
\def\fullwidthdisplay{\displayindent\z@ \displaywidth\columnwidth}
\edef\@tempa{\noexpand\fullwidthdisplay\the\everydisplay}
\everydisplay\expandafter{\@tempa}
\newcommand{\B}{\mathrm{B}}
\newcommand{\ls}{\leqslant}
\newcommand{\gr}{\geqslant}
\renewcommand{\epsilon}{\varepsilon}
\begin{document}
\small

\title{\bf Affine quermassintegrals of random polytopes}

\author{Giorgos Chasapis and Nikos Skarmogiannis}

\date{}

\maketitle

\begin{abstract}
\footnotesize A question related to some conjectures of Lutwak about the affine quermassintegrals of a convex body $K$ in ${\mathbb R}^n$
asks whether for every convex body $K$ in ${\mathbb R}^n$ and all $1\ls k\ls n$
$$\Phi_{[k]}(K):=\vol_n(K)^{-\frac{1}{n}}\left (\int_{G_{n,k}}\vol_k(P_F(K))^{-n}\,d\nu_{n,k}(F)\right )^{-\frac{1}{kn}}\ls c\sqrt{n/k},$$
where $c>0$ is an absolute constant. We provide an affirmative
answer for some broad classes of random polytopes. We also discuss upper bounds for $\Phi_{[k]}(K)$ when $K=B_1^n$, the unit ball of $\ell_1^n$, and explain how this special instance has implications for the case of a general unconditional convex body $K$.
\end{abstract}

\section{Introduction}

The \textit{affine quermassintegrals} of a convex body $K$ in ${\mathbb R}^n$ were introduced by Lutwak in \cite{Lutwak-1984}:
they are defined by
\begin{equation*}\Phi_{n-k}(K)=\frac{\omega_n}{\omega_k}\left
(\int_{G_{n,k}}\vol_k(P_F(K))^{-n}d\nu_{n,k}(F)\right )^{-1/n}\end{equation*}
for $1\ls k\ls n-1$, where $\nu_{n,k}$ is the Haar probability measure on the Grassmannian $G_{n,k}$ of all $k$-dimensional
subspaces of ${\mathbb R}^n$ and $\omega_k$ is the volume of the Euclidean unit ball $B_2^k$ in ${\mathbb R}^k$. In what follows, we will also adopt the notational convention $\Phi_0(K) = \vol_n(K)$ and $\Phi_n(K) = \omega_n$.
Grinberg proved in \cite{Grinberg-1990} that these quantities are invariant under volume preserving affine transformations.
Lutwak conjectured in \cite{Lutwak-1988} that the affine quermassintegrals satisfy the inequalities
\begin{equation}\label{eq:lutwak-1}\omega_n^j\Phi_{n-j}(K)^{k}\ls \omega_n^k\Phi_{n-k}(K)^{j}\end{equation}
for all $0\ls k\ls j\ls n$, with equality when $k<j$ if and only if $K$ is an ellipsoid, and, in particular for $j=n$, that
\begin{equation}\label{eq:lutwak-2}\omega_n^{\frac{n-k}{n}}\vol_n(K)^{\frac{k}{n}}\ls\Phi_{n-k}(K)\end{equation}
for all $0\ls k\ls n$ with equality if and only if $K$ is an ellipsoid (see \cite[Chapter 9]{Gardner-book} for related conjectures about dual affine quermassintegrals and references).

The following variant of the quantity $\Phi_{n-k}$ was considered by Dafnis and Paouris in \cite{Dafnis-Paouris-2012}: We define, for every convex body $K$ in $\mathbb{R}^n$ and every $1\ls k\ls n$, the \textit{normalized $k$-th affine quermassintegral} of $K$ by
\[
\Phi_{[k]}(K) := \vol_n(K)^{-\frac{1}{n}}\left (\int_{G_{n,k}}\vol_k(P_F(K))^{-n}\,d\nu_{n,k}(F)\right )^{-\frac{1}{kn}}.
\]
Note that $\Phi_{[k]}(K) = \vol_n(K)^{-\frac{1}{n}}\left(\frac{\omega_k}{\omega_n}\right)^{\frac{1}{k}}\Phi_{n-k}(K)^{\frac{1}{k}}$, so the conjectured inequality \eqref{eq:lutwak-2} can be equivalently restated as
\begin{equation}\label{eq:lutwak-3}
\Phi_{[k]}(K) \gr \Phi_{[k]}(B_2^n).
\end{equation}
When $k=1$ the above inequality follows by the Blaschke-Santal\'{o} inequality, which states that the volume product of a convex body $K$ with
center of mass at the origin and its polar $K^\circ$ is maximal if $K$ is an ellipsoid:
\begin{equation}\label{eq:blaschke-santalo}
\vol_n(K)\cdot\vol_n(K^\circ) \ls \omega_n^2.
\end{equation}
In the case $k=n-1$, note that
\[
\Phi_{[n-1]}(K) = \vol_n(K)^{-\frac{1}{n}}\left(\frac{\vol_n(\Pi^\ast K)}{\omega_n}\right)^{-\frac{1}{n(n-1)}},
\]
where $\Pi^\ast K$ is the polar projection body of $K$ (this is the polar of the convex body $\Pi K$, defined by $h_{\Pi K}(\theta) = \vol_{n-1}(P_{\theta^\perp}K)$ for every $\theta\in S^{n-1}$). Then \eqref{eq:lutwak-3} follows by the Petty projection inequality \cite{Petty}:
\begin{equation}\label{eq:petty-proj}
\vol_n(K)^{n-1}\vol_n(\Pi^\ast K) \ls \left(\frac{\omega_n}{\omega_{n-1}}\right)^n.
\end{equation}

The authors in \cite{Dafnis-Paouris-2012} studied an isomorphic variant of Lutwak's conjecture; they ask if there exist
absolute constants $c_1,c_2>0$ such that for every convex body $K$ in $\mathbb R^n$ and any $1\ls k \ls n-1$,
\begin{equation}\label{eq:phi-conj}c_1\sqrt{n/k}\ls  \Phi_{[k]}(K) \ls c_2\sqrt{n/k}\end{equation}
(recall that $\omega_k^{1/k}$ is of the order of $k^{-1/2}$).
Note that in the case $k=1$, \eqref{eq:phi-conj} follows by the Blaschke-Santal\'{o} and the reverse Santal\'{o} inequality of Bourgain and Milman \cite{BM}, while in the case $k=n-1$ the conjectured rate of growth for $\Phi_{[n-1]}(K)$ is again true, by the Petty projection inequality and its reverse, proved by Zhang \cite{Zhang}.

The left hand side of \eqref{eq:phi-conj} was proved by Paouris and Pivovarov in \cite{Paouris-Pivovarov-2013}; it confirms \eqref{eq:lutwak-1} in an isomorphic sense.

\begin{theorem}[Paouris-Pivovarov]\label{th:main-3}Let $K$ be a convex body in ${\mathbb R}^n$ and $1\ls k \ls n$. Then,
\begin{equation}\label{eq:main-17}\Phi_{[k]}(K)\gr c\sqrt{n/k}.\end{equation}
\end{theorem}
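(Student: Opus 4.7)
The plan is to reduce the claimed lower bound on $\Phi_{[k]}(K)$ to a comparison of $K$ with the Euclidean ball of the same volume, and then establish that comparison via a rearrangement inequality for random polytopes.

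Since $\Phi_{[k]}$ is invariant under volume-preserving affine transformations by Grinberg's theorem \cite{Grinberg-1990}, I may normalize so that $\vol_n(K)=\omega_n$. Let $B=B_2^n$; then $\vol_n(B)=\vol_n(K)$ and $\vol_k(P_FB)=\omega_k$ for every $F\in G_{n,k}$. Using $\omega_n^{1/n}\asymp 1/\sqrt{n}$ and $\omega_k^{1/k}\asymp 1/\sqrt{k}$, the target $\Phi_{[k]}(K)\gr c\sqrt{n/k}$ is equivalent, up to an absolute constant, to the inequality
\[
\int_{G_{n,k}}\vol_k(P_FK)^{-n}\,d\nu_{n,k}(F)\ls C^{kn}\int_{G_{n,k}}\vol_k(P_FB)^{-n}\,d\nu_{n,k}(F)=C^{kn}\omega_k^{-n}.
\]
Thus the task reduces to showing that, up to an $O(1)$ factor in each direction, the Euclidean ball minimizes this negative-moment projection integral among convex bodies of the same volume.

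The natural approach to the displayed inequality is a rearrangement argument for random polytopes. Let $X_1,\ldots,X_N$ be i.i.d.\ uniform in $K$ and $X_1^*,\ldots,X_N^*$ i.i.d.\ uniform in $B$; set $K_N=\conv\{X_1,\ldots,X_N\}$ and $B_N=\conv\{X_1^*,\ldots,X_N^*\}$. Since $K_N\uparrow K$ and $B_N\uparrow B$ almost surely in the Hausdorff metric, by Fubini and monotone convergence it suffices to compare the expectations $\mathbb{E}\int\vol_k(P_FK_N)^{-n}\,d\nu_{n,k}$ and $\mathbb{E}\int\vol_k(P_FB_N)^{-n}\,d\nu_{n,k}$. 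The decisive input is a Pivovarov-type small-ball inequality: for every $t>0$,
\[
\int_{G_{n,k}}\mathbb{P}\bigl(\vol_k(P_FK_N)\ls t\bigr)\,d\nu_{n,k}(F)\ls \int_{G_{n,k}}\mathbb{P}\bigl(\vol_k(P_FB_N)\ls t\bigr)\,d\nu_{n,k}(F).
\]
To prove it, one writes each small-ball probability as an integral over the vertex configuration $(x_1,\ldots,x_N)$ of an indicator depending only on the geometry of $\conv\{x_1,\ldots,x_N\}$ relative to $F$, weighted by the product $\mathbbm{1}_K(x_1)\cdots\mathbbm{1}_K(x_N)$, and then invokes the Brascamp--Lieb--Luttinger rearrangement inequality coordinate-wise: replacing each factor $\mathbbm{1}_K$ by the symmetric decreasing rearrangement $\mathbbm{1}_B$ only increases the relevant average. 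Integrating the resulting small-ball bound against $n\,t^{-n-1}\,dt$ converts it to the required comparison of $-n$ moments.

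The main obstacle is this rearrangement/small-ball step: setting up the multilinear representation to which Brascamp--Lieb--Luttinger can be applied, and tracking the $O(1)^n$ losses carefully so that they do not exceed $C^{kn}$ after taking the $kn$th root. The remaining pieces---affine normalization, the explicit evaluation $\vol_k(P_FB)=\omega_k$, and the passage $N\to\infty$---are soft.
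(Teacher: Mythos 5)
Your route is genuinely different from the one the paper describes. The paper attributes to Paouris--Pivovarov a short, deterministic duality argument: inside each $F\in G_{n,k}$, the Bourgain--Milman inverse Santal\'o inequality gives $\vol_k(P_F K)\gr c^k\omega_k^2/\vol_k(K^\circ\cap F)$, so $\int_{G_{n,k}}\vol_k(P_FK)^{-n}\,d\nu_{n,k}(F)\ls c^{-kn}\omega_k^{-2n}\int_{G_{n,k}}\vol_k(K^\circ\cap F)^n\,d\nu_{n,k}(F)$; the latter integral is bounded via Grinberg's moment-of-sections inequality \eqref{eq:grinberg} applied to $K^\circ$; and the $n$-dimensional Blaschke--Santal\'o inequality then eliminates $\vol_n(K^\circ)$ and yields $\Phi_{[k]}(K)\gtrsim\omega_k^{1/k}/\omega_n^{1/n}\asymp\sqrt{n/k}$ in a few lines. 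Your plan instead compares $K$ with the ball of equal volume directly, through random polytopes and a rearrangement inequality for small-ball probabilities of projection volumes. That is closer in spirit to the machinery Paouris and Pivovarov actually develop than to the way the present paper packages the result, so it is a legitimate alternative outline; but it trades a short argument for a much longer one, and the crucial step as you state it has a real gap.

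The gap is the claim that the Brascamp--Lieb--Luttinger inequality handles the indicator of $\{\vol_k(P_F\conv\{x_1,\ldots,x_N\})\ls t\}$. BLL requires the interaction factors to be indicators (or symmetric-decreasing functions) of linear forms $\sum_j a_j x_j$; the small-ball event for a projection volume of a convex hull is highly nonlinear in $(x_1,\ldots,x_N)$ and is not an intersection of such slabs, so the inequality does not apply as written. The right tool is a shadow-system/Steiner-symmetrization argument: fix $\theta\in S^{n-1}$, slide each vertex to $x_i+s_i\theta$, and prove that $(s_1,\ldots,s_N)\mapsto\vol_k\bigl(P_F\conv\{x_i+s_i\theta\}\bigr)$ is convex (the Groemer--Rogers-type lemma at the heart of Paouris--Pivovarov). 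That convexity is exactly what makes Steiner symmetrization of $K$ in direction $\theta$ increase the small-ball probability, and iterating symmetrizations drives $K$ to the ball. Two further technical points you would need to address: integrability of $\int_{G_{n,k}}\vol_k(P_FK_N)^{-n}\,d\nu_{n,k}$ for finite $N$ (the negative moment can diverge if the configuration is too degenerate), and a justification of the monotone passage $N\to\infty$. Finally, observe that if your small-ball comparison were exact and the limit passage clean, you would obtain the sharp Lutwak inequality $\Phi_{[k]}(K)\gr\Phi_{[k]}(B_2^n)$, a substantially stronger statement than the isomorphic bound in \eqref{eq:main-17}; that this scheme is only known to give the isomorphic bound is a warning sign that the details do not close as smoothly as the sketch suggests, whereas the duality route gives the isomorphic bound essentially for free.
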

The proof of Theorem \ref{th:main-3} relies on a duality argument, that employs the Blaschke Santal\'{o} inequality \eqref{eq:blaschke-santalo} as well as its reverse, combined with an isoperimetric-type inequality on moments of sections of a convex body proved by Grinberg \cite{Grinberg-1990}, according to which
\begin{equation}\label{eq:grinberg}
\vol_n(K)^{-\frac{1}{n}} \left(\int_{G_{n,k}} \vol_{k}(K\cap F)^{n} \,d\nu_{n,k}(F)\right)^{\frac{1}{kn}} \ls \frac{\omega_k^{1/k}}{\omega_n^{1/n}}.
\end{equation}

The main question that we discuss in this note is related to the upper bound in \eqref{eq:phi-conj}. An almost optimal estimate (up to a $\log n$-term) was given by Dafnis and Paouris in \cite{Dafnis-Paouris-2012}. Let us briefly recall their argument: The Aleksandrov inequalities (see \cite[Sections 20.1-20.2]{Burago-Zalgaller-book}
and \cite[Section 6.4]{Schneider-book}) imply that if $K$ is a convex body in ${\mathbb R}^n$ then the sequence
\begin{equation}\label{eq:main-1}Q_k(K)=\left
(\frac{1}{\omega_k}\int_{G_{n,k}}\vol_k(P_F(K))\,d\nu_{n,k}(F)\right
)^{1/k}\end{equation}is decreasing in $k$. In particular, for any $1\ls k\ls n-1$ we have $Q_k(K)\ls Q_1(K)$, which may be written
in the equivalent form
\begin{equation}\label{eq:main-2}\left (\frac{1}{\omega_k}\int_{G_{n,k}}\vol_k(P_F(K))\,d\nu_{n,k}(F)\right )^{\frac{1}{k}}\ls w(K),\end{equation}
where $w(K)$ is the mean width of $K$. Then, by H\"{o}lder's inequality,
\begin{equation*}\left (\int_{G_{n,k}}\vol_k(P_F(K))^{-n}\,d\nu_{n,k}(F)\right )^{-\frac{1}{kn}}
\ls \left (\int_{G_{n,k}}\vol_k(P_F(K))\,d\nu_{n,k}(F)\right )^{\frac{1}{k}}\ls \omega_k^{1/k}w(K).\end{equation*}
Since the term on the left hand side of this inequality is invariant under volume preserving affine transformations, we may assume
that $K$ has minimal mean width, and it is known that in this case we have $w(K)\ls c\sqrt{n}\log n\,\vol_n(K)^{1/n}$ for some absolute
constant $c>0$ (see \cite[Chapter~6]{AGA-book}). Combining the above with the fact that $\omega_k^{1/k}$ is of the order of $1/\sqrt{k}$, we get
\begin{equation}\label{eq:DP}\Phi_{[k]}(K)\ls c_2\sqrt{n/k}\log n.\end{equation}
It was also shown in \cite{Dafnis-Paouris-2012} that
\begin{equation*}\Phi_{[k]}(K)\ls c_3(n/k)^{3/2}\sqrt{\log{(en/k)}}.\end{equation*}
In other words, if $k$ is proportional to $n$ then the upper bound for $\Phi_{[k]}(K)$ is of the order of $1$. The main question that remains open is whether the $\log n$-term in \eqref{eq:DP} can actually be dropped.

In this note we study this question for some broad classes of random polytopes. First, we provide an affirmative answer to the problem for the
class of symmetric random polytopes with at most $e^{\sqrt{n}}$ vertices uniformly distributed on a convex body.
By the affine invariance of the problem, we may concentrate on the isotropic case. Let $N\gr n$ and $x_1,\ldots,x_N$ be independent
random vectors chosen uniformly from an
isotropic convex body $K$ in $\mathbb{R}^n$ (that is, with respect to the normalized Lebesgue measure on $K$). Consider the symmetric random polytope
$$K_N:=\conv\{\pm x_1,\ldots, \pm x_N\}.$$

\begin{theorem}\label{th:random}
Let $K$ be an isotropic convex body in $\mathbb{R}^n$, $1\ls k\ls n$ and $n^2\ls N\ls e^{\sqrt{n}}$.
If $x_1,\ldots,x_N$ are independent random vectors chosen uniformly from $K$, then
$$\Phi_{[k]}(K_N) \ls c\sqrt{n/k}$$ for some absolute constant $c>0$,
with probability greater than $1-\frac{2}{N}$.
\end{theorem}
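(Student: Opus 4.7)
\medskip
\noindent\emph{Proof proposal.}
The plan is to apply the Dafnis--Paouris-style upper bound
$$\Phi_{[k]}(L)\ls \omega_k^{1/k}\cdot\frac{w(L)}{\vol_n(L)^{1/n}},$$
which is valid for any convex body $L\subset\mathbb{R}^n$ by Jensen's inequality together with the Aleksandrov monotonicity $Q_k(L)\ls Q_1(L)$ (cf.\ the derivation of \eqref{eq:DP}), directly to $L=K_N$. Since $\omega_k^{1/k}$ is of the order of $k^{-1/2}$, it suffices to show that, with probability $\gr 1-2/N$,
$$w(K_N)\ls C\sqrt{n}\,\vol_n(K_N)^{1/n}.$$
The Dafnis--Paouris argument loses a $\log n$ factor because it reduces to the minimal mean-width position, where for a general convex body one can only guarantee $w\ls \sqrt{n}\,\log n\,\vol_n^{1/n}$. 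The advantage here is that the ambient body $K$ is already in isotropic position, so both $w(K_N)$ and $\vol_n(K_N)^{1/n}$ can be estimated directly in terms of the isotropic constant $L_K$.

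For the numerator I would first invoke Paouris' large deviation inequality, $\mathbb{P}(|x|\gr Ct\sqrt{n}\,L_K)\ls \exp(-t\sqrt{n})$, and a union bound over the $N\ls e^{\sqrt{n}}$ vertices (with $t$ a sufficiently large absolute constant) to produce the event $\max_{i\ls N}|x_i|\ls C\sqrt{n}\,L_K$ with probability at least $1-1/N$. On this event, using $h_{K_N}(\theta)=\max_i|\langle x_i,\theta\rangle|$, the standard Gaussian maximum bound $\mathbb{E}_g\max_i|\langle g,y_i\rangle|\ls C\max_i|y_i|\sqrt{\log N}$ (for fixed $y_1,\ldots,y_N\in\mathbb{R}^n$), and the transfer from Gaussian to spherical averages, we obtain
$$w(K_N)\ls C'\bigl(\max_{i\ls N}|x_i|\bigr)\sqrt{(\log N)/n}\ls C''\,L_K\sqrt{\log N}.$$
For the denominator I would then appeal to the known lower bound for the volume of a symmetric random polytope with vertices drawn from an isotropic convex body: with probability at least $1-1/N$,
$$\vol_n(K_N)^{1/n}\gr c\,L_K\sqrt{\log(N/n)/n}.$$
Under the hypothesis $N\gr n^2$ we have $\log(N/n)\gr \tfrac{1}{2}\log N$, so this simplifies to $\vol_n(K_N)^{1/n}\gr c'\,L_K\sqrt{(\log N)/n}$.

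Intersecting the two good events (total probability $\gr 1-2/N$), the $L_K$'s cancel and so do the logarithms: the ratio simplifies to $w(K_N)/\vol_n(K_N)^{1/n}\ls C\sqrt{n}$, which combined with $\omega_k^{1/k}\ls c/\sqrt{k}$ yields $\Phi_{[k]}(K_N)\ls c\sqrt{n/k}$. The hard part is the volume lower bound. The mean-width estimate is essentially a direct combination of Paouris' theorem with the Gaussian-maximum inequality; by contrast, the sharp inequality $\vol_n(K_N)^{1/n}\gtrsim L_K\sqrt{\log(N/n)/n}$ relies on the more delicate small-ball and covering arguments underpinning the theory of random polytopes with vertices drawn from a log-concave ensemble, and it is precisely there that the hypothesis $n^2\ls N\ls e^{\sqrt{n}}$ enters in an essential way --- through Paouris at the upper endpoint, and through the comparison $\log(N/n)\asymp \log N$ at the lower endpoint.
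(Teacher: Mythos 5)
Your proposal is correct and relies on the same two structural facts as the paper's proof: the volume lower bound $\vol_n(K_N)^{1/n}\gtrsim L_K\sqrt{\log(N/n)/n}$ coming from the inclusion $K_N\supseteq cZ_q(K)$ (property ({\bf P1})), and the mean-width upper bound $w(K_N)\lesssim L_K\sqrt{\log N}$ (property ({\bf P2})), together with the observation that $N\gr n^2$ forces $\log(N/n)\asymp\log N$ so that the logarithms cancel. The only genuine difference is in how the Grassmannian integral is controlled. You pass directly through H\"older and the Aleksandrov monotonicity $Q_k\ls Q_1=w$, i.e.\ you use the chain $\Phi_{[k]}(K_N)=W_{[k,-n]}(K_N)\ls W_{[k,1]}(K_N)\ls\omega_k^{1/k}\,w(K_N)/\vol_n(K_N)^{1/n}$, which is exactly the Dafnis--Paouris reduction from the introduction applied to $L=K_N$. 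The paper instead applies Markov's inequality to the known bound $Q_k(K_N)\lesssim\sqrt{\log N}\,L_K$ (Lemma~\ref{lem:random-projection}) to find a set $A\subseteq G_{n,k}$ of measure $\gr1-2^{-k}$ on which $\vol_k(P_F(K_N))^{1/k}$ is small, and then bounds $\int_{G_{n,k}}\vol_k(P_F(K_N))^{-n}\,d\nu_{n,k}$ from below by restricting to $A$. Both routes lose only an absolute constant and yield $\bigl(\int\vol_k(P_F(K_N))^{-n}\,d\nu_{n,k}\bigr)^{-1/(kn)}\lesssim\omega_k^{1/k}\sqrt{\log N}\,L_K$; your H\"older reduction is arguably the cleaner of the two, while the paper's Markov/restriction argument is more local on the Grassmannian and in principle allows one to work with a weaker-than-mean-width input. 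Your sketch of ({\bf P2}) (Paouris' deviation estimate, the union bound over $N\ls e^{\sqrt{n}}$ vertices, and the Gaussian maximum/Sudakov-type argument) is a perfectly valid derivation of the mean-width bound, though the paper simply cites it. The total failure probability $2/N$ is accounted for correctly, since the volume estimate actually holds with probability $1-e^{-c\sqrt{N}}\gr1-1/N$.
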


Next, we consider the case of the cone probability measure $\mu_K$ on the boundary $\partial (K)$ of a convex body $K$, which is
defined by
$$\mu_K(B)=\frac{\vol_n(\{rx:x\in B,0\ls r\ls 1\})}{\vol_n(K)}$$
for all Borel subsets $B$ of $\partial (K)$. For any $N\gr n$ we consider independent random points $x_1,\ldots ,x_N$ distributed according to $\mu_K$
and the random polytope $M_N={\rm conv}\{\pm x_1,\ldots ,\pm x_N\}$. We provide a description of the ``asymptotic shape" of $M_N$ which is
parallel to the available description for $K_N$; this can be done with suitable modifications of the theory developed in \cite{DGT} and \cite{DGT2}.
This allows us to prove the analogue of Theorem~\ref{th:random} for this model too.

\begin{theorem}\label{th:random-cone}
Let $K$ be an isotropic convex body in $\mathbb{R}^n$, $1\ls k\ls n$ and $n^2\ls N\ls e^{\sqrt{n}}$.
If $x_1,\ldots,x_N$ are independent random vectors with distribution $\mu_K$, then
$$\Phi_{[k]}(M_N) \ls c\sqrt{n/k}$$ for some absolute constant $c>0$,
with probability greater than $1-\frac{1}{N^2}$.
\end{theorem}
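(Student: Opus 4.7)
The plan is to parallel the proof of Theorem~\ref{th:random}, replacing the uniform-distribution ingredients with their cone-measure analogues. By the same H\"{o}lder / Aleksandrov argument recalled in the introduction,
\[
\Phi_{[k]}(M_N)\ls \omega_k^{1/k}\cdot\frac{w(M_N)}{\vol_n(M_N)^{1/n}}\ls \frac{c}{\sqrt{k}}\cdot\frac{w(M_N)}{\vol_n(M_N)^{1/n}},
\]
so the task reduces to proving, with probability at least $1-1/N^2$, the single geometric inequality $w(M_N)\ls C\sqrt{n}\,\vol_n(M_N)^{1/n}$.

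The right framework is an $L_q$-centroid body with respect to the cone measure, at scale $q\asymp\log N$,
\[
h_{Z_q(\mu_K)}(\theta)=\left(\int_{\partial K}|\langle x,\theta\rangle|^q\,d\mu_K(x)\right)^{1/q}.
\]
A key preliminary observation is that if $Y\sim\mu_K$ and $U$ is uniform on $[0,1]$ independently of $Y$, then $U^{1/n}Y$ is uniformly distributed on $K$, yielding the identity
\[
\int_K|\langle x,\theta\rangle|^q\,\frac{dx}{\vol_n(K)}=\frac{n}{n+q}\int_{\partial K}|\langle x,\theta\rangle|^q\,d\mu_K(x).
\]
Hence $Z_q(\mu_K)$ and the classical $Z_q(K)$ are equivalent up to absolute constants whenever $q\ls n$, which lets us import the standard isotropic bounds $w(Z_q(\mu_K))\ls C\sqrt{q}\,L_K$ and $\vol_n(Z_q(\mu_K))^{1/n}\gr c\sqrt{q/n}\,L_K$ in the range $\log N\ls\sqrt{n}$ we are in.

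With this in hand, the two deterministic sandwich inclusions follow as in \cite{DGT,DGT2} with the cone measure in place of the uniform one. A standard maximum-of-$L_q$-moments estimate, together with a net argument on $S^{n-1}$, gives the ``upper'' inclusion $M_N\subseteq C\,Z_{\log N}(\mu_K)$ with overwhelming probability, hence $w(M_N)\ls C\sqrt{\log N}\,L_K$. For the ``lower'' inclusion one shows, with probability at least $1-1/N^2$, that
\[
\vol_n(M_N)^{1/n}\gr c\,\vol_n(Z_{\log(N/n)}(\mu_K))^{1/n}\gr c'\sqrt{\log(N/n)/n}\,L_K,
\]
the cone-measure analogue of the Dafnis--Giannopoulos--Tsolomitis volume lower bound: the Paouris-type small-ball inequalities required for the argument transfer from the uniform case to $\mu_K$ through the disintegration identity above, and the sharper probability $1-1/N^2$ is afforded by the fact that each $x_i$ lies on $\partial K$, tightening the tail estimates in the net argument.

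Finally, the assumption $n^2\ls N\ls e^{\sqrt{n}}$ forces $\log(N/n)\asymp\log N$, so the two bounds match up and give $w(M_N)/\vol_n(M_N)^{1/n}\ls C\sqrt{n}$, completing the proof. The main obstacle is the lower bound on $\vol_n(M_N)$: one must verify that the covering and small-ball ingredients of \cite{DGT,DGT2} continue to function when sampling from $\partial K$ under $\mu_K$, with constants independent of $n$. The scale-disintegration identity above is exactly the device that reduces this verification to the already-established uniform-distribution case and keeps all constants absolute.
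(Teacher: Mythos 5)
Your proposal is correct and runs on the same two estimates the paper establishes (the mean-width upper bound $w(M_N)\lesssim\sqrt{\log N}\,L_K$ from Lemma~\ref{lem:P2-cone} and the volume-radius lower bound $\vol_n(M_N)^{1/n}\gtrsim\sqrt{\log(2N/n)/n}\,L_K$ from Lemma~\ref{lem:P1-cone}), so the structural approach is the same. Your final reduction, however, is a genuine shortcut: you apply Jensen/H\"older to pass directly from $\left(\int_{G_{n,k}}\vol_k(P_F(M_N))^{-n}\,d\nu_{n,k}\right)^{-1/(kn)}$ to $\omega_k^{1/k}w(M_N)$, whereas the paper first derives, via Markov's inequality (the analogue of Lemma~\ref{lem:random-projection}), a large set $A\subseteq G_{n,k}$ of subspaces on which $\vol_k(P_F(M_N))^{1/k}$ is controlled, and then bounds the integral below by restricting to $A$. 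Both routes deliver the same quantitative conclusion; yours saves the Markov-on-projections lemma entirely and is cleaner. Two small remarks. First, for the inclusion $M_N\supseteq c\,Z_{\log(N/n)}(K)$ the paper does not re-run the small-ball machinery of \cite{DGT} under $\mu_K$: it couples $\mu_K$-distributed points to uniform points via $x_i=y_i/\|y_i\|_K$, notices $M_N\supseteq K_N$ almost surely, and imports property ({\bf P1}) directly; your suggestion to transfer the small-ball estimates through the disintegration would also work but is more laborious than needed. Second, your heuristic that the probability $1-1/N^2$ is ``afforded by the fact that each $x_i$ lies on $\partial K$'' is not quite right: the improved probability in Lemma~\ref{lem:P2-cone} is simply a consequence of choosing $q_0=2\log(2N)$ in the Markov step (so that $e^{-q_0}=(2N)^{-2}$), and has nothing to do with the support of the measure. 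These are minor, and the disintegration identity $\frac{n}{n+q}\|\langle\cdot,\xi\rangle\|^q_{L^q(\mu_K)}=h_{Z_q(K)}(\xi)^q$ that you flag as the key device is indeed the engine of the paper's Lemma~\ref{lem:P2-cone}.
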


We also study a different model of random polytopes. Given $\beta >-1$, let $\nu_\beta$ be the probability measure supported on $B_2^n$, with density
$p_{n,\beta}(x) = c_{n,\beta}(1-\|x\|_2^2)^\beta $,
where $c_{n,\beta} := \pi^{-n/2}\frac{\Gamma\left(\beta+\frac{n}{2}+1 \right)}{\Gamma(\beta+1)}$.
Fix $N>n$, and let $x_1,\ldots,x_N$ be random vectors, chosen independently according to the measure $\nu_\beta$. The beta polytope
in ${\mathbb R}^n$ (with parameter $\beta $) is the random polytope
$$P_{N,n}^{\beta} := \conv\{x_1,\ldots,x_N\}.$$

\begin{theorem}\label{th:random-beta}
 Let $\beta>-1$ and $x_1,\ldots,x_N$ be independent random points in $\mathbb{R}^n$, distributed according to $\nu_\beta$. If
 $k\gr \log\left(n\left(1+\log\left(4\sqrt{\beta+\frac{n}{2}+1}\right)\right)\right)$ and $N\gr c_0^{\beta+\frac{n+1}{2}}$, where $c_0>0$ is an absolute constant, then
\[
\Phi_{[k]}(P_{N,n}^{\beta}) \ls c\sqrt{n/k}
\]
with probability greater than $1-e^{-k}$, where $c>0$ is an absolute constant.
\end{theorem}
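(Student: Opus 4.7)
The plan is to follow the same route as the Dafnis--Paouris estimate \eqref{eq:DP}, reducing to the ratio $w(K)/\vol_n(K)^{1/n}$, and to exploit the rotational invariance of $\nu_\beta$ to show that under the hypothesis on $N$ the polytope $P_{N,n}^\beta$ contains a Euclidean ball of absolute radius. By H\"older's inequality and the Aleksandrov estimate \eqref{eq:main-2}, for every convex body $K$,
\begin{equation*}
\Phi_{[k]}(K)\ls \omega_k^{1/k}\,\frac{w(K)}{\vol_n(K)^{1/n}}\ls \frac{C}{\sqrt{k}}\cdot\frac{w(K)}{\vol_n(K)^{1/n}},
\end{equation*}
so it is enough to show that $w(P_{N,n}^\beta)/\vol_n(P_{N,n}^\beta)^{1/n}\ls c\sqrt{n}$ with probability at least $1-e^{-k}$. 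For the mean width, the deterministic inclusion $P_{N,n}^\beta\subseteq B_2^n$ gives $w(P_{N,n}^\beta)\le w(B_2^n)=2$, so the only remaining task is the volumetric lower bound.

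The key step is the inclusion $P_{N,n}^\beta\supseteq c_1 B_2^n$ with probability at least $1-e^{-k}$, for some absolute $c_1>0$. For every $\theta\in S^{n-1}$, the marginal $\langle x_1,\theta\rangle$ has density proportional to $(1-t^2)^{\beta+(n-1)/2}$ on $[-1,1]$ with normalising factor $c_{n,\beta}/c_{n-1,\beta}\asymp\sqrt{\beta+n/2+1}$ (by Stirling). Combined with the elementary estimate $(1-t^2)^{\beta+(n-1)/2}\gr e^{-2(\beta+(n-1)/2)t^2}$, valid for $|t|\ls 1/2$, a direct calculation yields the small-ball estimate
\begin{equation*}
\mathbb{P}(\langle x_1,\theta\rangle\gr c_1)\gr \exp(-c_2(\beta+n/2))
\end{equation*}
for suitable absolute constants $c_1,c_2>0$. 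Taking a $(c_1/2)$-net on $S^{n-1}$ of cardinality at most $(C/c_1)^n$, using the $1$-Lipschitz bound on $h_{P_{N,n}^\beta}$ coming from $P_{N,n}^\beta\subseteq B_2^n$, and applying a union bound over the net give
\begin{equation*}
\mathbb{P}\bigl(P_{N,n}^\beta\not\supseteq (c_1/2)B_2^n\bigr)\ls (C/c_1)^n\bigl(1-e^{-c_2(\beta+n/2)}\bigr)^N\ls \exp\bigl(C'n-Ne^{-c_2(\beta+n/2)}\bigr).
\end{equation*}

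Under the hypothesis $N\gr c_0^{\beta+(n+1)/2}$, choosing $c_0$ large enough that $\log c_0>2c_2$, one checks that $Ne^{-c_2(\beta+n/2)}\gr \exp(c_3(n+\beta))$ for some absolute $c_3>0$, so the right-hand side is at most $e^{-k}$ for every $k$ allowed by the statement; the explicit lower bound $k\gr \log(n(1+\log(4\sqrt{\beta+n/2+1})))$ is precisely what absorbs the Stirling-type logarithmic corrections that appear in the small-ball estimate above. The inclusion $P_{N,n}^\beta\supseteq (c_1/2)B_2^n$ then gives $\vol_n(P_{N,n}^\beta)^{1/n}\gr (c_1/2)\omega_n^{1/n}\gr c_4/\sqrt{n}$, and combined with $w(P_{N,n}^\beta)\le 2$ yields $w(P_{N,n}^\beta)/\vol_n(P_{N,n}^\beta)^{1/n}\ls C\sqrt{n}$, which completes the proof. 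The main technical obstacle is the small-ball estimate on the marginal itself in the range $-1<\beta<0$, where $\nu_\beta$ fails to be log-concave and the standard log-concave small-ball inequalities do not directly apply; the explicit form of the marginal density $(1-t^2)^{\beta+(n-1)/2}$, however, makes the direct calculation feasible uniformly in $\beta>-1$.
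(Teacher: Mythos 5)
Your proof is correct, and it takes a genuinely different and \emph{simpler} route than the paper. The paper's proof proceeds through two lemmas (Lemma~\ref{lem:beta.lower} and Lemma~\ref{lem:beta.upper}): the first establishes the inclusion of a Euclidean ball of near-absolute radius (essentially what you prove as well, via \eqref{eq:B.bounds} and a net argument), while the second uses Kubota's formula together with the expected-volume identity $\mathbb{E}\,\vol_k(P_F(P_{N,n}^\beta))$ from \cite{KTT} and the bound from \cite[Lemma 3.3]{BCGTT} to control $\vol_k(P_F(P_{N,n}^{\beta}))$ for a random $F$, exactly parallel to the proof of Theorem~\ref{th:random}. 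It is Lemma~\ref{lem:beta.upper} that forces the hypothesis $k\gr\log(n(1+\log(4\sqrt{\beta+n/2+1})))$. You bypass all of this: the deterministic inclusion $P_{N,n}^\beta\subseteq B_2^n$ already bounds the mean width, so the Aleksandrov/H\"older reduction $\Phi_{[k]}(K)\ls\omega_k^{1/k}w(K)/\vol_n(K)^{1/n}$, applied directly without passing to minimal mean width position, gives the full result once the ball inclusion holds. This is a real simplification and yields a slightly stronger statement: your argument proves the bound for \emph{every} $1\ls k\ls n$, not just $k\gr\log(\cdots)$, and with probability $1-e^{-cn}$ rather than merely $1-e^{-k}$.

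Two small points. First, your intermediate small-ball estimate $\mathbb{P}(\langle x_1,\theta\rangle\gr c_1)\gr e^{-c_2(\beta+n/2)}$ as literally stated can fail when $\beta+n/2$ is close to $0$, because of the $1/\sqrt{\beta+n/2+1}$ prefactor in \eqref{eq:B.bounds}; this is harmless (adjust $c_2$, or just carry the factor from \eqref{eq:B.bounds} through to the end and absorb it into the choice of $c_0$), but it is worth stating the bound with the factor in. Second, the remark that the hypothesis on $k$ ``absorbs the Stirling-type logarithmic corrections'' is not accurate: in your argument the lower bound on $k$ plays no role at all — the inequality $e^{-cn}\ls e^{-k}$ needed at the end uses only $k\ls n$, and the logarithmic corrections are paid for by the hypothesis on $N$ (through the choice of $c_0$), not by $k$. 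It would be cleaner, and strengthen the result, to drop the restriction on $k$ from the statement when giving this proof.
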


In the last part of this note we study the quantities $\Phi_{[k]}(K)$ for the class of unconditional convex bodies $K$. The emphasis is drawn on the case $K=B_1^n$, since by known results of Bobkov and Nazarov (see Section \ref{sec:unco}) one can show that if $K$ is an unconditional convex body in $\mathbb{R}^n$, then, for every $1\ls k \ls n$,
\[
\Phi_{[k]}(K) \ls c\Phi_{[k]}(B_1^n)
\]
where $c>0$ is an absolute constant. Therefore, we only need to prove the following result for the case $K=B_1^n$.

\begin{theorem}\label{th:unco-1}Let $K$ be an unconditional convex body in ${\mathbb R}^n$. Then, for any $\log n\ls k\ls n$,
\begin{equation}\label{eq:unco-1}\Phi_{[k]}(K)\ls c\sqrt{n/k}\cdot\sqrt{1+\log (n/k)},\end{equation}
where $c>0$ is an absolute constant.
\end{theorem}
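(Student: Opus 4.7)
By the Bobkov-Nazarov reduction $\Phi_{[k]}(K)\ls c\,\Phi_{[k]}(B_1^n)$ recalled just before the theorem, it suffices to handle $K=B_1^n$. The starting observation is the polar identity inside every subspace $F\in G_{n,k}$, namely $(P_F B_1^n)^\circ_F=B_\infty^n\cap F$. The plan is to apply the Blaschke-Santal\'{o} inequality in $F$, which gives
\[
\vol_k(P_F B_1^n)^{-n}\gr \omega_k^{-2n}\,\vol_k(B_\infty^n\cap F)^n,
\]
integrate over $\nu_{n,k}$, and then apply Jensen's inequality to the convex function $x\mapsto x^n$ (noting $\nu_{n,k}$ is a probability measure). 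This chain produces the estimate
\[
\Phi_{[k]}(B_1^n)\ls \vol_n(B_1^n)^{-1/n}\,\omega_k^{2/k}\,\biggl(\int_{G_{n,k}}\vol_k(B_\infty^n\cap F)\,d\nu_{n,k}(F)\biggr)^{-1/k}.
\]

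The average section volume evaluates in polar coordinates via a standard Cauchy-Kubota computation,
\[
\int_{G_{n,k}}\vol_k(B_\infty^n\cap F)\,d\nu_{n,k}(F)=\omega_k\int_{S^{n-1}}\|\theta\|_\infty^{-k}\,d\sigma(\theta),
\]
where $\sigma$ is the uniform probability on $S^{n-1}$. The core task is thus to lower bound the negative moment $\int_{S^{n-1}}\|\theta\|_\infty^{-k}\,d\sigma$. Using the Markov-type inequality $\int\|\theta\|_\infty^{-k}\,d\sigma\gr t^{-k}\,\sigma(\{\|\theta\|_\infty\ls t\})$ with the calibrated level $t=c\sqrt{(1+\log(n/k))/n}$, everything reduces to the small-ball estimate $\sigma(\{\theta\in S^{n-1}:\|\theta\|_\infty\ls t\})\gr e^{-Ck}$.

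I would prove this small-ball bound using the Gaussian representation $\theta=g/\|g\|$ with $g\sim N(0,I_n)$. On the event $\{\|g\|\gr\sqrt{n/2}\}$, of probability $\gr 1-e^{-cn}$, the inclusion $\{\|g\|_\infty\ls s\}\subset\{\|\theta\|_\infty\ls t\}$ holds for $s=t\sqrt{n/2}\asymp\sqrt{1+\log(n/k)}$. Standard Gaussian Mills-ratio bounds give $\mathbb{P}(|g_1|>s)\ls e^{-s^2/2}/s\ls Ck/n$, so by independence of the coordinates
\[
\mathbb{P}(\|g\|_\infty\ls s)=(1-\mathbb{P}(|g_1|>s))^n\gr e^{-Ck}.
\]
Combining these ingredients yields $\int_{S^{n-1}}\|\theta\|_\infty^{-k}\,d\sigma\gr (c_0n/(1+\log(n/k)))^{k/2}$; plugged into the previous display together with $\vol_n(B_1^n)^{-1/n}\asymp n$ and $\omega_k^{1/k}\asymp 1/\sqrt{k}$, this telescopes to $\Phi_{[k]}(B_1^n)\ls c\sqrt{n/k}\,\sqrt{1+\log(n/k)}$.

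The main delicacy lies in the calibration of $t$: the prefactor $t^{-k}$ and the exponential $e^{-Ck}$ must balance so that the logarithmic factor emerges as $\log(n/k)$ rather than $\log n$ --- the latter being exactly what the crude Urysohn inequality produces in the Dafnis-Paouris bound \eqref{eq:DP}. The hypothesis $k\gr\log n$ ensures that all implicit constants in these Gaussian tail estimates remain absolute; the remaining range $k<\log n$ can be handled separately, for instance by combining Blaschke-Santal\'{o} with the Bourgain-Milman reverse Santal\'{o} inequality to obtain the cruder but sufficient bound $\Phi_{[k]}(K)\ls c\sqrt{n}$.
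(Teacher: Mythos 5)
Your argument is correct (up to routine tuning of absolute constants) and is genuinely different from the paper's. The paper works directly on the Grassmannian: it invokes the Johnson--Lindenstrauss lemma to show that for a random $F\in G_{n,k}$ (with $k\gr\log n$) all projections $P_F(e_j)$ have length $\lesssim\sqrt{k/n}$, then applies the Gluskin/Carl--Pajor/B\'{a}r\'{a}ny--F\"{u}redi lower bound for the volume of a symmetric polytope with few unit normals to bound $\vol_k(P_F(B_1^n))$ from above on a set of $\nu_{n,k}$-measure $1-e^{-k}$, and finally restricts the defining integral of $\Phi_{[k]}$ to that good set. You instead pass to the polar $(P_F B_1^n)^\circ_F=B_\infty^n\cap F$ via Blaschke--Santal\'{o}, lower-bound the negative moment by a single Jensen step to reduce to the first moment of section volumes, rewrite that first moment via the polar integration formula as $\omega_k\int_{S^{n-1}}\|\theta\|_\infty^{-k}\,d\sigma$, and estimate this via a Gaussian small-ball bound. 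Your route is more elementary (Gaussian tails replace JL and Gluskin) and, as a bonus, does not in fact need the hypothesis $k\gr\log n$ at all: the restriction enters the paper's argument only through the JL net cardinality, whereas your Gaussian computation works for every $1\ls k\ls n$ once the cutoff $a\sqrt{n}$ for $\|g\|_2$ and the constant $c_3$ in $s=c_3\sqrt{1+\log(n/k)}$ are chosen so that the small-ball estimate $e^{-C_2k}$ dominates the Gaussian-norm tail $e^{-c_1n}$ uniformly in $k$; with your literal choice $\sqrt{n/2}$ and $c_3=2$ the two exponentials compete when $k$ is a large fraction of $n$, but this is purely a matter of absolute constants. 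Your final comment about handling $k<\log n$ separately is therefore unnecessary, and also not quite accurate (the crude bound $c\sqrt{n}$ you cite does not obviously follow from Santal\'{o}/Bourgain--Milman for general $k>1$), but since the theorem statement only asserts the range $k\gr\log n$ this has no bearing on correctness. One trade-off worth noting: the paper's pointwise control of $\vol_k(P_F B_1^n)$ on a large subset of $G_{n,k}$ is what feeds Theorem~\ref{th:unco-2}; your Jensen step averages this information away and would not give the improved $\log$-free bound on $W_{[k,-p]}(B_1^n)$ for large $p$.
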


More generally, for any $p\neq 0$ one may consider the quantity
\begin{equation*}W_{[k,p]}(K)=\vol_n(K)^{-\frac{1}{n}}\left (\int_{G_{n,k}}\vol_k(P_F(K))^{p}\,d\nu_{n,k}(F)\right )^{\frac{1}{kp}}\end{equation*}
and study its behavior with respect to $p$, $n$ and $k$ in the case where $K$ is a convex body in ${\mathbb R}^n$ (note that $\Phi_{[k]}(K)=W_{[k,-n]}$).
In the unconditional case, studying the case $K=B_1^n$ and using the fact that $W_{[k,-p]}(K)\ls cW_{[k,-p]}(B_1^n)$
for all $p$, we provide bounds for the ``minimal value" of $p$ for which $W_{[k,-p]}(K)\ls c\sqrt{n/k}$.

\begin{theorem}\label{th:unco-2}Let $K$ be an unconditional convex body in ${\mathbb R}^n$. Then, for any $1\ls k\ls n$
and any $p\gr c_1(n-k)\log n$ we have
\begin{equation}\label{eq:unco-2}W_{[k,-p]}(K)\ls c_2\sqrt{n/k},\end{equation}
where $c_1,c_2>0$ are absolute constants.
\end{theorem}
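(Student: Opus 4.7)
The plan is to first reduce to the case $K = B_1^n$ via the Bobkov--Nazarov comparison stated in the paragraph preceding Theorem~\ref{th:unco-1}, which gives $W_{[k,-p]}(K) \le c\,W_{[k,-p]}(B_1^n)$ for any unconditional $K$. Using $\vol_n(B_1^n)^{1/n}\asymp 1/n$, the target bound $W_{[k,-p]}(B_1^n)\le c_2\sqrt{n/k}$ is equivalent to the moment estimate
\[
\int_{G_{n,k}} \vol_k(P_F B_1^n)^{-p}\,d\nu_{n,k}(F) \;\ge\; \bigl(c\sqrt{kn}\bigr)^{kp}.
\]
I would then exploit duality: since $(P_F B_1^n)^{\circ}$ taken in $F$ equals $B_\infty^n \cap F$, the Blaschke--Santal\'o inequality inside $F$ gives $\vol_k(P_F B_1^n)^{-p}\ge \omega_k^{-2p}\vol_k(B_\infty^n\cap F)^p$, and after absorbing the factor $\omega_k^{2} \asymp (C/k)^k$ the problem becomes the dual moment bound
\[
\int_{G_{n,k}} \vol_k(B_\infty^n \cap F)^p\,d\nu_{n,k}(F) \;\ge\; \bigl(c'\sqrt{n/k}\bigr)^{kp}.
\]

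Next, I would produce this lower bound by exhibiting a Grassmannian neighborhood on which the cube section has size $\gtrsim c^k (n/k)^{k/2}$. Partition $\{1,\ldots,n\}$ into $k$ blocks $B_j$ of size $m = \lfloor n/k\rfloor$ and put $v_j = m^{-1/2}\sum_{i\in B_j} e_i$. At the \emph{block-averaging} subspace $F_0 = \mathrm{span}\{v_1,\ldots,v_k\}$ one checks that $B_\infty^n \cap F_0 = \{\sum_j c_j v_j : |c_j|\le\sqrt m\}$, a cube in $F_0$ of exact $k$-volume $2^k(n/k)^{k/2}$, already matching the target. To pass to a neighborhood, parametrize nearby subspaces as $F = \{x + Ax : x \in F_0\}$ with $A \in L(F_0, F_0^\perp)$ of operator norm $\epsilon$, and note that for $z = x + Ax$ the standard coordinates satisfy $|z_i| \le m^{-1/2}|c_{j(i)}| + |(Ax)_i|$ with $|(Ax)_i|\le \epsilon\sqrt k\max_j|c_j|$. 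Enforcing all $n$ inequalities $|z_i|\le 1$ then gives the inclusion $\{|c_j|\le \sqrt m/(1+\epsilon\sqrt n)\}\subseteq B_\infty^n\cap F$, so for $\epsilon\le 1/\sqrt n$ one retains $\vol_k(B_\infty^n\cap F)\ge c_3^k (n/k)^{k/2}$ throughout the Grassmannian $\epsilon$-ball $U_\epsilon$ around $F_0$.

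Finally, combining with the standard lower bound $\nu_{n,k}(U_\epsilon)\ge (c\epsilon)^{k(n-k)}$, reflecting $\dim G_{n,k} = k(n-k)$, gives
\[
\int_{G_{n,k}}\vol_k(B_\infty^n\cap F)^p\,d\nu \;\ge\; (c/\sqrt n)^{k(n-k)} \bigl(c_3(n/k)^{1/2}\bigr)^{kp}.
\]
Taking $(kp)$-th roots, the right-hand side becomes $(c/\sqrt n)^{(n-k)/p}\cdot c_3\sqrt{n/k}$, and the prefactor is bounded below by an absolute constant precisely when $(n-k)\log n / p$ is bounded, i.e.\ $p\ge c_1(n-k)\log n$. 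The principal obstacle in this approach is the perturbation step: one has to track all $n$ defining inequalities of the cube uniformly under a Grassmannian perturbation, and the correct stability scale $\epsilon\asymp 1/\sqrt n$ --- rather than the naive $1/k$ --- emerges only after observing that $|Ax|\le\epsilon\sqrt k\max_j|c_j|$ feeds into each constraint as an additive error of order $\epsilon\sqrt{km}=\epsilon\sqrt n$. It is this identification of scales that pins down the threshold $p\gtrsim (n-k)\log n$ appearing in the theorem.
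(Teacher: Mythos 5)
Your proof is correct and follows the same overall skeleton as the paper's argument in Section~\ref{sec:unco}: reduction to $B_1^n$ via Bobkov--Nazarov, the block-averaging subspace $F_0=\mathrm{span}\{v_1,\ldots,v_k\}$ of Lemma~\ref{lem:cross-3}, a perturbation argument stable up to Grassmannian radius $\epsilon\asymp 1/\sqrt{n}$, and Szarek's lower bound $\nu_{n,k}(B_d(F_0,\epsilon))\gr(c\epsilon)^{k(n-k)}$ followed by a $(kp)$-th root. Where you genuinely diverge is in how the perturbation step is carried out. You pass to the polar side: using Blaschke--Santal\'o in $F$ to trade $\vol_k(P_F B_1^n)^{-1}$ for $\vol_k(B_\infty^n\cap F)$, and then bound the cube section from below by parametrizing nearby subspaces as graphs $F=\{x+Ax:x\in F_0\}$ with $\|A\|\ls\epsilon$ and tracking all $n$ coordinate constraints $|z_i|\ls 1$ explicitly, which is where the scale $\epsilon\sqrt{km}\asymp\epsilon\sqrt{n}$ emerges. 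The paper instead stays on the projection side, establishing Lemma~\ref{lem:cross-5} via a rotation $U$ with $\|I_n-U\|\ls 1/\sqrt{n}$ and the Hausdorff-stability Lemma~\ref{lem:cross-4}, arriving at the inclusion $U(P_E(B_1^n))\subseteq P_{F_0}(B_1^n)+\tfrac{2}{\sqrt{n}}B_{F_0}\subseteq 3P_{F_0}(B_1^n)$ and hence a $3^k$-bound on the volume. The two routes are polar images of each other and yield the same threshold $p\gr c_1(n-k)\log n$; your version is more explicitly computational, while the paper's is a cleaner two-line inclusion once Lemma~\ref{lem:cross-4} is isolated. One small point you leave implicit and should record: the $c$-cube of sidelength $2t$ in $F_0$ maps into $F$ under $T(x)=x+Ax$ with Jacobian $\sqrt{\det(I+A^{*}A)}\gr 1$, so its $k$-volume in $F$ is at least $(2t)^k$; this works in your favor, so nothing breaks, but it closes the gap between ``the image satisfies the constraints'' and ``$\vol_k(B_\infty^n\cap F)\gr(2t)^k$''. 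Also note that the paper restricts Lemma~\ref{lem:cross-3} to $k\ls n/10$ and invokes Lemma~\ref{lem:cross-2} for larger $k$; your construction in fact handles all $1\ls k\ls n$ in one stroke (for $k$ near $n$ the blocks become singletons and $F_0$ is a coordinate subspace), which is a minor simplification.
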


\section{Notation and background on isotropic convex bodies}

We work in ${\mathbb R}^n$, which is equipped with a Euclidean structure $\langle\cdot ,\cdot\rangle $. We denote by $B_2^n$ and $S^{n-1}$ the Euclidean unit ball and sphere
in ${\mathbb R}^n$ respectively. We write $\sigma $ for the normalized rotationally invariant probability measure on $S^{n-1}$ and $\nu $
for the Haar probability measure on the orthogonal group $O(n)$. Let $G_{n,k}$ denote the Grassmannian of all $k$-dimensional
subspaces of ${\mathbb R}^n$. Then, $O(n)$ equips $G_{n,k}$ with a Haar probability measure $\nu_{n,k}$. We write $\vol_k$ for $k$-dimensional volume and $\|x\|_2$ for the Euclidean norm of $x$. The letters $c,c^{\prime }, c_1, c_2$ etc. denote absolute positive constants which may change from line to line. Since usually the exact numerical values of such absolute constants are not relevant, we further relax our notation:  $a\lesssim b$ will then mean ``$a\ls cb$ for some (suitable) absolute constant $c>0$'', and $a \asymp b$ will stand for ``$a \lesssim b \land a \gtrsim b$".

We refer to the book of Schneider \cite{Schneider-book} for basic facts from the Brunn-Minkowski theory and to the book of Artstein-Avidan, Giannopoulos and V. Milman \cite{AGA-book} for basic facts from asymptotic convex geometry.

\smallskip

A convex body in ${\mathbb R}^n$ is a compact convex subset $K$ of ${\mathbb R}^n$ with non-empty interior. We say that $K$ is
symmetric if $x\in K$ implies that $-x\in K$, and that $K$ is centered if its barycenter
is at the origin. The polar body of $K$ is denoted by $K^{\circ }$. The volume radius of $K$ is the quantity
${\rm vrad}(K)=(\vol_n(K)/\vol_n(B_2^n))^{1/n}$. Every convex body can be naturally associated to a probability measure $\lambda_K$ on $\mathbb{R}^n$, given by the normalized Lebesgue measure
\[
\lambda_K(A) := \frac{\vol_n(A\cap K)}{\vol_n(K)},
\]
for every measurable subset of $\mathbb{R}^n$. We call $\lambda_K$ the uniform probability measure on $K$.

The support function $h_K : \mathbb{R}^n\to \mathbb{R}$ of $K$ is defined by
$h_K(\xi ) = \max_{x\in K} \langle x,\xi \rangle $. The circumradius $R(K)$ is the radius of the smallest Euclidean ball enclosing $K$, that is
$R(K) := \min \{r>0 : K\subseteq rB_2^n\}$. Equivalently, $R(K) = \max_{\xi\in S^{n-1}} h_K(\xi)$.
The inradius $r(K)$ of $K$ is the radius of the largest Euclidean ball that lies inside $K$, i.e.
$r(K) := \max\{r>0 : rB_2^n \subseteq K\}$. As with $R(K)$, one can check that $r(K) = \min_{\xi \in S^{n-1}} h_K(\xi)$.
The mean width of $K$ is the average
$$w(K) := \int_{S^{n-1}} h_K(\xi)\,d\sigma(\xi).$$
More generally one can define, for any $q\in[-n,n]$, $q\neq0$,
\[
w_q(K) := \left(\int_{S^{n-1}} h_K(\xi)^q\,d\sigma(\xi) \right)^{1/q}.
\]
These quantities are usually referred to as the mixed widths of $K$.

A convex body $K$ in ${\mathbb R}^n$ is called isotropic if it has volume $1$, it is centered,
and its inertia matrix is a multiple of the identity matrix:
there exists a constant $L_K >0$ such that
\begin{equation*}\label{isotropic-condition} \int_K\langle x,\xi\rangle^2dx =L_K^2\end{equation*}
for every $\xi $ in the Euclidean unit sphere $S^{n-1}$. The hyperplane conjecture
asks if there exists an absolute constant $C>0$ such that
\begin{equation*}\label{HypCon} L_n:= \max\{ L_K:K\ \hbox{is isotropic in}\ {\mathbb R}^n\}\ls C\end{equation*}
for all $n\gr 1$. Bourgain proved in \cite{Bourgain-1991} that $L_n\ls
c\sqrt[4]{n}\log\! n$, while Klartag \cite{Klartag-2006}
obtained the bound $L_n\ls c\sqrt[4]{n}$. In the sequel we will need a number of notions introduced (and results proved by a series of authors) in works closely related to the above problem. We refer the reader to the book of Brazitikos, Giannopoulos, Valettas and Vritsiou \cite{BGVV-book} for an updated exposition of the theory of isotropic convex bodies (and log-concave measures) and more information on the hyperplane conjecture.

The $L_q$-centroid bodies were introduced, under a different normalization, by Lutwak and Zhang in \cite{LZ}, and studied by Lutwak, Yang and Zhang in \cite{LYZ}. Paouris was the first to exploit their properties from an asymptotic point of view. We shall use his notation and normalization:
If $K$ is a convex body in $\mathbb{R}^n$ with $\vol_n(K)=1$, for any $q\gr 1$ we define the $L_q$-centroid body of $K$, denoted $Z_q(K)$, via its support function
\[
h_{Z_q(K)}(\xi) := \|\langle \cdot,\xi\rangle\|_{L_q(K)} = \left( \int_K |\langle x,\xi\rangle|^q\,dx\right)^{1/q}, \hspace{10pt} \xi \in S^{n-1}.
\]
For $q=+\infty$, we define $Z_\infty(K) := \conv\{K,-K\}$. Some basic properties of this family of bodies are listed below:
\begin{itemize}
\item[\rm(a)] If $K$ is isotropic, then $Z_2(K)= L_KB_2^n$.
\item[\rm(b)] For all $1\ls p < q \ls \infty$ and $\xi\in S^{n-1}$ we have $\|\langle\cdot ,\xi\rangle\|_{L^q(\lambda_K)}
\ls c_1\frac{q}{p}\|\langle\cdot ,\xi\rangle\|_{L^p(\lambda_K)}$, and hence $Z_p(K) \subseteq Z_q(K) \subseteq c_1\frac{q}{p} Z_p(K)$,
where $c_1>0$ is an absolute constant.
\item[\rm(c)] If $K$ is centered, then $Z_q(K) \supseteq c_2Z_\infty(K)$, for every $q\gr n$, where $c_2>0$ is an absolute constant.
\end{itemize}
The assertion (a) above is straightforward by the definition of $Z_2(K)$, while (b) is a consequence of reverse H\"{o}lder inequalities for seminorms that hold due to Borell's Lemma \cite{Borell}, see also \cite[Lemma 2.4.5 and Theorem 2.4.6]{BGVV-book}. Fact (c) was first observed by Paouris \cite{Pao2}, see also \cite[Lemma 3.2.8]{BGVV-book}.

The volume of the $L_q$-centroid bodies is an important question, which is not yet completely understood. We collect the known estimates
in the next theorem.

\begin{theorem}\label{thm.vol.Z_q}
Let $K$ be an isotropic convex body in $\mathbb{R}^n$.
\begin{itemize}
\item[\rm(a)] Lutwak, Yang and Zhang have proved in {\rm \cite{LYZ}} that, for every $1\ls q\ls n$,
\begin{equation}\label{eq.Zq.vrad.lower.weak}
\vol_n(Z_q(K))^{1/n} \gtrsim \sqrt{q/n}.
\end{equation}
\item[\rm(b)] Klartag and E.~Milman have proved in {\rm \cite{KM}} that if $q\ls \sqrt{n}$ then the estimate of {\rm(a)} above can be strengthened to
\begin{equation}\label{eq.Zq.vrad.lower}
\vol_n(Z_q(K))^{1/n} \gtrsim \sqrt{q/n}L_K.
\end{equation}
\item[\rm(c)] On the other hand, Paouris has proved in {\rm \cite{Pao3}} that the estimate
\begin{equation}\label{eq.Zq.vrad.upper}
\vol_n(Z_q(K))^{1/n} \lesssim \sqrt{q/n}L_K
\end{equation}
holds for every $1\ls q\ls n$.
\end{itemize}
\end{theorem}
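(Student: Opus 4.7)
The plan is to reduce to $K=B_1^n$ via the inequality $W_{[k,-p]}(K)\lesssim W_{[k,-p]}(B_1^n)$ noted right before the theorem, and then prove $W_{[k,-p]}(B_1^n)\lesssim\sqrt{n/k}$. Since $\vol_n(B_1^n)^{1/n}\asymp 1/n$, the latter is equivalent to the lower bound $\|\vol_k(P_F(B_1^n))^{-1}\|_{L^p(\nu_{n,k})}\gtrsim C^k(nk)^{k/2}$ for an absolute constant $C>0$. Applying the Blaschke--Santal\'o inequality inside each $F\cong\mathbb{R}^k$ to the polar pair $(P_F(B_1^n),B_\infty^n\cap F)$ gives $\vol_k(P_F(B_1^n))\cdot\vol_k(B_\infty^n\cap F)\ls\omega_k^2$; combined with $\omega_k^{-2}\asymp k^k$ this reduces matters to proving
\[
\Bigl(\int_{G_{n,k}}\vol_k(B_\infty^n\cap F)^p\,d\nu_{n,k}(F)\Bigr)^{1/p}\gtrsim C^k(n/k)^{k/2}.
\]

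For this I would single out a block-diagonal subspace and its small neighborhood. Assume $k\mid n$ (the general case being a rounding argument). Given a partition $[n]=I_1\sqcup\cdots\sqcup I_k$ with $|I_j|=n/k$, set $u_j:=\sqrt{k/n}\sum_{i\in I_j}e_i$; the vectors $u_1,\ldots,u_k$ are orthonormal. For $F_0:=\mathrm{span}(u_1,\ldots,u_k)$ and $x=\sum\lambda_j u_j\in F_0$, one has $x_i=\sqrt{k/n}\lambda_{\pi(i)}$, where $\pi:[n]\to[k]$ is the partition map, so $B_\infty^n\cap F_0$ is isometric to $[-\sqrt{n/k},\sqrt{n/k}]^k$ with $\vol_k(B_\infty^n\cap F_0)=2^k(n/k)^{k/2}$. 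To extend this to positive measure, parameterize $F$ near $F_0$ by a linear map $A:F_0\to F_0^\perp$ via $F=(I+A)F_0$. Writing $u_j':=u_j+Au_j$ (an almost-orthonormal basis of $F$), for $\lambda\in\tfrac12[-\sqrt{n/k},\sqrt{n/k}]^k$ one has $|\lambda|\ls\sqrt n/2$, and Cauchy--Schwarz on the $i$-th row of the matrix whose columns are the $Au_j$ bounds
\[
\Bigl|\Bigl(\sum_j\lambda_jAu_j\Bigr)_i\Bigr|\ls\|A\|_F|\lambda|\ls \tfrac12
\]
whenever $\|A\|_F\ls 1/\sqrt n$. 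Hence $\sum_j\lambda_j u_j'\in B_\infty^n$, and since $\det(I+A^TA)^{1/2}\gr 1$ the change-of-variables Jacobian only helps, yielding $\vol_k(B_\infty^n\cap F)\gtrsim(n/k)^{k/2}$.

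Because $G_{n,k}$ has dimension $k(n-k)$, the neighborhood $\mathcal N=\{A:\|A\|_F\ls 1/\sqrt n\}$ has Haar measure $\nu_{n,k}(\mathcal N)\gtrsim e^{-c_0k(n-k)\log n}$ (a Euclidean ball of radius $1/\sqrt n$ in $\mathbb{R}^{k(n-k)}$, normalized by the total volume of the Grassmannian, which contributes only polynomial factors to the exponent). Combining with the pointwise estimate on $\mathcal N$,
\[
\Bigl(\int\vol_k(B_\infty^n\cap F)^p\,d\nu_{n,k}\Bigr)^{1/p}\gtrsim e^{-c_0k(n-k)\log n/p}\cdot C_0^k(n/k)^{k/2},
\]
and for $p\gr c_1(n-k)\log n$ with $c_1$ sufficiently large the exponential factor becomes at least a constant-to-the-$k$-th power, yielding $\gtrsim C^k(n/k)^{k/2}$ as required. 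The main obstacle is the stability step: the Frobenius constraint $\|A\|_F\ls 1/\sqrt n$ forces the measure of the good neighborhood to be $\sim e^{-k(n-k)\log n}$, and the factor $k$ in this exponent (coming from the Grassmannian dimension) neatly cancels against the $k$-th power structure of the target constant $C^k$, pinning the threshold precisely at $p\asymp(n-k)\log n$.
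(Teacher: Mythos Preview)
Your proposal does not address the stated theorem. Theorem~\ref{thm.vol.Z_q} is a background result collecting three known volume estimates for the $L_q$-centroid bodies $Z_q(K)$ of an isotropic convex body, each part attributed to its source (\cite{LYZ}, \cite{KM}, \cite{Pao3}); the paper does not prove it but merely cites it. Nothing in your argument touches $Z_q(K)$, the isotropic position, or any of the estimates (a)--(c).

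What you have actually written is a proof of Theorem~\ref{th:unco-2}: the bound $W_{[k,-p]}(K)\lesssim\sqrt{n/k}$ for unconditional $K$ when $p\gr c_1(n-k)\log n$. For \emph{that} theorem your approach is essentially the paper's (Section~\ref{sec:unco}). Both arguments single out the block-diagonal subspace $F_0$ built from a balanced partition of $[n]$, establish a stability estimate on a neighborhood of $F_0$ of radius $\asymp 1/\sqrt{n}$ in $G_{n,k}$, and then use the Szarek-type lower bound $\nu_{n,k}(B(F_0,\varepsilon))\gtrsim(c\varepsilon)^{k(n-k)}$ to absorb the measure loss into the exponent, pinning the threshold at $p\asymp(n-k)\log n$. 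The only cosmetic difference is that you dualize via Blaschke--Santal\'o at the outset and work with sections $B_\infty^n\cap F$, whereas the paper works directly with the projections $P_F(B_1^n)$ (Lemmas~\ref{lem:cross-3}--\ref{lem:cross-5}); the two computations are equivalent.
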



\smallskip

For any isotropic convex body $K$ in $\mathbb{R}^n$ and any $q\neq 0$, $q>-n$, we define
\[
I_q(K) := \left( \int_K \|x\|_2^q\,dx \right)^{1/q}.
\]
Note that $I_2(K) = \sqrt{n}L_K$, since $K$ is isotropic. A direct computation (see \cite[Lemma 3.2.16]{BGVV-book}) shows that
\begin{equation}\label{eq.Iq.formula}
I_q(K) \asymp \sqrt{n/q}\left( \int_{S^{n-1}}\,\int_K |\langle x,\xi\rangle|^q\,dx\,d\sigma(\xi) \right)^{1/q}=\sqrt{n/q}\,w_q(Z_q(K)).
\end{equation}
A similar identity also holds for negative values of $q$; for every $1\ls q < n$,
\begin{equation}\label{eq.w_{-q}.I_{-q}}
w_{-q}(Z_q(K)) \asymp \sqrt{q/n}\,I_{-q}(K).
\end{equation}
This was proved in \cite{Pao4}, see also \cite[Theorem 5.3.16]{BGVV-book}.

An important result of Paouris (see \cite{Pao3} and \cite{Pao4}) states that the quantities $I_q(K)$ remain constant,
of the order of $\sqrt{n}L_K$, as long as $1\ls |q|\ls \sqrt{n}$.

\begin{theorem}[Paouris]\label{thm.paouris.Iq}
Let $K$ be an isotropic convex body in $\mathbb{R}^n$. Then
\[
I_{-q}(K) \asymp I_q(K)\asymp \sqrt{n}L_K,
\]
for every $1\ls q\ls \sqrt{n}$.
\end{theorem}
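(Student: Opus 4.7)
The plan is to anchor the analysis on the identities \eqref{eq.Iq.formula} and \eqref{eq.w_{-q}.I_{-q}}, which translate $I_q(K)$ and $I_{-q}(K)$ into mixed widths of the $L_q$-centroid bodies, and then feed in the sharp volumetric bounds for $Z_q(K)$ from Theorem \ref{thm.vol.Z_q}. Since $K$ is isotropic, $I_2(K)=\sqrt{n}L_K$, and H\"{o}lder's inequality $I_p(K)\ls I_q(K)$ for $-n<p\ls q$ reduces the theorem to two estimates: the upper bound $I_q(K)\ls C\sqrt{n}L_K$ for $2\ls q\ls\sqrt{n}$ and the lower bound $I_{-q}(K)\gr c\sqrt{n}L_K$ for $1\ls q\ls\sqrt{n}$. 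Via \eqref{eq.Iq.formula} and \eqref{eq.w_{-q}.I_{-q}}, these become the dual statements
\[
w_q(Z_q(K))\ls C\sqrt{q}\,L_K\qquad\text{and}\qquad w_{-q}(Z_q(K))\gr c\sqrt{q}\,L_K,
\]
both of which match the order suggested by $\vol_n(Z_q(K))^{1/n}\asymp\sqrt{q/n}\,L_K$ together with Urysohn.

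A key observation is that the naive two-sided inclusion $Z_2(K)\subseteq Z_q(K)\subseteq c(q/2)Z_2(K)$ coming from Borell's reverse H\"{o}lder (property (b) of the $Z_q$ bodies) only yields $L_K\ls w_{\pm q}(Z_q(K))\ls cqL_K$; it is off by a factor $\sqrt{q}$ in each direction. The extra $\sqrt{q}$ must come from the sharp volumetric control of $Z_q(K)$. For the upper bound I would apply the Paouris upper volume estimate \eqref{eq.Zq.vrad.upper} together with Markov-type tail inequalities on the sphere for $h_{Z_q(K)}$, and then integrate $h_{Z_q(K)}^q$ against the resulting tails; the constraint $q\ls\sqrt{n}$ ensures that the contribution of the tail region remains subcritical and yields $w_q(Z_q(K))\ls C\sqrt{q}L_K$. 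For the lower bound I would use the sharp lower volume estimate \eqref{eq.Zq.vrad.lower} to derive, through a volume comparison on the polar body $Z_q(K)^{\circ}$, that the spherical measure of $\{\theta:h_{Z_q(K)}(\theta)\ls\varepsilon\sqrt{q}L_K\}$ is exponentially small for small $\varepsilon$; integrating $h_{Z_q(K)}^{-q}$ then produces the required lower bound on $w_{-q}(Z_q(K))$.

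The hard part will be this lower-tail estimate for $h_{Z_q(K)}$. Unlike the upper tail, which boils down to a single volume bound plus Markov, controlling how often $h_{Z_q(K)}$ can be \emph{small} is essentially equivalent to Paouris's small-ball inequality
\[
\lambda_K(\{x:\|x\|_2\ls t\sqrt{n}L_K\})\ls t^{c\sqrt{n}},\qquad 0<t<c_0,
\]
which is strictly deeper. Its proof bootstraps the lower volume estimate \eqref{eq.Zq.vrad.lower} across the full range $1\ls q\ls\sqrt{n}$, couples it with Grinberg's inequality \eqref{eq:grinberg} to control $k$-dimensional sections of $K$, and uses an induction on $q$ (powered by Borell's lemma) to propagate concentration from the well-understood case $q=O(1)$ upwards. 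The appearance of the threshold $\sqrt{n}$ in the statement is pinned exactly to the range in which the sharp lower volume bound \eqref{eq.Zq.vrad.lower} is available; beyond it one loses the factor $L_K$ in the volumetric input, and with it the optimal scaling in the conclusion.
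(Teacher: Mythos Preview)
The paper does not prove Theorem~\ref{thm.paouris.Iq}; it is quoted as a result of Paouris with references to \cite{Pao3} and \cite{Pao4}, and is used as a black box (see also the sentence preceding Theorem~\ref{thm.paouris.smallball}, which says that Theorem~\ref{thm.paouris.Iq} \emph{implies} the small-ball estimate). So there is no ``paper's own proof'' to match your sketch against.

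That said, your outline has a structural problem worth flagging. You propose to feed in the volume estimates of Theorem~\ref{thm.vol.Z_q} to obtain the bounds on $w_{\pm q}(Z_q(K))$. But part~(c) of Theorem~\ref{thm.vol.Z_q} is itself attributed to \cite{Pao3}, the same paper establishing the upper half of Theorem~\ref{thm.paouris.Iq}; in Paouris's original argument these two facts are proved together rather than one being a consequence of the other, so using \eqref{eq.Zq.vrad.upper} as input here is circular at the level of the literature. More concretely, the step ``volume upper bound $\Rightarrow$ $w_q(Z_q(K))\ls C\sqrt{q}L_K$ via Markov-type tails'' does not go through as written: a bound on $\vol_n(Z_q(K))^{1/n}$ controls the volume radius, but $w_q$ can in principle be much larger than the volume radius, and a Markov inequality on the sphere for $h_{Z_q(K)}$ would require exactly the kind of concentration you are trying to prove. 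Paouris's actual route to $I_q(K)\lesssim\sqrt{n}L_K$ goes through projections of $Z_q(K)$ onto subspaces of dimension $\asymp q$ and a delicate analysis of their isotropic behaviour, not through a posteriori volumetrics. Your discussion of the lower bound correctly identifies the small-ball inequality as the crux, but note again that in the paper's logical order Theorem~\ref{thm.paouris.smallball} is \emph{derived from} Theorem~\ref{thm.paouris.Iq}, not the other way around.
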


Theorem \ref{thm.paouris.Iq} implies a very useful large deviation estimate (see \cite{Pao3})
as well as a strong small-ball type inequality (see \cite{Pao4}) for isotropic convex bodies.

\begin{theorem}[Paouris]\label{thm.paouris.smallball}
If $K$ is isotropic in $\mathbb{R}^n$, then
\begin{equation*}\vol_n(\{x\in K : \|x\|_2 \gr c_1t\sqrt{n}L_K\}) \ls e^{-t\sqrt{n}}\end{equation*}
for every $t\gr 1$ and
\begin{equation}\label{eq.paouris.smallball}
\vol_n(\{x\in K : \|x\|_2 \ls \varepsilon\sqrt{n}L_K\}) \ls \varepsilon^{c_2\sqrt{n}}
\end{equation}
for every $0<\varepsilon<\varepsilon_0$, where $\varepsilon_0, c_1,c_2>0$ are absolute constants.
\end{theorem}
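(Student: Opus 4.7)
My plan is to deduce both inequalities from Theorem \ref{thm.paouris.Iq} by Markov's inequality applied to $\|x\|_2^{\pm q}$ for an appropriate choice of $q$, treating the normalized Lebesgue measure on $K$ as our underlying probability measure (since $\vol_n(K)=1$).

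For the large deviation part, Markov's inequality gives, for any $q>0$ and $u>0$,
\[
\vol_n(\{x\in K:\|x\|_2\gr u\}) \ls (I_q(K)/u)^q.
\]
For $t=1$, I would take $q=\sqrt{n}$ and use $I_{\sqrt{n}}(K) \lesssim \sqrt{n}\,L_K$ from Theorem \ref{thm.paouris.Iq}; choosing $u=c_1\sqrt{n}\,L_K$ with $c_1$ a large enough absolute constant then gives a bound of $e^{-\sqrt{n}}$. To promote this to the $t$-dependent statement for $t>1$, I would take $q=t\sqrt{n}$ and invoke the reverse H\"older inequality for seminorms under log-concave probability measures (Borell's Lemma, the principle underlying property (b) of $L_q$-centroid bodies recalled in the excerpt), now applied to the seminorm $x\mapsto\|x\|_2$. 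This yields $I_{t\sqrt{n}}(K) \ls c'\cdot t\cdot I_{\sqrt{n}}(K) \ls c''\,t\sqrt{n}\,L_K$. Plugging back into Markov with $u=ec''t\sqrt{n}\,L_K$ gives the required tail bound $e^{-t\sqrt{n}}$, with $c_1$ of the theorem equal to $ec''$.

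For the small-ball part, I would run the reverse Markov argument: for $q>0$,
\[
\vol_n(\{x\in K:\|x\|_2\ls u\})\cdot u^{-q} \ls \int_K \|x\|_2^{-q}\,dx = I_{-q}(K)^{-q},
\]
hence $\vol_n(\{\|x\|_2\ls u\})\ls (u/I_{-q}(K))^q$. I would take $q=\sqrt{n}$ and use the nontrivial lower bound $I_{-\sqrt{n}}(K)\gtrsim\sqrt{n}\,L_K$ from Theorem \ref{thm.paouris.Iq}. Setting $u=\varepsilon\sqrt{n}\,L_K$ gives $\vol_n(\{\|x\|_2\ls\varepsilon\sqrt{n}L_K\})\ls (\varepsilon/c)^{\sqrt{n}}$; for $\varepsilon<\varepsilon_0:=c^2$ the right-hand side is at most $\varepsilon^{\sqrt{n}/2}$, delivering the stated inequality with $c_2=\tfrac12$.

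The principal obstacle is the input Theorem \ref{thm.paouris.Iq}: the negative moment lower bound $I_{-\sqrt{n}}(K)\gtrsim\sqrt{n}\,L_K$ is by far the deepest ingredient, encapsulating the essence of Paouris's argument in \cite{Pao4} (via duality with the volume bounds on $Z_q(K)$ of Theorem \ref{thm.vol.Z_q}). Once that moment information is available, both conclusions reduce to the short Markov estimates sketched above; the only remaining technical point is the log-concave reverse H\"older step needed to push $q$ from $\sqrt{n}$ up to $t\sqrt{n}$ in the large deviation part.
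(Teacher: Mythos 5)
Your proposal is correct and follows precisely the route the paper indicates when it says that Theorem~\ref{thm.paouris.Iq} ``implies'' both estimates: Markov applied to $\|x\|_2^{q}$ with $q=t\sqrt{n}$ (using the log-concave reverse H\"older bound to control $I_{t\sqrt{n}}$ in terms of $I_{\sqrt n}$) for the tail, and the reverse Markov argument with $q=\sqrt n$ and the nontrivial lower bound $I_{-\sqrt n}(K)\gtrsim\sqrt n\,L_K$ for the small-ball estimate. The paper itself does not spell out the deduction (it cites Paouris's original papers), but your sketch is exactly the standard reduction and the constants you extract ($c_1=ec''$, $c_2=\tfrac12$, $\varepsilon_0=c^2$) are consistent.
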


\begin{remark}\rm
A useful application of Theorem \ref{thm.paouris.Iq} is the next estimate for the mean width of $Z_q(K)$, when $q\lesssim \sqrt{n}$.
If $K$ is an isotropic convex body in $\mathbb{R}^n$ then, for every $1\ls q \ls \sqrt{n}$,
\begin{equation}\label{eq.Zq.mean.width}
w(Z_q(K)) \asymp \sqrt{q}L_K.
\end{equation}
This estimate is a standard consequence of the results of Paouris in \cite{Pao3}: note that
$$w_q(Z_q(K)) \asymp \sqrt{q/n} I_q(K) \asymp \sqrt{q/n} I_2(K) = \sqrt{q}L_K.$$
Since $w(Z_q(K))\ls w_q(Z_q(K))$, by H\"{o}lder's inequality, we see that $w(Z_q(K))\lesssim \sqrt{q}L_K$.
For the reverse inequality we use the estimate on the volume of $Z_q(K)$ (Theorem \ref{thm.vol.Z_q} (b)), and Urysohn's inequality to write
\[
w(Z_q(K)) \gr \left(\frac{\vol_n(Z_q(K))}{\omega_n}\right)^{1/n} \gtrsim \sqrt{n}\sqrt{q/n}L_K.
\]
\end{remark}

\section{Random convex hulls in isotropic convex bodies}

Let $N\gr n$ and $x_1,\ldots,x_N$ be independent random vectors chosen uniformly from an isotropic convex body $K$ in $\mathbb{R}^n$. Consider the symmetric random polytope
$$K_N:=\conv\{\pm x_1,\ldots, \pm x_N\}.$$
The next two facts were proved in \cite{DGT} and \cite[Lemma~3.1]{GHiT}:
\begin{enumerate}
\item[({\bf P1})] There exist absolute constants $\alpha ,c_1,c_2,c_3>0$ such that if $N\gr\alpha n$ and $q\ls c_1\log (N/n)$ then the inclusion
\begin{equation}\label{eq:P1}
K_N={\rm conv}(\{ \pm x_1,\ldots ,\pm x_N\})\supseteq c_2 Z_q(K)
\end{equation}
holds with probability greater than $1-e^{-c_3\sqrt{N}}$.
\item[({\bf P2})] For any $q\gr\log N$ and $t\gr 1$, the inequality
\begin{equation}\label{eq:P2}
w(K_N)\ls c_3tw (Z_q(K))
\end{equation}
holds with probability greater than $1-t^{-q}$.
\end{enumerate}
Combining these basic asymptotic properties of a random $K_N$ with the results of the previous section we get:

\begin{theorem}[Dafnis-Giannopoulos-Tsolomitis]\label{thm.vrad.mean.width.general}
Let $n, N \in \mathbb{N}$, and $K$ be an isotropic convex body in $\mathbb{R}^n$.
\begin{itemize}
\item[\rm(a)] If $n\lesssim N\ls e^{\sqrt{n}}$, then
\begin{equation}\label{eq:vol-lower-bound}
\vol_n(K_N)^{1/n} \gtrsim \sqrt{\log(2N/n)/n}L_K
\end{equation}
with probability greater than $1-\exp(-c\sqrt{N})$ for some absolute constant $c>0$.
\item[\rm(b)] If $n\lesssim N\ls e^{\sqrt{n}}$, then for every $1\ls k \ls n$ we have
\[
\sqrt{\log(2N/n)}L_K \lesssim Q_k(K_N)\ls w(K_N) \lesssim \sqrt{\log N}L_K
\]
with probability greater than $1-\frac{1}{N}$.
\end{itemize}
\end{theorem}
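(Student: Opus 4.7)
The plan is to combine the structural inclusions (P1) and (P2) satisfied by $K_N$ with the $L_q$-centroid body estimates from Theorem~\ref{thm.vol.Z_q}(b) and the mean-width identity \eqref{eq.Zq.mean.width}. The key parameter choice throughout is $q\asymp\log(N/n)$; by the hypothesis $N\ls e^{\sqrt{n}}$ this $q$ is at most of order $\sqrt{n}$, so it lies precisely in the regime where both the Klartag--Milman lower bound for $\vol_n(Z_q(K))^{1/n}$ and the mean-width asymptotics for $w(Z_q(K))$ are valid. This choice of $q$ is forced: any larger $q$ leaves the regime of Theorem~\ref{thm.vol.Z_q}(b), and any smaller $q$ fails to extract the $\log(N/n)$ factor from (P1).

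For part (a) I would invoke (P1) with $q=c_1\log(N/n)$: on an event $\Omega_1$ of probability at least $1-e^{-c_3\sqrt{N}}$ we have $K_N\supseteq c_2 Z_q(K)$, whence by \eqref{eq.Zq.vrad.lower}
\[
\vol_n(K_N)^{1/n}\gr c_2\,\vol_n(Z_q(K))^{1/n}\gtrsim\sqrt{q/n}\,L_K\asymp\sqrt{\log(2N/n)/n}\,L_K,
\]
the last comparison being legitimate because $N\gtrsim n$. This is part (a).

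For the lower bound on $Q_k(K_N)$ in part (b) I would stay on $\Omega_1$. The containment $K_N\supseteq c_2 Z_q(K)$ and monotonicity of $\vol_k\circ P_F$ give $Q_k(K_N)\gr c_2 Q_k(Z_q(K))$ for every $1\ls k\ls n$; since the Aleksandrov sequence is decreasing in $k$,
\[
Q_k(Z_q(K))\gr Q_n(Z_q(K))=\bigl(\vol_n(Z_q(K))/\omega_n\bigr)^{1/n}\gtrsim\sqrt{n}\cdot\sqrt{q/n}\,L_K=\sqrt{q}\,L_K,
\]
where I use $\omega_n^{1/n}\asymp 1/\sqrt{n}$ together with \eqref{eq.Zq.vrad.lower} once more. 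Plugging in $q\asymp\log(N/n)$ yields $Q_k(K_N)\gtrsim\sqrt{\log(2N/n)}\,L_K$. For the upper bound on $w(K_N)$ I would apply (P2) with $q=\log N$ (again inside $[1,\sqrt{n}]$ by hypothesis) and a constant $t$, say $t=e^2$, obtaining an event $\Omega_2$ of probability at least $1-1/N^2$ on which $w(K_N)\lesssim w(Z_q(K))\asymp\sqrt{\log N}\,L_K$ via \eqref{eq.Zq.mean.width}. The middle inequality $Q_k(K_N)\ls w(K_N)$ is Aleksandrov's monotonicity, since $w(K_N)=Q_1(K_N)$.

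On $\Omega_1\cap\Omega_2$ all three estimates of (b) hold simultaneously; the failure probability is at most $e^{-c_3\sqrt{N}}+1/N^2\ls 1/N$ once $N$ exceeds a suitable absolute constant (absorbed into the assumption $n\lesssim N$). No step of the argument is genuinely difficult: the only point that truly exploits $N\ls e^{\sqrt{n}}$ is keeping $q$ inside $[1,\sqrt{n}]$ so that Theorem~\ref{thm.vol.Z_q}(b) and \eqref{eq.Zq.mean.width} apply, and this is the main technical constraint of the statement. Beyond this range the Klartag--Milman estimate would have to be replaced by the weaker Lutwak--Yang--Zhang bound, losing the factor of $L_K$ and degrading the conclusion accordingly.
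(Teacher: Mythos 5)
Your argument is correct and is precisely the combination of (P1), (P2), the Klartag--Milman bound $\vol_n(Z_q(K))^{1/n}\gtrsim\sqrt{q/n}\,L_K$, the identity $Q_n(K)=(\vol_n(K)/\omega_n)^{1/n}$, the Aleksandrov chain $Q_n\ls\cdots\ls Q_1=w$, and $w(Z_q(K))\asymp\sqrt{q}\,L_K$ that the paper indicates should be combined (the paper itself defers the details to \cite{DGT}, \cite{DGT2}, and \cite{BGVV-book}). The only minor point worth flagging is that the final union bound $e^{-c_3\sqrt{N}}+N^{-2}\ls N^{-1}$ requires $N$ to exceed an absolute constant depending on $c_3$, which you correctly note can be absorbed into the hypothesis $n\lesssim N$; otherwise the proof matches the intended one.
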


For a proof of all these assertions see \cite{DGT}, \cite{DGT2}, and also \cite[Chapter~11]{BGVV-book}.
Moreover, in the range $n\lesssim N\ls e^{\sqrt{n}}$, one can further check that an upper bound of the order $\sqrt{\log N}L_K$ holds for the
volume radius of a random $k$-dimensional projection of a random $K_N$ (see \cite[Fact~4.6]{DGT2}). Starting from the inequality
\begin{equation*}Q_k(K_N)=\left (\frac{1}{\omega_k}\int_{G_{n,k}}\vol_k(P_F(K_N))\,d\nu_{n,k}(F)\right
)^{1/k}\lesssim \sqrt{\log N}L_K\end{equation*} and applying Markov's inequality, we get:

\begin{lemma}\label{lem:random-projection}If $n\lesssim N\ls e^{\sqrt{n}}$ then with probability greater than $1-\frac{1}{N}$ the random polytope $K_N$
satisfies the following: for every $1\ls k\ls n$ and $t>1$,
\begin{equation}\label{eq.PFKN.upper}
\nu_{n,k}\Big(\Big\{ F\in G_{n,k} : \Big(\frac{\vol_k(P_F(K_N))}{\omega_k} \Big)^{1/k} \ls c_1t\sqrt{\log N}L_K \Big\}\Big) \gr 1-t^{-k}.
\end{equation}
\end{lemma}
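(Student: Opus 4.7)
The plan is to derive the conclusion by a single application of Markov's inequality on the Grassmannian, using the pointwise upper bound on $Q_k(K_N)$ supplied by Theorem~\ref{thm.vrad.mean.width.general}(b) as the sole probabilistic input; once that bound is in place, everything else is deterministic.

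First I would fix the event $E$ in the probability space $(K^N,\lambda_K^{\otimes N})$ on which the conclusion of Theorem~\ref{thm.vrad.mean.width.general}(b) is valid simultaneously for every $1\ls k\ls n$. By that theorem, $\Pr(E)\gr 1-\tfrac{1}{N}$, and on $E$ one has the deterministic estimate
$$Q_k(K_N)\ls c_0\sqrt{\log N}\,L_K \qquad \text{for all } 1\ls k\ls n,$$
for some absolute constant $c_0>0$. I would then work on $E$, treating $K_N$ as fixed, and for each $k$ apply Markov's inequality to the non-negative function $F\mapsto \vol_k(P_F(K_N))$ with respect to $\nu_{n,k}$: for every $\lambda>0$,
$$\nu_{n,k}\{F\in G_{n,k}: \vol_k(P_F(K_N))>\lambda\}\ls \frac{1}{\lambda}\int_{G_{n,k}}\vol_k(P_F(K_N))\,d\nu_{n,k}(F)=\frac{\omega_k Q_k(K_N)^k}{\lambda}.$$
Choosing $\lambda=t^k\omega_k(c_0\sqrt{\log N}\,L_K)^k$, the right-hand side is at most $t^{-k}$, and on the complementary event we get, after taking $k$-th roots and dividing by $\omega_k^{1/k}$,
$$\left(\frac{\vol_k(P_F(K_N))}{\omega_k}\right)^{1/k}\ls c_0 t\sqrt{\log N}\,L_K,$$
which is \eqref{eq.PFKN.upper} with $c_1=c_0$.

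There is no real obstacle here: the one subtle point is the order of quantifiers, namely that the \emph{same} event $E$ of probability at least $1-\tfrac{1}{N}$ must validate the Grassmannian inequality for all $k$ and $t$ simultaneously. This is immediate because Theorem~\ref{thm.vrad.mean.width.general}(b) already delivers the bound on $Q_k(K_N)$ uniformly in $k$ on $E$, while the subsequent Markov step is deterministic and works for any $k$ and any $t>1$ once $K_N$ is fixed.
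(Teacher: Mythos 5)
Your proposal is correct and follows exactly the approach the paper indicates: fix the high-probability event on which $Q_k(K_N)\lesssim\sqrt{\log N}\,L_K$ holds uniformly in $k$ (Theorem~\ref{thm.vrad.mean.width.general}(b)), then apply Markov's inequality to $F\mapsto\vol_k(P_F(K_N))$ on the Grassmannian. The paper states this in a single sentence rather than spelling out the threshold choice $\lambda=t^k\omega_k(c_0\sqrt{\log N}L_K)^k$, but the argument is identical.
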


These estimates suffice for a proof of Theorem~\ref{th:random}.

\begin{proof}[Proof of Theorem~\ref{th:random}]
From Theorem~\ref{thm.vrad.mean.width.general} and Lemma~\ref{lem:random-projection} we know that with probability greater than $1-\frac{1}{N}-e^{-c\sqrt{N}}$,
the random polytope $K_N$ satisfies the volume bound
\begin{equation}\label{eq:vol-bound}\vol_n(K_N)^{1/n} \gtrsim \sqrt{\log(2N/n)/n}L_K\end{equation}
and also $\nu_{n,k}(A)\gr 1-2^{-k}$, where $A=\left\{ F\in G_{n,k} : \left(\frac{\vol_k(P_F(K_N))}{\omega_k} \right)^{1/k} \ls 2c_1\sqrt{\log N}L_K \right\}$. Therefore,
\begin{align*}
\int_{G_{n,k}} \vol_k(P_F(K_N))^{-n}\,d\nu_{n,k}(F) &\gr \int_A \vol_k(P_F(K_N))^{-n}\,d\nu_{n,k}(F) \\
                                                  &\gr (1-2^{-k})(2c_1\sqrt{\log N} \omega_k^{1/k}L_K)^{-kn} \\
                                                  &\gr (4c_1\sqrt{\log N}\omega_k^{1/k}L_K)^{-kn}.
\end{align*}
It follows that, with probability greater than $1-\frac{2}{N}$, we have that for every $1\ls k\ls n$,
\begin{equation}\label{eq.int.PFKN^-n.upper}
\left(\int_{G_{n,k}} \vol_k(P_F(K_N))^{-n}\,d\nu_{n,k}(F) \right)^{-\frac{1}{kn}} \ls c_2\sqrt{\log N}\omega_k^{1/k}L_K.
\end{equation}
Combining with \eqref{eq:vol-bound} we write
\begin{align*}
\Phi_{[k]}(K_N) &= \vol_n(K_N)^{-\frac{1}{n}}\left (\int_{G_{n,k}}\vol_k(P_F(K_N))^{-n}\,d\nu_{n,k}(F)\right )^{-\frac{1}{kn}}\\
&\lesssim \frac{\sqrt{\log N}}{\sqrt{\log(N/n)}} \frac{\sqrt{n}}{\omega_k^{-1/k}} \lesssim \sqrt{n/k},
\end{align*}
since $\omega_k^{-1/k}\asymp \sqrt{k}$ and $\log N \ls 2\log(N/n)$ (because $N\gr n^2$).
\end{proof}

\section{Random polytopes with vertices on convex surfaces}

We assume that $K$ is an isotropic convex body in ${\mathbb R}^n$. Recall that the cone probability measure $\mu_K$ on the
boundary $\partial (K)$ of $K$ is defined by
$$\mu_K(B)=\frac{\vol_n(\{rx:x\in B,0\ls r\ls 1\})}{\vol_n(K)}$$
for all Borel subsets $B$ of $\partial (K)$. For any $N>n$ we consider independent random points $x_1,\ldots ,x_N$ distributed according to $\mu_K$
and the random polytope $M_N={\rm conv}\{\pm x_1,\ldots ,\pm x_N\}$. We can describe the asymptotic shape of $M_N$ with some
modifications of the approach of \cite{DGT}. We start with the next inclusion lemma.

\begin{lemma}\label{lem:P1-cone}
There exist absolute constants $\alpha ,c_1,c_2,c_3>0$ such that if $N\gr\alpha n$ and $q\ls c_1\log (N/n)$ then the inclusion
\begin{equation}\label{eq:P1-cone}
M_N={\rm conv}(\{ \pm x_1,\ldots ,\pm x_N\})\supseteq c_2 Z_q(K)
\end{equation}
holds with probability greater than $1-e^{-c_3\sqrt{N}}$.
\end{lemma}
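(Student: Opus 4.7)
The plan is to follow the proof of property \textbf{(P1)} from \cite{DGT}, which treats the uniform measure, and to verify that it carries over when $\lambda_K$ is replaced by the cone measure $\mu_K$. The entire scheme --- a single-direction Paley--Zygmund small-ball estimate, combined with a net of directions in $\partial Z_q(K)^\circ$ --- goes through as soon as one secures the analogue of the small-ball estimate for $\mu_K$, and this is the only essentially new ingredient.

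The key step I would take is to exploit the polar-coordinate coupling between $\lambda_K$ and $\mu_K$: if $X\sim\lambda_K$, then $X\stackrel{d}{=}TY$ with $Y\sim\mu_K$ and $T\in[0,1]$ independent, having density $nt^{n-1}$. Since $T\ls 1$, we have the almost-sure domination $|\langle X,\xi\rangle|\ls|\langle Y,\xi\rangle|$, and hence for every $\xi\in S^{n-1}$ and every $t>0$,
\[
\mu_K\bigl(\{y\in\partial K:|\langle y,\xi\rangle|\gr t\}\bigr)\;\gr\;\lambda_K\bigl(\{x\in K:|\langle x,\xi\rangle|\gr t\}\bigr).
\]
Combining this with the standard log-concave Paley--Zygmund bound for $\lambda_K$ (a consequence of Borell's reverse H\"older inequality for seminorms, cf.\ \cite[Lemma~2.4.5]{BGVV-book}) yields absolute constants $c_0,c_4>0$ with
\[
\mu_K\bigl(\{y:|\langle y,\xi\rangle|\gr c_0 h_{Z_q(K)}(\xi)\}\bigr)\;\gr\;e^{-c_4 q}
\]
for every $\xi\in S^{n-1}$ and every $q\gr 1$.

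The rest of the argument is then a direct transcription of \cite{DGT}. For a fixed $\xi$, independence of $x_1,\dots,x_N$ gives
\[
\Pr\bigl(h_{M_N}(\xi)<c_0 h_{Z_q(K)}(\xi)\bigr)\ls(1-e^{-c_4 q})^N\ls\exp(-Ne^{-c_4 q}),
\]
and choosing $c_1=1/(2c_4)$ forces $Ne^{-c_4 q}\gr\sqrt{Nn}$ whenever $q\ls c_1\log(N/n)$. Passing to a standard $\delta$-net $\mathcal{N}$ of $\partial Z_q(K)^\circ$ in the $Z_q(K)^\circ$-norm (with absolute $\delta\in(0,1)$ and $|\mathcal{N}|\ls(3/\delta)^n$) and applying the successive-approximation scheme of \cite{DGT}, one upgrades the pointwise estimate $h_{M_N}(\eta)\gr c_0 h_{Z_q(K)}(\eta)$ on $\mathcal{N}$ to the inclusion $M_N\supseteq c_2 Z_q(K)$ for an absolute $c_2>0$. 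A union bound over $\mathcal{N}$ then bounds the failure probability by $(3/\delta)^n\exp(-\sqrt{Nn})\ls e^{-c_3\sqrt{N}}$, provided $N\gr\alpha n$ for a sufficiently large absolute constant $\alpha$.

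The main obstacle, and the only genuinely new piece of work, is the small-ball estimate in the second step. Since $\mu_K$ is not a log-concave measure on $\mathbb{R}^n$ --- it is supported on the $(n-1)$-dimensional surface $\partial K$ --- Borell-type reverse H\"older inequalities cannot be invoked for it directly. What makes the transfer go through with no loss is precisely the pointwise domination $|\langle X,\xi\rangle|\ls|\langle Y,\xi\rangle|$ afforded by the polar decomposition: lower tail bounds on $|\langle\cdot,\xi\rangle|$ can only improve when we pass from the uniform measure to the cone measure, so the entire \cite{DGT} scheme runs unchanged.
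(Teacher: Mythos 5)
Your argument is correct, but it takes a more laborious route than the paper's, and in fact the coupling you introduce already contains the shortcut you miss. The paper uses exactly the same radial coupling $y_i\mapsto x_i=y_i/\|y_i\|_K$ (your $X=TY$) but extracts a \emph{geometric} rather than a \emph{one-dimensional} consequence: since $\|y_i\|_K\ls 1$, each $y_i$ lies on the segment $[-x_i,x_i]$, so $K_N=\conv\{\pm y_1,\ldots,\pm y_N\}\subseteq \conv\{\pm x_1,\ldots,\pm x_N\}$, and this convex hull has the same distribution as $M_N$. Thus $M_N\supseteq c_2Z_q(K)$ follows immediately from $(\mathbf{P1})$ applied to $K_N$, invoked as a black box. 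You instead take only the scalar domination $|\langle X,\xi\rangle|\ls|\langle Y,\xi\rangle|$, transfer the small-ball estimate to $\mu_K$, and then re-run the entire Paley--Zygmund/net/union-bound machinery from \cite{DGT}. That is logically sound --- the small-ball transfer is exactly what the scalar domination gives, and the remaining steps are a faithful transcription --- but it re-proves $(\mathbf{P1})$ from scratch rather than reducing to it. Note that your pointwise inequality $|\langle X,\xi\rangle|\ls|\langle Y,\xi\rangle|$, read uniformly in $\xi$, is precisely $h_{K_N}\ls h_{M_N}$ on the coupled realization, i.e.\ $K_N\subseteq M_N$; had you pushed the coupling that one step further, the rest of your proof would have collapsed to a single line.
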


\begin{proof}We sketch the argument from \cite{Horrmann-Prochno-Thale-2017}. Consider $N$ independent random points $y_1,\ldots ,y_N$
with distribution $\lambda_K$. We define $N$ points $x_1,\ldots ,x_N\in\partial (K)$ as follows: if $y_i\neq 0$ for all $1\ls i\ls N$
then we set $x_i=y_i/\|y_i\|_K$. In all other cases we choose $x_1=\cdots =x_N=u$, where $u$ is an arbitrary point in $\partial (K)$.
Note that for every Borel subset $B$ of $\partial (K)$ we have
$$\lambda_K(\{ y\in K:y/\|y\|_K\in B\})=\mu_K(B),$$
which means that the independent random points $x_1,\ldots ,x_N$ are distributed according to the cone measure $\mu_K$. Therefore,
the distribution of ${\rm conv}\{\pm x_1,\ldots ,\pm x_N\}$ is exactly the same as the distribution of $M_N$. Moreover, we have
$${\rm conv}\Big\{\pm \frac{y_1}{\|y_1\|_K},\ldots ,\pm \frac{y_N}{\|y_N\|_K}\Big\}\supseteq {\rm conv}\{\pm y_1,\ldots ,\pm y_N\}=K_N$$
with probability $1$. Then, the lemma follows from $({\bf P1})$.
\end{proof}

\begin{lemma}\label{lem:P2-cone}
If $n\lesssim N\ls e^{\sqrt{n}}$ and $q_0=2\log (2N)$, then the inequality
\begin{equation}\label{eq:P2-cone}
w(M_N)\lesssim w_{q_0}(Z_{q_0}(K))\asymp \sqrt{\log N}L_K
\end{equation}
holds with probability greater than $1-\frac{1}{4N^2}$.
\end{lemma}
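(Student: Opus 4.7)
The plan is to follow the blueprint of property (P2), replacing the uniform measure $\lambda_K$ by the cone measure $\mu_K$ at the one moment it enters the computation. Starting from the pointwise bound $h_{M_N}(\theta)=\max_{i\ls N}|\langle x_i,\theta\rangle|\ls\bigl(\sum_{i=1}^N|\langle x_i,\theta\rangle|^{q_0}\bigr)^{1/q_0}$, a Jensen step with the convex function $t\mapsto t^{q_0}$ and Fubini yield
\begin{equation*}
\mathbb{E}\,w(M_N)^{q_0}\ls N\int_{S^{n-1}}\mathbb{E}_{\mu_K}|\langle x,\theta\rangle|^{q_0}\,d\sigma(\theta).
\end{equation*}

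The essential new ingredient is the cone-measure analogue of the identity $h_{Z_q(K)}(\theta)^q=\int_K|\langle y,\theta\rangle|^q\,dy$. Since $|\langle\cdot,\theta\rangle|^{q_0}$ is $q_0$-homogeneous, the standard polar decomposition $\int_K f(x)\,dx = n\,\vol_n(K)\int_{\partial K}\int_0^1 f(ru)r^{n-1}\,dr\,d\mu_K(u)$ together with $\vol_n(K)=1$ produces $\mathbb{E}_{\mu_K}|\langle x,\theta\rangle|^{q_0}=\tfrac{n+q_0}{n}\,h_{Z_{q_0}(K)}(\theta)^{q_0}$. Taking $q_0$-th roots, the moment bound becomes
\begin{equation*}
(\mathbb{E}\,w(M_N)^{q_0})^{1/q_0}\ls N^{1/q_0}\bigl(\tfrac{n+q_0}{n}\bigr)^{1/q_0}w_{q_0}(Z_{q_0}(K))\lesssim w_{q_0}(Z_{q_0}(K)),
\end{equation*}
where the last inequality uses that $q_0=2\log(2N)$ forces $N^{1/q_0}\ls e^{1/2}$ and that the standing assumption $N\ls e^{\sqrt n}$ keeps $q_0\lesssim\sqrt n$, so the factor $((n+q_0)/n)^{1/q_0}$ is an absolute constant.

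Markov's inequality then turns this into $\Pr\{w(M_N)\gr c\,t\,w_{q_0}(Z_{q_0}(K))\}\ls t^{-q_0}$ for $t\gr 1$; the choice $t=e$ produces $t^{-q_0}=e^{-2\log(2N)}=(2N)^{-2}=1/(4N^2)$, which is the advertised probability. The remaining equivalence $w_{q_0}(Z_{q_0}(K))\asymp\sqrt{\log N}\,L_K$ is immediate from \eqref{eq.Iq.formula} and Theorem~\ref{thm.paouris.Iq}: $w_{q_0}(Z_{q_0}(K))\asymp\sqrt{q_0/n}\,I_{q_0}(K)\asymp\sqrt{q_0}\,L_K\asymp\sqrt{\log N}\,L_K$, with the estimate $I_{q_0}(K)\lesssim\sqrt n\,L_K$ for values of $q_0$ slightly above $\sqrt n$ supplied by the reverse H\"older inequality from Borell's lemma (property (b) of the $L_q$-centroid bodies). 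No serious obstacle is expected beyond the cone-measure polar identity used in the second paragraph; the rest is routine Jensen-Markov packaging, and the only bookkeeping concern is to verify that the parameter $q_0=2\log(2N)$ falls within the range where Paouris' estimate $I_q(K)\asymp\sqrt n\,L_K$ applies, which is automatic under $N\ls e^{\sqrt n}$.
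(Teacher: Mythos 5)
Your proof is correct and follows a genuinely different (and cleaner) route than the paper's. The paper mirrors the proof of property (P2) from \cite{DGT}: it first derives a pointwise tail bound $\mathbb{P}(h_{M_N}(\xi)\gr t\|\langle\cdot,\xi\rangle\|_{L^q(\mu_K)})\ls Nt^{-q}$ via a union bound, then integrates the ratio $h_{M_N}(\xi)^{q_0}/\|\langle\cdot,\xi\rangle\|_{L^{q_0}(\mu_K)}^{q_0}$ over $S^{n-1}$ by a layer-cake argument (truncating the level set integral at $c_1n$ by means of the bound $M_N\subseteq c\, n L_K B_2^n\subseteq c' n Z_{q_0}(K)$, and choosing $\alpha=2e$ so the $qN\log(c_1n/\alpha)$ term is dominated by $e^{q_0}=(2N)^2$), applies Markov to the resulting random integral, and finishes with H\"older and Cauchy--Schwarz on $S^{n-1}$. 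You instead run a direct moment computation: $\max_i|\langle x_i,\theta\rangle|\ls(\sum_i|\langle x_i,\theta\rangle|^{q_0})^{1/q_0}$, Jensen, and Fubini give $\mathbb{E}\,w(M_N)^{q_0}\ls N\,\tfrac{n+q_0}{n}\,w_{q_0}(Z_{q_0}(K))^{q_0}$, and a single application of Markov with $t=e$ yields the stated probability. The common indispensable ingredient in both arguments is the cone-measure identity $\|\langle\cdot,\theta\rangle\|_{L^{q_0}(\mu_K)}^{q_0}=\tfrac{n+q_0}{n}h_{Z_{q_0}(K)}(\theta)^{q_0}$, and both handle the closing equivalence $w_{q_0}(Z_{q_0}(K))\asymp\sqrt{\log N}\,L_K$ the same way via \eqref{eq.Iq.formula} and Paouris' $I_q$ estimate (with the same mild gloss over the fact that $q_0=2\log(2N)$ can slightly exceed $\sqrt n$, which Borell's reverse H\"older inequality absorbs). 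Your approach is shorter and avoids the truncation/parameter-tuning in the layer-cake step; the paper's has the virtue of making the structural parallel with the $\lambda_K$-case (P2) transparent.
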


\begin{proof}
Let $\xi \in S^{n-1}$. If $X$ is a random vector distributed according to $\mu_K$ then, for any $t>1$ we have
\[
\mathbb{P}(|\langle X,\xi\rangle| \gr t \|\langle\cdot,\xi\rangle\|_{L^q(\mu_K)}) \ls t^{-q},
\]
by Markov's inequality. Therefore,
\begin{equation}\label{eq:weak-reverse}
\mathbb{P}(h_{M_N}(\xi) \gr t\|\langle\cdot,\xi\rangle\|_{L^q(\mu_K)}) = \mathbb{P}\left(\max_{j\ls N} |\langle x_j,\xi\rangle| \gr t\|\langle\cdot,\xi\rangle\|_{L^q(\mu_K)}\right) \ls Nt^{-q}.\end{equation}
Using the identity
\begin{equation*}\int_{{\mathbb R}^n}f(x)\,dx=n\,{\rm vol}_n(K)\int_0^{\infty }r^{n-1}\int_{\partial (K)}f(rx)\,d\mu_K(x)\,dr\end{equation*}
which holds for every integrable function $f:{\mathbb R}^n\to {\mathbb R}$ (see \cite[Proposition~1]{Naor-Romik}) one can check that
$$\int_K|\langle x,\xi\rangle |^qdx=\frac{n}{n+q}\int_{\partial (K)}|\langle x,\xi\rangle |^qd\mu_K(x)$$
for every $q\gr 0$; the computation can be found in \cite[Lemma~3.2]{Prochno-Thale-Turchi}. Equivalently, we may write
\begin{equation}\label{eq.P2-cone.LqmuK-ZqK}
\frac{n}{n+q}\|\langle \cdot ,\xi\rangle\|^q_{L^q(\mu_K)}= h_{Z_q(K)}(\xi )^q.
\end{equation}
Since $M_N \subseteq R(K)B_2^n\subseteq c_2nL_KB_2^n$ and $Z_q(K)\supseteq Z_2(K) \asymp L_KB_2^n$, we have
$$h_{M_N}(\xi ) \ls c_1nh_{Z_q(K)}(\xi )\ls c_1n\|\langle \cdot ,\xi\rangle\|_{L^q(\mu_K)}$$ for every $\xi \in S^{n-1}$. Therefore,
\[
\int_{S^{n-1}} \left( \frac{h_{M_N}(\xi )}{\|\langle \cdot ,\xi\rangle\|_{L^q(\mu_K)}} \right)^q \,d\sigma(\xi)
= \int_0^{c_1n} qt^{q-1} \sigma\left( \{\xi \in S^{n-1} : h_{M_N}(\xi ) \gr t\|\langle \cdot ,\xi\rangle\|_{L^q(\mu_K)}\} \right)\,dt.
\]
Taking expectations and using \eqref{eq:weak-reverse} we get, for every $\alpha >1$,
\[
\mathbb{E}\left( \int_{S^{n-1}} \frac{h_{M_N}(\xi )^q}{\|\langle \cdot ,\xi\rangle\|_{L^q(\mu_K)}^q} \,d\sigma(\xi) \right)
\ls \alpha^q +\int_\alpha^{c_1n} qt^{q-1}Nt^{-q}\,dt = \alpha^q+qN\log\left( \frac{c_1n}{\alpha}\right).
\]
Note that the choice $q_0:=2\log(2N)$ implies $e^{q_0}=(2N)^2 \gtrsim q_0N\log\left( \frac{c_1n}{2e}\right)$, so applying the above for $\alpha=2e$ we get
\[
\mathbb{E}\left( \int_{S^{n-1}} \frac{h_{M_N}(\xi )^{q_0}}{\|\langle \cdot ,\xi\rangle\|_{L^{q_0}(\mu_K)}^{q_0}} \,d\sigma(\xi) \right) \ls  c_2^{q_0},
\]
where $c_2>0$ is an absolute constant. Then by Markov's inequality we get that
\begin{equation}\label{eq.P2-cone.Markov}
\int_{S^{n-1}} \frac{h_{M_N}(\xi )^{q_0}}{\|\langle \cdot ,\xi\rangle\|_{L^{q_0}(\mu_K)}^{q_0}} \,d\sigma(\xi) \ls (c_2e)^{q_0}
\end{equation}
with probability greater than $1-e^{-q_0}=1-\frac{1}{4N^2}$. Now, using successively H\"{o}lder's inequality, the Cauchy-Schwarz
inequality, \eqref{eq.P2-cone.LqmuK-ZqK} and \eqref{eq.P2-cone.Markov}, we write
\begin{align*}
w(M_N)^{q_0} &\ls \Big (\int_{S^{n-1}}h_{M_N}(\xi )^{q_0/2}d\sigma (\xi )\Big )^2\ls w_{q_0}(Z_{q_0}(K))^{q_0}\int_{S^{n-1}}\frac{h_{M_N}(\xi )^{q_0}}{h_{Z_{q_0}(K)}(\xi )^{q_0}}d\sigma (\xi )\\
&\ls \frac{n+q_0}{n}w_{q_0}(Z_{q_0}(K))^{q_0}\int_{S^{n-1}}\frac{h_{M_N}(\xi )^{q_0}}{\|\langle \cdot ,\xi\rangle\|_{L^{q_0}(\mu_K)}^{q_0}}d\sigma (\xi )\\
&\ls 2w_{q_0}(Z_{q_0}(K))^{q_0}(c_2e)^{q_0},
\end{align*}
and we conclude that
$$w(M_N)\lesssim w_{q_0}(Z_{q_0}(K))\lesssim \sqrt{q_0}L_K\asymp \sqrt{\log N}L_K$$
with probability greater than $1-\frac{1}{4N^2}$, taking into account \eqref{eq.Iq.formula} and our choice of $q_0$.
\end{proof}

These two lemmas establish the analogues of $({\bf P1})$ and $({\bf P2})$ in the case of $M_N$. Then, as with $K_N$, we can immediately conclude
the following.

\begin{theorem}\label{th:basic-estimates-cone}
Let $n, N \in \mathbb{N}$, and $K$ be an isotropic convex body in $\mathbb{R}^n$.
\begin{itemize}
\item[\rm(a)] If $n\lesssim N\ls e^{\sqrt{n}}$, then
\[
\vol_n(M_N)^{1/n} \gtrsim \sqrt{\log(2N/n)/n}L_K
\]
with probability greater than $1-\exp(-c\sqrt{N})$ for some absolute constant $c>0$.
\item[\rm(b)] If $n\lesssim N\ls e^{\sqrt{n}}$, then for every $1\ls k \ls n$ we have
\[
\sqrt{\log(2N/n)}L_K \lesssim Q_k(M_N)\ls w(M_N) \lesssim \sqrt{\log N}L_K
\]
with probability greater than $1-\frac{1}{4N^2}$.
\end{itemize}
\end{theorem}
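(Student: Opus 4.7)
The strategy is to carry out verbatim the argument sketched for Theorem~\ref{thm.vrad.mean.width.general}, with Lemmas~\ref{lem:P1-cone} and~\ref{lem:P2-cone} substituting for properties $(\mathbf{P1})$ and $(\mathbf{P2})$. The point is that once the right inclusion and the right mean-width comparison for $M_N$ are available, the rest is just a combination with Theorem~\ref{thm.vol.Z_q} and the Aleksandrov monotonicity of the sequence $Q_k$.

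For part (a), I would apply Lemma~\ref{lem:P1-cone} with $q := c_1 \log(2N/n)$; the range $n \lesssim N \ls e^{\sqrt{n}}$ ensures $q \ls c\sqrt{n}$, so Theorem~\ref{thm.vol.Z_q}(b) yields $\vol_n(Z_q(K))^{1/n} \gtrsim \sqrt{q/n}\, L_K \asymp \sqrt{\log(2N/n)/n}\,L_K$. Taking volumes in the inclusion $M_N \supseteq c_2 Z_q(K)$ then gives the asserted lower bound with probability at least $1 - e^{-c_3 \sqrt{N}}$.

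For part (b), Lemma~\ref{lem:P2-cone} directly produces the upper bound $w(M_N) \lesssim w_{q_0}(Z_{q_0}(K)) \asymp \sqrt{\log N}\, L_K$ with probability at least $1 - \frac{1}{4N^2}$, and the Aleksandrov inequalities give $Q_k(M_N) \ls Q_1(M_N) = w(M_N)$ for every $k$. For the matching lower bound I would use monotonicity in the other direction, $Q_k(M_N) \gr Q_n(M_N) = \vol_n(M_N)^{1/n}/\omega_n^{1/n}$, combined with part (a) and $\omega_n^{1/n} \asymp 1/\sqrt{n}$, to conclude $Q_k(M_N) \gtrsim \sqrt{\log(2N/n)}\, L_K$ on the event of part (a). A union bound of the two events, together with the fact that $e^{-c\sqrt{N}}$ is dominated by $\frac{1}{4N^2}$ once $N$ exceeds an absolute constant, packages everything into the stated probability after a harmless adjustment of constants.

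The main obstacle in this program---and the only step requiring genuinely new input for the cone-measure model---was establishing the analogues of $(\mathbf{P1})$ and $(\mathbf{P2})$, which is precisely the content of Lemmas~\ref{lem:P1-cone} and~\ref{lem:P2-cone}. With these in hand the proof is essentially mechanical; the only minor care needed is verifying that $q, q_0 \lesssim \sqrt{n}$ throughout the range $n \lesssim N \ls e^{\sqrt{n}}$, so that the Paouris-type estimates of Theorem~\ref{thm.vol.Z_q}(b) and the identity \eqref{eq.Iq.formula} remain applicable at the chosen scales.
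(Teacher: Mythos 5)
Your proposal is correct and follows essentially the same route as the paper, which itself leaves the verification implicit by pointing to the analogy with Theorem~\ref{thm.vrad.mean.width.general}. You have merely unpacked the standard chain: Lemma~\ref{lem:P1-cone} plus Klartag--Milman gives the volume lower bound in (a); Lemma~\ref{lem:P2-cone} plus Aleksandrov monotonicity ($Q_k \ls Q_1 = w$) gives the upper bound in (b); and the lower bound on $Q_k$ comes from $Q_k \gr Q_n = \vol_n(M_N)^{1/n}/\omega_n^{1/n}$ together with (a) and $\omega_n^{1/n} \asymp n^{-1/2}$, with the union bound on probabilities handled by the harmless constant adjustments you note.
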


Having proved Theorem~\ref{th:basic-estimates-cone} we can repeat the proof of Theorem \ref{th:random} to get Theorem~\ref{th:random-cone}.

\smallskip

We conclude this section with a proof of an upper bound for the volume radius of a random $M_N$.

\begin{theorem}\label{thm.vrad.upper}Let $K$ be an isotropic body in $\mathbb{R}^n$. If $n\lesssim N\ls e^n$, then
\[
\vol_n(M_N)^{1/n} \ls c \sqrt{\log N/n}\,L_K,
\]
with probability greater than $1-\frac{1}{4N^2}$, where $c>0$ is an absolute constant.
\end{theorem}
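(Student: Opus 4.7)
The plan is to combine Urysohn's inequality with an upper bound on the mean width $w(M_N)$. Since Urysohn gives
\[
\vol_n(M_N)^{1/n}\ls \omega_n^{1/n}w(M_N)\asymp w(M_N)/\sqrt{n},
\]
the target estimate $\sqrt{\log N/n}\,L_K$ reduces to proving $w(M_N)\lesssim \sqrt{\log N}\,L_K$ with probability at least $1-\tfrac{1}{4N^2}$.

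To obtain this mean-width bound I would re-run the Cauchy--Schwarz argument from the proof of Lemma~\ref{lem:P2-cone} with the choice $q_0=2\log(2N)$. Under the hypothesis $N\ls e^n$ one has $q_0\lesssim n$, so the Markov step producing the event
\[
\int_{S^{n-1}}\frac{h_{M_N}(\xi)^{q_0}}{\|\langle\cdot,\xi\rangle\|_{L^{q_0}(\mu_K)}^{q_0}}\,d\sigma(\xi)\ls (c_2 e)^{q_0}
\]
with probability $1-e^{-q_0}=1-\tfrac{1}{4N^2}$ is still valid, as is the subsequent H\"older/Cauchy--Schwarz calculation, the factor $((n+q_0)/n)^{1/q_0}$ remaining bounded. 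Together these give
\[
w(M_N)\lesssim w_{q_0}(Z_{q_0}(K))
\]
on the same event. Invoking \eqref{eq.Iq.formula} one writes $w_{q_0}(Z_{q_0}(K))\asymp \sqrt{q_0/n}\,I_{q_0}(K)$; whenever $q_0\ls \sqrt{n}$, Paouris's Theorem~\ref{thm.paouris.Iq} supplies $I_{q_0}(K)\asymp\sqrt{n}\,L_K$, hence $w_{q_0}(Z_{q_0}(K))\asymp \sqrt{\log N}\,L_K$, and combined with Urysohn this closes the theorem in the range $N\ls e^{\sqrt n}$.

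The hard part will be the extended regime $e^{\sqrt n}<N\ls e^n$, where $q_0>\sqrt{n}$ and Paouris's estimate $I_q(K)\asymp\sqrt{n}L_K$ is no longer available: only $I_q(K)\lesssim qL_K$ follows from the large deviation in Theorem~\ref{thm.paouris.smallball}, which makes $w_{q_0}(Z_{q_0}(K))/\sqrt{n}$ larger than the target by a factor of order $\log N/\sqrt n$. To deal with this one must refine the argument, either by exploiting tail estimates for $\|\cdot\|_2$ specific to the cone measure $\mu_K$ (possibly using that $\|x\|_2\le R(K)\lesssim nL_K$ on $\partial(K)$) to recover $I_{q_0}(K)\lesssim \sqrt{n}L_K$ for the relevant $q_0$, or to bypass the mean-width step in that regime, e.g.\ via the trivial inclusion $M_N\subseteq \conv(K,-K)$ and the Rogers--Shephard inequality, which yields $\vol_n(M_N)^{1/n}\lesssim 1\asymp L_K$ and already matches $\sqrt{\log N/n}\,L_K$ up to constants when $\log N\asymp n$; interpolating between the two regimes then completes the argument.
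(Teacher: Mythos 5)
Your plan is to prove $w(M_N)\lesssim\sqrt{\log N}\,L_K$ and then apply Urysohn. As you correctly diagnose, this reduces to controlling $w_{q_0}(Z_{q_0}(K))\asymp\sqrt{q_0/n}\,I_{q_0}(K)$ with $q_0\asymp\log N$, and once $q_0>\sqrt n$ the positive moments $I_{q_0}(K)$ genuinely cease to be of order $\sqrt n L_K$ (for the cube one has $I_q(K)\asymp q L_K$ for $q\gtrsim\sqrt n$), so no tail improvement can rescue $I_{q_0}(K)\lesssim\sqrt n L_K$. Your two fallbacks also do not close the gap: the trivial bound $\vol_n(M_N)^{1/n}\lesssim 1$ only matches the target $\sqrt{\log N/n}\,L_K$ when $\log N\asymp n$, not in the intermediate range $\sqrt n\ll\log N\ll n$ where the target is $o(1)$, and the ``interpolation'' between the two regimes is not an argument. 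So the proposal has a genuine gap.

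The paper circumvents this cleanly by abandoning Urysohn in favour of the Blaschke--Santal\'o inequality and \emph{negative} mixed widths. From Blaschke--Santal\'o and Jensen one has, for any symmetric $A$ and $q\ls n$,
\[
\vol_n(A)^{1/n}\ls\omega_n^{1/n}w_{-q}(A)\lesssim\frac{w_{-q}(A)}{\sqrt n},
\]
which is at least as strong as Urysohn since $w_{-q}\ls w$. The Cauchy--Schwarz step you already ran (using \eqref{eq.P2-cone.Markov}) is then applied in the form
\[
w_{-q_0}(M_N)\lesssim w_{-q_0/2}(Z_{q_0}(K)),
\]
and via \eqref{eq.w_{-q}.I_{-q}} one gets $w_{-q_0/2}(Z_{q_0/2}(K))\asymp\sqrt{q_0/n}\,I_{-q_0/2}(K)$. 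The crucial point is that $I_{-q}(K)\ls I_2(K)=\sqrt n L_K$ holds for \emph{every} $q>0$ by monotonicity of the $L^p$-norms; no Paouris-type concentration is needed, and the estimate does not degrade as $q_0$ exceeds $\sqrt n$. That is precisely what extends the range from $N\ls e^{\sqrt n}$ (where your argument works) to $N\ls e^n$. In short: replace positive mixed widths by negative ones, so that the relevant moment of $\|\cdot\|_2$ is the trivially bounded $I_{-q}$ rather than the hard-to-control $I_q$.
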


\begin{proof}We fix ${q_0}=2\log (2N)$ and check that
\begin{equation}\label{eq:vrad-upper-1}
w_{-{q_0}}(M_N)  \lesssim w_{-{q_0}/2}(Z_{{q_0}}(K)),
\end{equation}
with probability greater than $1-\frac{1}{4N^2}$. To see this, we write
\begin{align}\label{eq.w{-q}(Zq).lem}
(w_{-{q_0}/2}(Z_{q_0}(K)))^{-{q_0}} &= \left( \int_{S^{n-1}} \frac{1}{h_{Z_{q_0}(K)}(\xi)^{{q_0}/2}} \,d\sigma(\xi)\right)^2 \nonumber\\
                        &\ls \left( \int_{S^{n-1}} \frac{1}{h_{M_N}(\xi)^{q_0}}\,d\sigma(\xi) \right) \left( \int_{S^{n-1}} \frac{h_{M_N}(\xi)^{q_0}}{h_{Z_{q_0}(K)}(\xi)^{q_0}} \,d\sigma(\xi)\right)\nonumber\\
                        &= w_{-{q_0}}(M_N)^{-{q_0}} \left( \int_{S^{n-1}} \frac{h_{M_N}(\xi)^{q_0}}{h_{Z_{q_0}(K)}(\xi)^{q_0}} \,d\sigma(\xi)\right).
\end{align}
In the proof of Lemma~\ref{lem:P2-cone} we saw that
\[
\int_{S^{n-1}} \frac{h_{M_N}(\xi )^{q_0}}{h_{Z_{q_0}(K)}(\xi)^{q_0}} \,d\sigma(\xi) \ls (c_2e)^{q_0}
\]
with probability greater than $1-e^{-q_0}=1-\frac{1}{4N^2}$. Combining the above we get \eqref{eq:vrad-upper-1}.

Recall that, for any symmetric convex body $A$ in ${\mathbb R}^n$ and $q\ls n$,
\[
\left(\frac{\vol_n(A^\circ)}{\omega_n} \right)^{1/n} = \left( \int_{S^{n-1}} h_A(\xi)^{-n}\,d\sigma(\xi) \right)^{1/n}
\gr \left( \int_{S^{n-1}} h_A(\xi)^{-q}\,d\sigma(\xi) \right)^{1/q} = \frac{1}{w_{-q}(A)}.
\]
Using the Blaschke-Santal\'{o} inequality and the fact that $\omega_n^{1/n} \asymp 1/\sqrt{n}$ we get
\begin{equation}\label{eq.vol.bound.w_{-q}}
\vol_n(A)^{1/n} \ls \omega_n^{2/n}\vol_n(A^\circ)^{-1/n} \ls \omega_n^{1/n}w_{-q}(A) \ls c_1 \frac{w_{-q}(A)}{\sqrt{n}},
\end{equation}
for some absolute constant $c_1>0$.

Using successively \eqref{eq.vol.bound.w_{-q}} and \eqref{eq:vrad-upper-1}, we get
\[
\vol_n(M_N)^{1/n} \ls c_1 \frac{w_{-{q_0}}(M_N)}{\sqrt{n}}\lesssim \frac{w_{-{q_0}/2}(Z_{{q_0}}(K))}{\sqrt{n}}.
\]
Since $Z_{q_0}(K)\subseteq cZ_{{q_0}/2}(K)$, we have
\[
\vol_n(M_N)^{1/n} \lesssim \frac{w_{-{q_0}/2}(Z_{{q_0}/2}(K))}{\sqrt{n}}\asymp \frac{\sqrt{{q_0}}}{n}I_{-{q_0}/2}(K),
\]
taking into account \eqref{eq.w_{-q}.I_{-q}}. Finally, since $I_{-q/2}\ls I_2(K) = \sqrt{n}L_K$ is valid for any $q\ls n$, we get
\[
\vol_n(M_N)^{1/n} \lesssim \frac{\sqrt{q_0}}{\sqrt{n}}L_K \asymp \frac{\sqrt{\log 2N}}{\sqrt{n}}L_K,
\]
with probability greater than $1-e^{-q_0}=1-\frac{1}{4N^2}$.
\end{proof}

\section{Beta polytopes}

Recall that, for $\beta>-1$, $\nu_\beta$ is the probability measure supported on $B_2^n$, with density
\begin{equation*}
p_{n,\beta}(x) = c_{n,\beta}(1-\|x\|_2^2)^\beta,
\end{equation*}
where
\[
c_{n,\beta} := \pi^{-n/2}\frac{\Gamma\left(\beta+\frac{n}{2}+1 \right)}{\Gamma(\beta+1)}.
\]
The one-dimensional marginal density of $\nu_\beta$ is given by
\begin{equation*}
f_{\beta}(t) = \alpha_{n,\beta}(1-t^2)^{\beta+\frac{n-1}{2}}, \quad t\in[-1,1],
\end{equation*}
where $\alpha_{n,\beta} := c_{n,\beta}/c_{n-1,\beta}$. For $d\in[0,1]$, let
\[
\B(d) := \int_d^1 f_{\beta}(t)\,dt.
\]
Note that $\B(0)=1/2$, $\B(1)=0$ and $\B$ is a decreasing function of $d$. We will use the following bounds on $\B(d)$, originally established in \cite[Lemma~2.2]{BCGTT}. For any $d\in (0,1)$,
\begin{equation}\label{eq:B.bounds}
\frac{1}{2\sqrt{\pi}}\frac{(1-d^2)^{\beta+\frac{n+1}{2}}}{\sqrt{\beta+\frac{n}{2}+1}} \ls \B(d) \ls \frac{1}{2d\sqrt{\pi}}\frac{(1-d^2)^{\beta+\frac{n+1}{2}}}{\sqrt{\beta+\frac{n}{2}}}.
\end{equation}
Let $N>n$, and $x_1,\ldots,x_N$ be random vectors, chosen independently according to the measure $\nu_\beta$. Let
\[
P_{N,n}^{\beta} := \conv\{x_1,\ldots,x_N\}.
\]
We will refer to this random convex hull as the beta polytope (with parameter $\beta>-1$) in $\mathbb{R}^n$.

\smallskip

In this section, we prove Theorem \ref{th:random-beta}.
The statement will follow from the next two lemmas.

\begin{lemma}\label{lem:beta.lower}
Let $\beta>-1$ and $N>n$. Then if $g(n,\beta) := 2\sqrt{\pi}n\sqrt{\beta+\frac{n}{2}+1}(1+\log(4\sqrt{\beta+\frac{n}{2}+1}))$ and $N\gr c_1g(n,\beta)$,
\[
\vol_n(P_{N,n}^{\beta})^{1/n}\gr c\frac{\sqrt{1-(g(n,\beta)/N)^{\frac{2}{2\beta+n+1}}}}{\sqrt{n}}.
\]
holds with probability greater than $1-e^{-n}$.
\end{lemma}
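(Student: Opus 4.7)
The plan is to prove the stronger statement that, under the hypotheses of the lemma, the random polytope $P_{N,n}^{\beta}$ contains the Euclidean ball of radius $d/2$, where
\[
d:=\sqrt{1-(g(n,\beta)/N)^{2/(2\beta+n+1)}},
\]
with probability at least $1-e^{-n}$. Since $\vol_n(B_2^n)^{1/n}\asymp 1/\sqrt{n}$, such an inclusion immediately yields the claimed lower bound on $\vol_n(P_{N,n}^{\beta})^{1/n}$. The method is the familiar one-dimensional tail + $\delta$-net argument on the sphere, with the radius $d$ calibrated precisely to the function $g(n,\beta)$ in the hypothesis.

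First I would record the one-dimensional deviation. For a fixed $\xi\in S^{n-1}$ and $X\sim \nu_\beta$, the law of $\langle X,\xi\rangle$ has density $f_\beta$, so $\mathbb{P}(\langle X,\xi\rangle\gr d)=\B(d)$. Setting $h_P(\xi):=\max_{i\ls N}\langle x_i,\xi\rangle$, independence of the $x_i$ gives
\[
\mathbb{P}\bigl(h_P(\xi)<d\bigr)=(1-\B(d))^N\ls e^{-N\B(d)}.
\]
The choice of $d$ is arranged so that $(1-d^2)^{\beta+(n+1)/2}=g(n,\beta)/N$, and substituting this into the lower bound in \eqref{eq:B.bounds} produces
\[
N\B(d)\gr \frac{g(n,\beta)}{2\sqrt{\pi}\sqrt{\beta+n/2+1}}=n\bigl(1+\log(4\sqrt{\beta+n/2+1})\bigr).
\]
Next I would fix a $\delta$-net $\mathcal{N}$ of $S^{n-1}$ with $\delta=3/(4\sqrt{\beta+n/2+1})$, so that $|\mathcal{N}|\ls(3/\delta)^n=(4\sqrt{\beta+n/2+1})^n$. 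A union bound over $\mathcal{N}$ then gives that $E:=\{h_P(\xi)\gr d\text{ for every }\xi\in\mathcal{N}\}$ occurs with probability at least $1-(4\sqrt{\beta+n/2+1})^n\,e^{-n(1+\log(4\sqrt{\beta+n/2+1}))}=1-e^{-n}$. Since $P_{N,n}^{\beta}\subseteq B_2^n$, the support function $h_P$ is $1$-Lipschitz on $S^{n-1}$, so on $E$ we have $h_P(\xi)\gr d-\delta$ for every $\xi\in S^{n-1}$; equivalently, $P_{N,n}^{\beta}\supseteq(d-\delta)B_2^n$. If the absolute constant $c_1$ in the hypothesis $N\gr c_1g(n,\beta)$ is taken large enough, then $(g(n,\beta)/N)^{2/(2\beta+n+1)}\ls 1-C/(\beta+n/2+1)$ for a suitably large $C$, which gives $d\gr 2\delta$ and hence $d-\delta\gr d/2$; the volume bound follows.

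The only genuinely delicate point is the joint calibration of $d$, $\delta$ and the union bound: the specific form of $g(n,\beta)$ is arranged precisely so that the hypothesis $N\gr c_1g(n,\beta)$ simultaneously (i) forces $N\B(d)$ to beat the $n\log(3/\delta)$ coming from the metric entropy of $S^{n-1}$ and (ii) keeps $(g(n,\beta)/N)^{2/(2\beta+n+1)}$ bounded away from $1$, so that the final radius $d-\delta$ stays of the same order as $d$. No individual step is technically deep; the effort is in matching three logarithmic scales, namely the exponent $\beta+(n+1)/2$ appearing in \eqref{eq:B.bounds}, the dimension-$n$ loss from covering $S^{n-1}$, and the Lipschitz loss $\delta$, all against the logarithmic factor $1+\log(4\sqrt{\beta+n/2+1})$ built into $g(n,\beta)$.
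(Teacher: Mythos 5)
Your proof is correct and follows essentially the same strategy as the paper: a one-dimensional tail estimate via $\B(d)$ and the lower bound in \eqref{eq:B.bounds}, a union bound over a $\delta$-net of $S^{n-1}$ whose mesh is tuned to the factor $4\sqrt{\beta+\tfrac n2+1}$ appearing in $g(n,\beta)$, and a transfer from net to sphere (you phrase this via $1$-Lipschitzness of $h_P$, the paper via the inclusion $\langle x,\xi\rangle\ls a\Rightarrow \langle x,\xi'\rangle\ls a+\varepsilon$ on the net, which is the same step). The calibration of $d$, $\delta$, and the required largeness of $c_1$ matches the paper's choices up to immaterial constants.
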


\begin{proof}
For any $R>\frac{1}{2}\sqrt{1-(g(n,\beta)/N)^{\frac{2}{2\beta+n+1}}}$, note that
\[
\mathbb{P}\left(\frac{1}{2}\sqrt{1-(g(n,\beta)/N)^{\frac{2}{2\beta+n+1}}}B_2^n\nsubseteq P_{N,n}^\beta\right) \ls \mathbb{P}(RB_2^n\nsubseteq P_{Nn}^\beta)
\]
so the statement of the Lemma will follow, once we prove that
\[
\mathbb{P}(RB_2^n\nsubseteq P_{N,n}^\beta) \ls e^{-n},
\]
for a suitable value of $R$.

Fix some $\varepsilon\in (0,1)$ to be determined, and let ${\cal N}_\varepsilon$ be an $\varepsilon$-net on $S^{n-1}$, of cardinality $|{\cal N}_\varepsilon|\ls \left(\frac{2}{\varepsilon}\right)^n$. Note that, for any $x\in B_2^n$ and $a>0$, if $\langle x,\xi \rangle \ls a$ holds for some $\xi\in S^{n-1}$, then $\langle x,\xi \rangle \ls a+\varepsilon$ holds for some $\xi \in {\cal N}_\varepsilon$. Using the union bound and the independence of the vertices $X_i$, we can then write
\begin{align*}
\mathbb{P}(RB_2^n\nsubseteq P_{N,n}^\beta) &\ls \mathbb{P}\left(h_{P_{N,n}^\beta}(\xi)< R, \hbox{ for some } \xi\in S^{n-1}\right)\\
                                           &\ls \mathbb{P}\left(h_{P_{N,n}^\beta}(\xi)< R+\varepsilon , \hbox{ for some } \xi\in {\cal N}_\varepsilon\right) \ls \left(\frac{2}{\varepsilon} \right)^n \mathbb{P}\left(\max_{1\ls i\ls N}\langle X_i,\xi\rangle < R+\varepsilon\right)\\
                                           &=  \left(\frac{2}{\varepsilon} \right)^n \left(\mathbb{P}(\langle X,\xi\rangle < R+\varepsilon)\right)^N.
\end{align*}
Next note that, for any $\xi\in S^{n-1}$ and $d\in(0,1)$, due to the rotational invariance of $\nu_\beta$,
\begin{align*}
\mathbb{P}(\langle X,\xi\rangle < d) &= \mathbb{P}(\langle X,e_1\rangle < d) = c_{n,\beta} \int_{-1}^d \int_{\sqrt{1-x_1^2}B_2^{n-1}} (1-\|x\|_2^2)^\beta \,d(x_2,\ldots,x_n)\,dx_1 \\
                                          &= c_{n,\beta} \int_{-1}^d \int_{\sqrt{1-x_1^2}B_2^{n-1}} (1-x_1^2)^\beta \left(1-\frac{\sum_{i=2}^n x_i^2}{1-x_1^2}\right)^\beta \,d(x_2,\ldots,x_n)\,dx_1 \\
                                          &= c_{n,\beta} \int_{-1}^d (1-t^2)^\beta \int_{B_2^{n-1}} (1-\|z\|_2^2)^\beta (1-t^2)^{\frac{n-1}{2}} \,dz\,dt \\
                                          &= \alpha_{n,\beta} \int_{-1}^d (1-t^2)^{\beta+\frac{n-1}{2}}\,dt \int_{B_2^{n-1}} p_{n-1,\beta}(z)\,dz \\
                                          &= \int_{-1}^d f_\beta(t)\,dt = 1-\B(d).
\end{align*}
Combining the above, we get
\begin{align*}
\mathbb{P}(RB_2^n\nsubseteq P_{N,n}^\beta) &\ls \left(\frac{2}{\varepsilon} \right)^n (1-\B(R+\varepsilon))^N\\
                                           &\ls \left(\frac{2}{\varepsilon} \right)^n \exp(-N\B(R+\varepsilon)) = \exp\left(n\log(2/\varepsilon) - N\B(R+\varepsilon) \right),
\end{align*}
so we need to prove that $n\log(2/\varepsilon) - N\B(R+\varepsilon) \ls -n$, or, equivalently,
\[
N\B(R+\varepsilon) \gr n\left(1+\log(2/\varepsilon)\right).
\]
Now let $\varepsilon = (2\sqrt{\beta+\frac{n+1}{2}})^{-1}$, and note that if we choose $N>g(n,\beta)\left(1-4\varepsilon^2 \right)^{-\left(\beta+\frac{n+1}{2}\right)}$ (which is satisfied if $N\gr c_1g(n,\beta)$ for an absolute constant $c_1>0$),
it follows that $\varepsilon < \frac{1}{2}\sqrt{1-(g(n,\beta)/N)^{\frac{2}{2\beta+n+1}}}$.
Taking $R = \sqrt{1-(g(n,\beta)/N)^{\frac{2}{2\beta+n+1}}} -\varepsilon$ and using the lower bound in \eqref{eq:B.bounds}, we can see then that
\[
N\B(R+\varepsilon) \gr \frac{N}{2\sqrt{\pi}}\frac{(g(n,\beta)/N)^{\frac{2}{2\beta+n+1}\left(\beta+\frac{n+1}{2}\right)}}{\sqrt{\beta+\frac{n}{2}+1}}
      = \frac{g(n,\beta)}{2\sqrt{\pi}\sqrt{\beta+\frac{n}{2}+1}}  \gr n\left(1+\log(2/\varepsilon)\right),
\]
which completes the proof.
\end{proof}

\smallskip

The average, on $G_{n,k}$, of the volume of $P_F(P_{N,n}^{\beta})$ is related to the volume of $P_{N,k}^{\beta'}$, for some $\beta'>-1$, as follows: Recall that, for $k=1,\ldots,n$, the $k$-th intrinsic volume of a convex body $K$ in $\mathbb{R}^n$ has an integral representation, given by Kubota's formula,
\[
V_k(K) = r_{n,k}\int_{G_{n,k}} \vol_k(P_F(K)) \,d\nu_{n,k}(F),
\]
where $r_{n,k} = \binom{n}{k}\frac{\omega_n}{\omega_{k}\omega_{n-k}}$. It is known (see \cite[Proposition~2.3]{KTT}) that
\[
\mathbb{E}(V_k(P_{N,n}^\beta)) = r_{n,k}\mathbb{E}(\vol_k(P_{N,k}^{\beta+\frac{n-k}{2}})).
\]
It follows, that
\begin{equation}\label{eq.beta.avg.k-dim.vol}
\mathbb{E}\,\Big (\int_{G_{n,k}} \vol_k(P_F(P_{N,n}^\beta)) \,d\nu_{n,k}(F)\Big) = \mathbb{E}(\vol_k(P_{N,k}^{\beta+\frac{n-k}{2}})).
\end{equation}
Using this fact, we prove Lemma \ref{lem:beta.upper}.

\begin{lemma}\label{lem:beta.upper}
For any $\beta>-1$ and $k\gr \log\left(n\left(1+\log\left(4\sqrt{\beta+\frac{n}{2}+1}\right)\right)\right)$, if $N\gr g(n,\beta)c_1^{\beta+\frac{n+1}{2}}$, then the event
\[
\nu_{n,k}\Big(\Big\{F\in G_{n,k} : \vol_k(P_F(P_{N,n}^{\beta}))^{1/k} \ls c_2\frac{1}{\sqrt{k}}\sqrt{1-(g(n,\beta)/N)^{\frac{2}{2\beta+n+1}}}\Big\}\Big) \gr 1-e^{-k},
\]
holds, with probability greater than $1-e^{-k}$.
\end{lemma}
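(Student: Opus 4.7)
The plan is to reduce the statement to a uniform upper bound on the expected $k$-volume of a related beta polytope via equation \eqref{eq.beta.avg.k-dim.vol}, and then apply Markov's inequality twice. Set $\beta' := \beta + (n-k)/2$ and $I := \int_{G_{n,k}} \vol_k(P_F(P_{N,n}^\beta))\,d\nu_{n,k}(F)$. By \eqref{eq.beta.avg.k-dim.vol}, $\mathbb{E}\,I = \mathbb{E}\,\vol_k(P_{N,k}^{\beta'})$; and since $\nu_{\beta'}$ is supported on $B_2^k$, we always have $P_{N,k}^{\beta'}\subseteq B_2^k$, which gives the deterministic estimate $\vol_k(P_{N,k}^{\beta'})\ls \omega_k$. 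Hence $\mathbb{E}\,I \ls \omega_k$.

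I would first apply Markov's inequality to $I$: the event $\{I\ls e^k\omega_k\}$ occurs with probability $\gr 1-e^{-k}$. Conditioning on this event and applying Markov's inequality to the Grassmannian integral yields
\[
\nu_{n,k}\bigl(\bigl\{F\in G_{n,k} : \vol_k(P_F(P_{N,n}^\beta)) > e^{2k}\omega_k\bigr\}\bigr) \ls \frac{I}{e^{2k}\omega_k}\ls e^{-k}.
\]
Therefore, on an event of polytope-probability $\gr 1-e^{-k}$, a $\nu_{n,k}$-set of measure $\gr 1-e^{-k}$ satisfies $\vol_k(P_F(P_{N,n}^\beta))^{1/k}\ls e^2\omega_k^{1/k}$. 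Since $\omega_k^{1/k}\asymp 1/\sqrt{k}$, this is a bound of the form $C/\sqrt{k}$ for an absolute constant $C$.

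To recover the stated form involving the factor $R^* := \sqrt{1-(g(n,\beta)/N)^{2/(2\beta+n+1)}}$, I would invoke the hypothesis $N\gr g(n,\beta)c_1^{\beta+(n+1)/2}$: raising to the power $2/(2\beta+n+1)$ gives $(g(n,\beta)/N)^{2/(2\beta+n+1)}\ls 1/c_1$, so $R^* \gr \sqrt{1-1/c_1}$. Choosing the absolute constant $c_1\gr 2$ yields $R^* \gr 1/\sqrt{2}$, and hence $C/\sqrt{k}\ls \sqrt{2}\,C\cdot R^*/\sqrt{k}$, which is the desired bound after absorbing constants into $c_2$.

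I expect the only real subtlety to be the calibration of the universal constants $c_1,c_2$. Observe that the crude estimate $\mathbb{E}\,\vol_k(P_{N,k}^{\beta'})\ls \omega_k$ we use is off from the true value by a large factor when $\beta'$ is large (since then $P_{N,k}^{\beta'}$ concentrates near the origin), but the looseness is harmless here because the hypothesis on $N$ is arranged precisely to keep $R^*$ bounded away from $0$. The condition $k\gr \log(n(1+\log(4\sqrt{\beta+n/2+1})))$ in the statement does not enter the argument above; presumably it is required when Lemma~\ref{lem:beta.upper} is combined with Lemma~\ref{lem:beta.lower} to prove Theorem~\ref{th:random-beta}, where one needs $e^{-k}$ to dominate the other error terms.
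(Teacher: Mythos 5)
Your proof is correct, and it is genuinely simpler than the paper's. Both arguments hinge on the identity \eqref{eq.beta.avg.k-dim.vol} and two applications of Markov's inequality, but from there the paper works much harder: it compares $\vol_k(P_{N,k}^{\beta'})$ to $\vol_k(rB_2^k)$ by decomposing over the annulus $A = B_2^k\setminus rB_2^k$, invokes the estimate $\mathbb{E}(\vol_m(P_{N,m}^\beta\cap A))\ls N\sup_{x\in A}\B(\|x\|_2)\vol_m(A)$ from \cite{BCGTT}, and then bounds $r^{-k}N\B(r)\ls c^k$ using the upper bound in \eqref{eq:B.bounds} together with both hypotheses (the one on $N$ to guarantee $r\gr c_6$, and the one on $k$ to absorb the factor $g(n,\beta)/\sqrt{\beta+n/2}\asymp n(1+\log(4\sqrt{\beta+n/2+1}))$ into $c^k$). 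You short-circuit all of this with the deterministic inclusion $P_{N,k}^{\beta'}\subseteq B_2^k$, which gives $\mathbb{E}\,I\ls\omega_k$ for free, and then observe the key point — which the paper also uses but doesn't exploit as decisively — that the hypothesis on $N$ already forces $R^*=\sqrt{1-(g(n,\beta)/N)^{2/(2\beta+n+1)}}\gr\sqrt{1-1/c_1}$, so the target bound $c_2R^*/\sqrt{k}$ is comparable to $1/\sqrt{k}$ and the crude volume estimate suffices. This is a clean reduction, and you are right that it renders the hypothesis on $k$ superfluous for this lemma (the paper's argument does rely on it at the step $g(n,\beta)\cdot c_6^{-(k+1)}/(2\sqrt{\pi}\sqrt{\beta+n/2})\ls c^k$); the $k$-hypothesis survives in the theorem statement because it is inherited from the paper's proof route, not because it is intrinsically needed here. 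The only thing to note is that your constants are worse than what the annulus decomposition would give in the regime $R^*\to 1$, but since the lemma's conclusion is stated only up to an unspecified absolute $c_2$, that is not a defect.
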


\begin{proof}
Let $r:=\sqrt{1-(g(n,\beta)/N)^{\frac{2}{2\beta+n+1}}}$, and $S_t = \left\{F\in G_{n,k} : \vol_k(P_F(P_{N,n}^{\beta}))^{1/k} \gr \frac{tr}{\sqrt{k}}\right\}$. By Markov's inequality,
\[
\nu_{n,k}(S_t) \ls \left(\frac{\sqrt{k}}{tr}\right)^k\int_{G_{n,k}} \vol_k(P_F(P_{N,n}^\beta)) \,d\nu_{n,k}(F).
\]
We will prove that there is an absolute constant $c_3>0$ such that
\[
\mathbb{P}\left( \int_{G_{n,k}} \vol_k(P_F(P_{N,n}^\beta)) \,d\nu_{n,k}(F) \ls \left(c_3\frac{r}{\sqrt{k}}\right)^k\right) \gr  1-e^{-k},
\]
which implies the statement of the lemma (with $c_2=c_3e$) if we choose $t=c_3e$. Note that by Markov's inequality again, there exists
an absolute constant $c_4>0$ such that, applying also \eqref{eq.beta.avg.k-dim.vol},
\begin{align*}
\mathbb{P}\left( \int_{G_{n,k}} \vol_k(P_F(P_{N,n}^\beta)) \,d\nu_{n,k}(F) \gr \left(\frac{c_4er}{\sqrt{k}}\right)^k\right)
&\ls e^{-k} \mathbb{E}\left (\frac{\int_{G_{n,k}} \vol_k(P_F(P_{N,n}^\beta))\,d\nu_{n,k}(F)}{\vol_k\left(rB_2^k\right)}\right)\\
&= e^{-k} \mathbb{E}\left (\frac{\vol_k(P_{N,k}^{\beta+\frac{n-k}{2}})}{\vol_k\left(rB_2^k\right)}\right),
\end{align*}
so the problem is reduced to establishing a correct upper bound for $\mathbb{E}(\vol_k(P_{N,k}^{\beta+\frac{n-k}{2}}))$. We will prove that
\[
\mathbb{E}\left (\frac{\vol_k(P_{N,k}^{\beta+\frac{n-k}{2}})}{\vol_k\left(rB_2^k\right)}\right) \ls c_5^k,
\]
for some absolute constant $c_5>0$.

We will use the fact, proved in \cite[Lemma 3.3 (a)]{BCGTT}, that for every $\beta>-1$, $m\in \mathbb{N}$, and any bounded $A\subset \mathbb{R}^m$,
\[
\mathbb{E}(\vol_m(P_{N,m}^\beta\cap A)) \ls N\sup_{x\in A}\B(\|x\|_2)\vol_m(A).
\]
If $A=B_2^k\setminus rB_2^k$, then, since $\|x\|_2\gr r$ for every $x\in A$,
\begin{align*}
\mathbb{E}\left (\frac{\vol_k(P_{N,k}^{\beta+\frac{n-k}{2}})}{\vol_k(rB_2^k)}\right) &\ls 1 + \mathbb{E}\left(\frac{\vol_k(P_{N,k}^{\beta+\frac{n-k}{2}}\cap A)}{\vol_k(rB_2^k)}\right) \ls 1+r^{-k}N\B(r).
\end{align*}
Note that the hypothesis on $N$ implies that $r\gr c_6$ for some absolute constant $c_6>0$. Using the upper bound in \eqref{eq:B.bounds} we get, for $k\gr \log\left(n\log\left(\sqrt{\beta+\frac{n}{2}+1}\right)\right)$,
\begin{equation*}
r^{-k}N\B(r) \ls N \frac{1}{2\sqrt{\pi}\sqrt{\beta+\frac{n}{2}}} \frac{(g(n,\beta)/N)^{\frac{2}{2\beta+n+1}\left(\beta+\frac{n+1}{2}\right)}}{r^{k+1}}
\ls \frac{g(n,\beta )}{2\sqrt{\pi}\sqrt{\beta+\frac{n}{2}}}\,\frac{1}{c_6^{k+1}}\ls c^k,
\end{equation*}
proving the desired result.
\end{proof}

\smallskip

It is now clear that, having proved Lemma~\ref{lem:beta.lower} and Lemma~\ref{lem:beta.upper}, we can conclude Theorem~\ref{th:random-beta}
exactly as we did for Theorem~\ref{th:random}.

\section{The unconditional case}\label{sec:unco}

Let $K$ be an unconditional convex body in ${\mathbb R}^n$ (this means that $K$ has a linear image that is symmetric with respect to the coordinate
subspaces $e_i^{\perp }$, where $\{e_1,\ldots ,e_n\}$ is an orthonormal basis of ${\mathbb R}^n$). Since the quantity $\Phi_{[k]}(K)$ is linearly invariant,
we may assume that $K$ is in the isotropic position. Then, we may assume that $K$ is symmetric with respect to the coordinate
subspaces and from a well-known result of Bobkov and Nazarov (see \cite{Bobkov-Nazarov-2003a} and \cite{Bobkov-Nazarov-2003b}) we have
\begin{equation*}c_1B_{\infty }^n\subseteq K\subseteq c_2nB_1^n\end{equation*}
for some absolute constants $c_1,c_2>0$, and hence the problem can be reduced to the question to give precise estimates for $\Phi_{[k]}(K)$
in the case where $K$ is the cross-polytope $B_1^n={\rm conv}\{\pm e_1,\ldots ,\pm e_n\}$. Indeed, we have
$\vol_k(P_F(K))^{\frac{1}{k}}\ls c_2n\,\vol_k(P_F(B_1^n))^{\frac{1}{k}}$ for every $F\in G_{n,k}$, and hence
\begin{align*}W_{[k,-p]}(K)&=\left (\int_{G_{n,k}}\vol_k(P_F(K))^{-p}\,d\nu_{n,k}(F)\right )^{-\frac{1}{kp}}\\
&\ls c_2n\left (\int_{G_{n,k}}\vol_k(P_F(B_1^n))^{-p}\,d\nu_{n,k}(F)\right )^{-\frac{1}{kp}}\ls c_3W_{[k,-p]}(B_1^n)\end{align*}
for every $p\neq 0$ and $1\ls k\ls n-1$, if we recall that $\vol_n(B_1^n)=\frac{2^n}{n!}$, and hence
$\vol_n(B_1^n)^{-1/n}\asymp n$.

We proceed to examine the case of the cross-polytope $B_1^n$. A first observation is that $B_1^n\supseteq \frac{1}{\sqrt{n}}B_2^n$ and then, clearly,
$P_F(B_1^n)\supseteq \frac{1}{\sqrt{n}}B_2^n\cap F$ for every $1\ls k\ls n-1$ and $F\in G_{n,k}$. This implies that
\begin{equation*}\vol_k(P_F(B_1^n))^{\frac{1}{k}}\gr \frac{\omega_k^{1/k}}{\sqrt{n}}\asymp \frac{1}{\sqrt{kn}}.\end{equation*}
On the other hand, it is known that $\vol_k(B_\infty^n\cap F)\gr \vol_k(B_\infty^k)=2^k$ and, since $B_\infty^n\cap F$ is the polar body of $P_F(B_1^n)$ in $F$,
the Blaschke-Santal\'{o} inequality gives
\begin{equation*}\vol_k(P_F(B_1^n))^{\frac{1}{k}}\ls \vol_k(B_\infty^n\cap F)^{-\frac{1}{k}}\omega_k^{2/k}\ls \frac{c}{k}\end{equation*}
for all $k$ and $F\in G_{n,k}$. We summarize this preliminary information in the next lemma.

\begin{lemma}\label{lem:cross-1}For every $1\ls k\ls n$ and any $F\in G_{n,k}$ we have
\begin{equation*}\frac{c_1}{\sqrt{kn}}\ls \vol_k(P_F(B_1^n))^{\frac{1}{k}}\ls \frac{c_2}{k},\end{equation*}
where $c_1,c_2>0$ are absolute constants. In particular,
\begin{equation*}c_3\sqrt{n/k}\ls W_{[k,-p]}(B_1^n)\ls c_4\,n/k\end{equation*}
for every $p\neq 0$, where $c_3,c_4>0$ are absolute constants.
\end{lemma}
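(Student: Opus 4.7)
The lemma has two parts---a pointwise bound on $\vol_k(P_F(B_1^n))^{1/k}$ and an integrated bound on $W_{[k,-p]}(B_1^n)$---and both reduce to a one-step argument once the right geometric inputs are identified; the paragraph immediately preceding the statement already assembles them, so my proposal is mainly to record them cleanly.

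For the pointwise lower bound I would start from $B_1^n\supseteq \frac{1}{\sqrt{n}}B_2^n$ (equivalent to Cauchy--Schwarz, $\|x\|_1\ls \sqrt{n}\|x\|_2$). Since projection is monotone under inclusion, $P_F(B_1^n)\supseteq \frac{1}{\sqrt{n}}(B_2^n\cap F)$, whose $k$-dimensional volume is $\omega_k n^{-k/2}$; taking $k$-th roots and using $\omega_k^{1/k}\asymp 1/\sqrt{k}$ gives the lower bound $c_1/\sqrt{kn}$. For the upper bound I would exploit the standard duality identity
\[
(P_F(B_1^n))^{\circ_F}=(B_1^n)^\circ\cap F=B_\infty^n\cap F,
\]
where $\circ_F$ denotes polarity inside $F$. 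Vaaler's cube slicing theorem yields $\vol_k(B_\infty^n\cap F)\gr 2^k$ for every $F\in G_{n,k}$, and the Blaschke--Santal\'{o} inequality applied inside $F$ then gives $\vol_k(P_F(B_1^n))\ls \omega_k^2/\vol_k(B_\infty^n\cap F)\ls \omega_k^2/2^k$; taking $k$-th roots and using $\omega_k^{2/k}\asymp 1/k$ yields the upper bound $c_2/k$.

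For the integrated bounds on $W_{[k,-p]}(B_1^n)$ I would simply integrate the pointwise estimates. Writing $a(F):=\vol_k(P_F(B_1^n))^{1/k}$, the pointwise bound $c_1/\sqrt{kn}\ls a(F)\ls c_2/k$ is uniform in $F$, so the $L^{-kp}$-type average $\left(\int_{G_{n,k}} a(F)^{-kp}\,d\nu_{n,k}(F)\right)^{-1/(kp)}$ is sandwiched between the same two quantities for every $p\neq 0$ (a case check on the sign of $p$ shows that the monotonicity of $t\mapsto t^{-kp}$ is undone by the outer $-1/(kp)$-th power in either direction). Multiplying by $\vol_n(B_1^n)^{-1/n}=(n!/2^n)^{1/n}\asymp n$ (by Stirling) delivers $c_3\sqrt{n/k}\ls W_{[k,-p]}(B_1^n)\ls c_4\,n/k$.

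The only non-routine input in this plan is Vaaler's cube slicing inequality; polarity, monotonicity of projections, and integration of pointwise constants are otherwise standard, so no real obstacle should arise. The whole argument is uniform in $p$, which is why the conclusion on $W_{[k,-p]}$ holds for every $p\neq 0$.
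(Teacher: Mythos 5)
Your proposal is correct and follows the paper's argument essentially verbatim: the lower bound from $B_1^n\supseteq\frac{1}{\sqrt{n}}B_2^n$ and monotonicity of projections, the upper bound from the duality $(P_F B_1^n)^{\circ_F}=B_\infty^n\cap F$, Vaaler's cube-slicing estimate $\vol_k(B_\infty^n\cap F)\gr 2^k$, and Blaschke--Santal\'o inside $F$, with the $W_{[k,-p]}$ statement following by sandwiching the average between the uniform pointwise bounds and multiplying by $\vol_n(B_1^n)^{-1/n}\asymp n$. The only cosmetic difference is that you name Vaaler's theorem where the paper merely writes ``it is known.''
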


Next, we examine the typical behavior of a $k$-dimensional projection of $B_1^n$. For a random $F\in G_{n,k}$ we have the following upper bound:

\begin{lemma}\label{lem:cross-2}Let $1\ls k\ls n$. If $k\gr \log n$ then with probability greater than $1-\exp (-k)$ we have
\begin{equation*}\vol_k(P_F(B_1^n))^{1/k}\ls \frac{c\sqrt{\log (1+n/k)}}{\sqrt{kn}}.\end{equation*}
\end{lemma}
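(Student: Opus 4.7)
The idea is to pass to the dual body via Blaschke--Santal\'o and then lower-bound the volume of the resulting random central section of the cube through a Gaussian model.

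Since in $F$ the polar of $P_F(B_1^n)$ is $B_\infty^n\cap F$, the Blaschke--Santal\'o inequality gives
\[
\vol_k(P_F(B_1^n))^{1/k} \leq \omega_k^{2/k}\,\vol_k(B_\infty^n\cap F)^{-1/k} \asymp \frac{1}{k\,\vol_k(B_\infty^n\cap F)^{1/k}}.
\]
It is therefore enough to show that, for a uniformly random $F\in G_{n,k}$,
\[
\vol_k(B_\infty^n\cap F)^{1/k} \gtrsim \sqrt{\tfrac{n}{k\log(1+n/k)}}
\]
holds with probability greater than $1-e^{-k}$.

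To establish this section volume estimate I would realise $F$ as the column span of an $n\times k$ standard Gaussian matrix $G$ with rows $g_1,\ldots,g_n\in\mathbb R^k$, and pick the orthonormal basis $U = G(G^TG)^{-1/2}$ of $F$. A direct change of variables $y=Uz$ identifies $B_\infty^n\cap F$ (as a subset of $F$ equipped with the inherited Hausdorff measure) isometrically with $\det(G^TG)^{1/2}\cdot Z^{\circ}$, where $Z:=\conv\{\pm g_1,\ldots,\pm g_n\}\subset\mathbb R^k$ and $Z^{\circ}=\{z\in\mathbb R^k:\max_i |\langle g_i,z\rangle|\le 1\}$ is its polar. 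On the event of probability at least $1-e^{-ck}$ on which the Wishart inequality $\tfrac12 nI_k\preceq G^TG\preceq 2nI_k$ holds, $\det(G^TG)^{1/2}\asymp n^{k/2}$, and the Bourgain--Milman reverse Santal\'o inequality yields
\[
\vol_k(B_\infty^n\cap F)^{1/k}\asymp \sqrt n\,\vol_k(Z^\circ)^{1/k} \gtrsim \sqrt n\,\omega_k^{2/k}\,\vol_k(Z)^{-1/k} \asymp \frac{\sqrt n}{k\,\vol_k(Z)^{1/k}}.
\]

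The heart of the proof is then a sharp Gluskin-type upper bound on the symmetric Gaussian random polytope $Z$:
\[
\vol_k(Z)^{1/k}\lesssim \omega_k^{1/k}\sqrt{\log(1+n/k)}\asymp \sqrt{\tfrac{\log(1+n/k)}{k}}
\]
with probability greater than $1-e^{-k}$; this is the main obstacle. The factor $\sqrt{\log(1+n/k)}$ is strictly sharper than the $\sqrt{\log n}$ that would come from Urysohn applied to the mean width of $Z$, and is precisely the improvement needed for the lemma to be sharp when $k$ is a positive power of $n$. Such an estimate can be proved by a moment computation with exponent $q\asymp k$: one expands $\vol_k(Z)$ via a Cauchy--Binet/Meyer-type formula in terms of the $k\times k$ subdeterminants of $G$, bounds their Gaussian $q$-moments sharply, and then applies Markov's inequality. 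Plugging this back gives $\vol_k(B_\infty^n\cap F)^{1/k}\gtrsim \sqrt{n/(k\log(1+n/k))}$, and combining with the Blaschke--Santal\'o reduction produces the bound of the lemma. The assumption $k\ge\log n$ is used to ensure that the Wishart concentration event and the failure event for the moment estimate on $\vol_k(Z)$ together have probability at most $e^{-k}$.
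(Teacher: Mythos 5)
Your plan goes through the Gaussian matrix model for a random $F\in G_{n,k}$, which is a genuinely different framing from the paper's. The paper instead works directly with the projected coordinate vectors $P_F(e_j)$: it invokes the Johnson--Lindenstrauss lemma to get $\|P_F(e_j)\|_2\ls (1+\varepsilon)\sqrt{k/n}$ for all $j$ with probability $\gr 1-\exp(-\varepsilon^2k/16)$, then applies the deterministic Carl--Pajor/Gluskin estimate $\vol_k(\{x:|\langle x,w_j\rangle|\ls 1\})^{1/k}\gr c/\sqrt{\log(1+n/k)}$ to the normalized vectors, and finishes with Blaschke--Santal\'o inside $F$. The constraint $k\gr\log n$ enters exactly to make the JL failure probability at most $e^{-k}$. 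Your first step (dualize, reduce to lower-bounding $\vol_k(B_\infty^n\cap F)^{1/k}$) and your use of Bourgain--Milman on $Z^\circ$ are correct and in the same spirit.

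However, there is a genuine gap at the step you yourself flag as ``the main obstacle.'' The claimed bound $\vol_k(Z)^{1/k}\lesssim\omega_k^{1/k}\sqrt{\log(1+n/k)}$ with failure probability $\ls e^{-k}$ is the whole content of the lemma, and the route you sketch does not obviously work: a Cauchy--Binet expansion in $k\times k$ minors computes the volume of the \emph{zonotope} $\sum_i[-g_i,g_i]$, not of the convex hull $Z=\conv\{\pm g_i\}$, so the ``moment computation via Cauchy--Binet/Meyer formula'' does not apply to $\vol_k(Z)$ as stated. In fact you should not need to upper-bound $\vol_k(Z)$ at all. Once you are in the Gaussian picture you want a \emph{lower} bound on $\vol_k(Z^\circ)=\vol_k(\{x:\max_i|\langle g_i,x\rangle|\ls 1\})$; for that, the deterministic Gluskin/Carl--Pajor inequality applies directly after you normalize the $g_i$. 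Concretely: on the event $M:=\max_i\|g_i\|_2\ls C\sqrt{k}$ (which has probability $\gr 1-n e^{-ck}\gr 1-e^{-c'k}$ precisely because $k\gr\log n$), Gluskin gives $\vol_k(Z^\circ)^{1/k}\gr \frac{c}{M\sqrt{\log(1+n/k)}}\gr\frac{c'}{\sqrt{k}\sqrt{\log(1+n/k)}}$. Combining with your Wishart bound $\det(G^TG)^{1/2k}\asymp\sqrt{n}$ and the initial Blaschke--Santal\'o reduction gives exactly the desired estimate. If you patch your argument this way, you have a correct proof, and it is essentially the Gaussian-matrix translation of the paper's JL-plus-Gluskin argument: controlling $\max_i\|g_i\|$ plays the same role as the JL bound on $\max_j\|P_F(e_j)\|$, and both proofs ultimately rest on the same deterministic Gluskin volume estimate.
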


\begin{proof}We combine two well-known facts. The first one is the (upper estimate in the) Johnson-Lindenstrauss lemma from \cite{Johnson-Lindenstrauss-1982} .
\begin{quote}{\sl There exist absolute constants $c_i>0$, $i=1,2,3$, such that if $\varepsilon >0$ and $N<\exp (\varepsilon^2k/16)$ then
for every $\{ y_1,\ldots ,y_N\}\subset S^{n-1}$ there exists a set
${\cal G}\subseteq G_{n,k}$ of measure $\nu_{n,k}({\cal G})\gr 1-\exp (-\varepsilon^2k/16)$ such that for every $F\in {\cal G}$
and all $1\ls j\ls N$ we have}
\begin{equation}\label{eq:JL}\|P_F(y_j)\|_2\ls (1+\varepsilon )\sqrt{k/n}\end{equation}
\end{quote}
\noindent We also use well-known lower bounds for the volume of the intersection of a finite number of strips. Carl-Pajor \cite{Carl-Pajor-1988},
and independently Gluskin \cite{Gluskin-1988} (see also \cite{Barany-Furedi-1988}), obtained a lower bound for the volume of a symmetric polyhedron $K=\{ x\in {\mathbb R}^n:|\langle x,w_i\rangle
|\ls 1,i=1,\ldots ,N\}$ in terms of $\max\{ \|w_i\|_2:1\ls i\ls N\}$:
\begin{quote}
{\sl Let $w_1,\ldots ,w_n$ be vectors spanning ${\mathbb R}^k$ with $\|w_j\|_2\ls 1$ for all $1\ls j\ls n$. Consider the symmetric convex
body $C=\{ x\in {\mathbb R}^k:|\langle x,w_j\rangle |\ls 1,j=1,\ldots ,n\}$. Then,
\begin{equation}\label{eq:GL}\vol_k(C)^{1/k}\gr
\frac{c}{\sqrt{\log (1+n/k)}},\end{equation} where $c>0$ is an
absolute constant.}
\end{quote}
\noindent Consider the standard orthonormal vectors $e_1,\ldots ,e_n$ in ${\mathbb R}^n$. Let $\varepsilon >0$ to be chosen and consider
a subspace $F\in G_{n,k}$ that satisfies $\|P_F(e_j)\|_2\ls (1+\varepsilon )\sqrt{k/n}$ for all $1\ls j\ls n$. If we set $w_j=\frac{1}{1+\varepsilon }\sqrt{\frac{n}{k}}P_F(e_j)$ we have that
$$\max_{1\ls j\ls n}\|w_j\|_2\ls 1.$$
From \eqref{eq:GL} we get that $C=\{ x\in F:|\langle x,w_i\rangle |\ls
1,i=1,\ldots ,n\}$ has volume
\begin{equation*}\vol_k(C)^{1/k}\gr \frac{c_1}{\sqrt{\log (1+n/k)}}.\end{equation*}
Therefore, the polar body $C^{\circ }={\rm conv}(\{ \pm w_1,\ldots ,\pm w_n\})$ of $C$ in $F$ has volume
\begin{equation*}\vol_k(C^{\circ })^{1/k}\ls \frac{c_2}{k}\vol_k(C)^{-1/k}\ls \frac{c_3\sqrt{\log (1+n/k)}}{k}.\end{equation*}
Note that
$$P_F(B_1^n)={\rm conv}(\{\pm P_F(e_1),\ldots ,\pm P_F(e_n)\})=(1+\varepsilon)\sqrt{k/n}\,C^{\circ }.$$
It follows that
$$\vol_k(P_F(B_1^n))^{1/k}\ls  (1+\varepsilon)\frac{c_3\sqrt{\log (1+n/k)}}{\sqrt{kn}}$$
with probability greater than $1-\exp (-\varepsilon^2k/16)$, provided that $n<\exp (\varepsilon^2k/16)$. If $k\gr\log n$
then choosing $\varepsilon =4$ we get the lemma. \end{proof}

\smallskip

From Lemma \ref{lem:cross-2} we easily deduce a strengthened version of Theorem \ref{th:unco-1} for the cross-polytope, which in turn implies Theorem \ref{th:unco-1}.

\begin{theorem}\label{th:cross-1}Let $1\ls k\ls n$. If $k\gr \log n$ then
\begin{equation*}W_{[k,-p]}(B_1^n)\ls c\sqrt{n/k}\sqrt{\log (1+n/k)}\end{equation*}
for all $p\gr c^{\prime }/(n\log n)$, where $c,c^{\prime }>0$ are absolute constants. In particular,
\begin{equation*}\Phi_{[k]}(B_1^n)\ls c\sqrt{n/k}\sqrt{\log (1+n/k)}.\end{equation*}
\end{theorem}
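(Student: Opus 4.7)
The plan is to exploit Lemma \ref{lem:cross-2} in the obvious way: its pointwise upper bound on $\vol_k(P_F(B_1^n))^{1/k}$ on a large-measure subset of $G_{n,k}$ converts directly into a lower bound on the integral $\int_{G_{n,k}}\vol_k(P_F(B_1^n))^{-p}\,d\nu_{n,k}$, which in turn yields an upper bound on $W_{[k,-p]}(B_1^n)$. The only subtlety is tracking constants carefully enough to see exactly where the condition $p\gr c'/(n\log n)$ enters.

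First, I apply Lemma \ref{lem:cross-2} to obtain a set $\mathcal{G}\subseteq G_{n,k}$ with $\nu_{n,k}(\mathcal{G})\gr 1-e^{-k}$ on which $\vol_k(P_F(B_1^n))^{1/k}\ls A$ for $A:=c\sqrt{\log(1+n/k)}/\sqrt{kn}$. Inverting this estimate, raising to the $p$-th power, and integrating only over $\mathcal{G}$ gives
$$\int_{G_{n,k}}\vol_k(P_F(B_1^n))^{-p}\,d\nu_{n,k}(F)\gr (1-e^{-k})\,A^{-kp},$$
so that
$$\left(\int_{G_{n,k}}\vol_k(P_F(B_1^n))^{-p}\,d\nu_{n,k}(F)\right)^{-\frac{1}{kp}}\ls (1-e^{-k})^{-\frac{1}{kp}}A.$$

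The one point requiring care is showing that $(1-e^{-k})^{-1/(kp)}$ is bounded by an absolute constant: this is what will dictate the lower bound needed on $p$. Using $-\log(1-e^{-k})\ls 2e^{-k}$ for $k\gr 1$, we have $(1-e^{-k})^{-1/(kp)}\ls \exp(2e^{-k}/(kp))$, so it suffices that $e^{-k}/(kp)$ be bounded. Since $k\gr \log n$ implies $e^{-k}\ls 1/n$, and the assumption $p\gr c'/(n\log n)$ combined with $k\gr \log n$ gives $kp\gr c'/n$, we obtain $e^{-k}/(kp)\ls 1/c'$. This is precisely where the somewhat strange-looking hypothesis on $p$ is used.

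Finally, since $\vol_n(B_1^n)=2^n/n!$ we have $\vol_n(B_1^n)^{-1/n}\asymp n$, and combining the above,
$$W_{[k,-p]}(B_1^n)=\vol_n(B_1^n)^{-\frac{1}{n}}\left(\int_{G_{n,k}}\vol_k(P_F(B_1^n))^{-p}\,d\nu_{n,k}(F)\right)^{-\frac{1}{kp}}\lesssim n\cdot A\asymp \sqrt{n/k}\sqrt{\log(1+n/k)},$$
which is the first assertion. The bound on $\Phi_{[k]}(B_1^n)=W_{[k,-n]}(B_1^n)$ is the special case $p=n$, which trivially satisfies $n\gr c'/(n\log n)$.
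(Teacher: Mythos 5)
Your proof is correct and follows essentially the same route as the paper's: restrict the integral to the high-measure set from Lemma \ref{lem:cross-2}, pull out the pointwise bound, control $(1-e^{-k})^{-1/(kp)}$ using $k\gr\log n$ together with $p\gr c'/(n\log n)$, and multiply by $\vol_n(B_1^n)^{-1/n}\asymp n$. One small observation worth noting: the paper's printed proof states the constraint as ``$p\ls c/(n\log n)$'' in its final sentence, which is a typo (the direction is reversed); your analysis supplies the correct inequality $p\gr c'/(n\log n)$, matching the theorem statement.
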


\begin{proof}Let $A_{n,k}$ be the subset of $G_{n,k}$ on which we have
\begin{equation*}\vol_k(P_F(B_1^n))^{1/k}\ls \frac{c\sqrt{\log (1+n/k)}}{\sqrt{kn}}.\end{equation*}
From Lemma \ref{lem:cross-2} we have $\nu_{n,k}(A_{n,k})\gr 1-\exp (-k)$. Now given $p>0$ we may write
\begin{align*}W_{[k,-p]}(B_1^n) &\ls c_1n\left (\int_{G_{n,k}}\vol_k(P_F(B_1^n))^{-p}\,d\nu_{n,k}(F)\right )^{-\frac{1}{kp}}
\ls c_1n[\nu_{n,k}(A_{n,k})]^{-\frac{1}{kp}}\left (\frac{c\sqrt{\log (1+n/k)}}{\sqrt{kn}}\right )\\
&\ls c_2\sqrt{n/k}\sqrt{\log (1+n/k)}(1-e^{-k})^{-\frac{1}{kp}}\ls c_3\sqrt{n/k}\sqrt{\log (1+n/k)},
\end{align*}
if we take into account the fact that
\begin{equation*}(1-e^{-k})^{\frac{1}{kp}}\gr \exp (-2e^{-k}/(kp))\gr\frac{1}{2},\end{equation*}
because $ke^{k}\gr n\log n$ and $p\ls c/(n\log n)$.
\end{proof}

\smallskip

We pass to the proof of Theorem \ref{th:unco-2}. Our argument is based on the existence of $k$-dimensional subspaces of ${\mathbb R}^n$
for which $\vol_k(P_F(B_1^n))^{1/k}\ls \frac{c}{\sqrt{kn}}$. For the other extremum, it is not hard to give examples of $F\in G_{n,k}$ such that
$\vol_k(P_F(B_1^n))^{1/k}\asymp \frac{1}{k}$. We can simply choose $F_{\sigma }={\rm span}\{e_j:j\in\sigma \}$ for any $\sigma\subseteq [n]$ with $|\sigma |=k$. Then,
$$\vol_k(P_{F_{\sigma }}(B_1^n))=\frac{2^k}{k!}.$$
The next lemma provides concrete examples of subspaces $F\in G_{n,k}$ for which $\vol_k(P_F(B_1^n))^{1/k}\ls\frac{c}{\sqrt{kn}}$. We may assume that $k<\frac{n}{10}$,
otherwise this estimate holds for a random $F$ by Lemma \ref{lem:cross-2}.

\begin{lemma}\label{lem:cross-3}Let $1\ls k\ls n/10$. There exists $F\in G_{n,k}$ such that
$$\vol_k(P_F(B_1^n))^{1/k}\ls \frac{c}{\sqrt{kn}},$$
where $c>0$ is an absolute constant.
\end{lemma}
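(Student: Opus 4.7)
\emph{Plan.} The plan is to construct $F$ explicitly by partitioning the coordinate set $[n]$ into $k$ blocks of nearly equal size and letting $F$ be the span of the normalized indicator vectors of those blocks. Fix a partition $[n]=I_1\sqcup\cdots\sqcup I_k$ with $m_j:=|I_j|\in\{\lfloor n/k\rfloor,\lceil n/k\rceil\}$ and set
\[
v_j:=\frac{1}{\sqrt{m_j}}\sum_{i\in I_j}e_i,\qquad F:=\mathrm{span}\{v_1,\ldots,v_k\}.
\]
A direct check shows that $v_1,\ldots,v_k$ form an orthonormal system, so $F\in G_{n,k}$.

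The key observation is that $P_F(e_i)$ depends only on the block to which $i$ belongs: for $i\in I_j$,
\[
P_F(e_i)=\sum_{\ell=1}^{k}\langle e_i,v_\ell\rangle v_\ell = m_j^{-1/2}v_j.
\]
Consequently,
\[
P_F(B_1^n)=\conv\{\pm P_F(e_i):1\ls i\ls n\}=\conv\{\pm m_j^{-1/2}v_j:1\ls j\ls k\},
\]
which, expressed in the orthonormal coordinates $(v_1,\ldots,v_k)$, is exactly the weighted cross-polytope $\{y\in\R^k:\sum_{j=1}^{k}m_j^{1/2}|y_j|\ls 1\}$. By the classical formula for the volume of such polytopes,
\[
\vol_k(P_F(B_1^n))=\frac{2^k}{k!\,\prod_{j=1}^{k}m_j^{1/2}}.
\]

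To finish I will invoke the hypothesis $k\ls n/10$, which guarantees $m_j\gr\lfloor n/k\rfloor\gr n/(2k)$ for each $j$ and hence $(\prod_{j}m_j)^{1/k}\gr n/(2k)$, together with Stirling's estimate $(k!)^{1/k}\gr k/e$. Plugging these into the volume formula yields
\[
\vol_k(P_F(B_1^n))^{1/k}=\frac{2}{(k!)^{1/k}\bigl(\prod_j m_j\bigr)^{1/(2k)}}\ls \frac{2e}{k}\sqrt{\frac{2k}{n}}=\frac{c}{\sqrt{kn}},
\]
as required. There is essentially no technical obstacle here; the only point to be careful about is that each block size stays comparable to $n/k$, which is exactly what $k\ls n/10$ (or even $k\ls n/2$) provides. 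The contrast with Lemma~\ref{lem:cross-2} is instructive: a random $F$ produces $n$ essentially distinct vertices $P_F(e_i)$, all of norm $\asymp\sqrt{k/n}$, and the Carl--Pajor--Gluskin machinery then introduces an unavoidable $\sqrt{\log(1+n/k)}$ factor, whereas the partition subspace collapses $P_F(B_1^n)$ to an exact $k$-dimensional cross-polytope with only $2k$ vertices and thereby removes the logarithmic overhead.
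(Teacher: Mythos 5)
Your proof is correct and is essentially the same as the paper's: both partition $[n]$ into $k$ blocks of size roughly $n/k$, take $F$ to be the span of the normalized block-indicator vectors, note that $P_F(B_1^n)$ is then the absolute convex hull of $k$ mutually orthogonal vectors of length $m_j^{-1/2}\ls\sqrt{2k/n}$, and compute the volume exactly as $\frac{2^k}{k!}\prod_j m_j^{-1/2}$. The only cosmetic difference is the choice of block sizes (you balance them as $\lfloor n/k\rfloor$ or $\lceil n/k\rceil$, whereas the paper puts the remainder into the last block), which does not change the argument.
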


\begin{proof}We consider a partition $[n]=\sigma_1\cup\cdots\cup\sigma_k$ into disjoint
subsets $\sigma_i$ with $|\sigma_i|=m_i$, and we define
$$v_i=\frac{1}{\sqrt{m_i}}\sum_{j\in\sigma_i}e_j.$$
We may choose $m_1=\cdots =m_{k-1}=\lfloor n/k\rfloor $ and $m_k=n-(k-1)\lfloor n/k\rfloor $.
Note that $ \frac{n}{2k}\ls\frac{n}{k}-1\ls m_i\ls \frac{n}{k}$ for all $i=1,\ldots ,k-1$ and
$$m_k=n-(k-1)\lfloor n/k\rfloor\gr n-\frac{k-1}{k}n=\frac{n}{k}.$$
Let $F={\rm span}\{ v_1,\ldots ,v_k\}$. Observe that $v_1,\ldots ,v_k$ form an orthonormal basis for $F$ and that if $j\in\sigma_i$ then
$$P_F(e_j)=\langle e_j,v_i\rangle v_i=\frac{v_i}{\sqrt{m_i}}.$$
Therefore, $P_F(B_1^n)$ is the absolute convex hull of $k$ orthogonal vectors of lengths $\frac{1}{\sqrt{m_1}},\ldots ,\frac{1}{\sqrt{m_k}}\ls \sqrt{\frac{2k}{n}}$. It follows that
$$\vol_k(P_F(B_1^n))=\frac{2^k}{k!}\prod_{i=1}^k\frac{1}{\sqrt{m_i}}\ls \frac{2^k}{k!}\left (\frac{2k}{n}\right )^{k/2},$$
and hence $\vol_k(P_F(B_1^n))^{1/k}\ls c/\sqrt{kn}$. \end{proof}

\smallskip

The next lemma follows easily from the definition of the convex hull.

\begin{lemma}\label{lem:cross-4}Let $P={\rm conv}(\{ u_1,\ldots ,u_N\})$ and $Q={\rm conv}(\{ w_1,\ldots ,w_N\})$ be two polytopes in ${\mathbb R}^k$.
Assume that for some $\varepsilon >0$ we have
$\|u_j-w_j\|_2\ls\varepsilon $ for all $j=1,\ldots ,N$. Then,
$$P\subseteq Q+\varepsilon B_2^k\quad\hbox{and}\quad Q\subseteq P+\varepsilon B_2^k.$$
\end{lemma}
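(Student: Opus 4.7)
The plan is to prove this by the standard convex-combination argument: any point in $P$ is a convex combination of the $u_j$'s, and the corresponding combination of the $w_j$'s (with the same coefficients) gives a point in $Q$ that is close to it. By symmetry, it suffices to establish one of the two inclusions.

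First I would take an arbitrary $x\in P$ and write it as $x=\sum_{j=1}^N \lambda_j u_j$, where $\lambda_j\gr 0$ and $\sum_{j=1}^N \lambda_j =1$. Then I would define the candidate nearby point $y=\sum_{j=1}^N \lambda_j w_j$, which lies in $Q$ since $Q$ is the convex hull of $\{w_1,\ldots,w_N\}$.

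Next I would estimate the Euclidean distance $\|x-y\|_2$ using the triangle inequality and the hypothesis:
\[
\|x-y\|_2 = \Bigl\|\sum_{j=1}^N \lambda_j (u_j-w_j)\Bigr\|_2 \ls \sum_{j=1}^N \lambda_j \|u_j-w_j\|_2 \ls \varepsilon \sum_{j=1}^N \lambda_j = \varepsilon.
\]
This shows $x\in y+\varepsilon B_2^k\subseteq Q+\varepsilon B_2^k$, hence $P\subseteq Q+\varepsilon B_2^k$. Interchanging the roles of $P$ and $Q$ (and of $u_j$ and $w_j$) gives the second inclusion $Q\subseteq P+\varepsilon B_2^k$.

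There is no real obstacle here; the lemma is a routine consequence of the convexity of the Euclidean ball together with the fact that every point of a convex hull of finitely many vectors is a convex combination of the generators. The only thing worth being careful about is to use the \emph{same} weights $\lambda_j$ in the expression for $y$ as those appearing in the expression for $x$, so that the differences $u_j-w_j$ can be pulled out and controlled term-by-term.
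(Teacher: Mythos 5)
Your proof is correct, and it is exactly the routine convex-combination argument that the paper has in mind (the paper simply states that the lemma ``follows easily from the definition of the convex hull'' without writing out the details). Nothing further to add.
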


We consider the metrics $\sigma_{\infty }(E,F)=\|P_E-P_F\|$ and $d(E,F)=\inf\{\|I_n-U\|:U\in O(n),U(E)=F\}$ on $G_{n,k}$. We will use
the fact that
$$\sigma_{\infty }(E,F)\ls d(E,F)\ls\sqrt{2}\sigma_{\infty }(E,F)$$
for all $E,F\in G_{n,k}$. First, we fix a subspace $F_0$ that satisfies the estimate of Lemma \ref{lem:cross-3}.

\begin{lemma}\label{lem:cross-5}Let $E$ in $G_{n,k}$ with $d(E,F_0)\ls \frac{1}{\sqrt{n}}$. Then,
$$\vol_k(P_E(B_1^n))^{1/k}\ls \frac{c}{\sqrt{kn}},$$
where $c>0$ is an absolute constant.
\end{lemma}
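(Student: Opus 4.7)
The plan is to transfer information from $F_0$ to $E$ via the closeness of the projection operators, and then use Brunn--Minkowski to absorb the perturbation. First, the stated comparison between $\sigma_\infty$ and $d$ gives $\|P_E-P_{F_0}\|\ls d(E,F_0)\ls 1/\sqrt{n}$. In particular, for each coordinate vector $e_j$,
\[
\|P_E(e_j)-P_{F_0}(e_j)\|_2 \ls \|P_E-P_{F_0}\|\ls \frac{1}{\sqrt{n}}.
\]
Since $P_E(B_1^n)=\conv\{\pm P_E(e_j)\}$ and $P_{F_0}(B_1^n)=\conv\{\pm P_{F_0}(e_j)\}$, Lemma~\ref{lem:cross-4} (applied in $\mathbb{R}^n$, which is a trivial variant of the statement given) yields
\[
P_E(B_1^n)\subseteq P_{F_0}(B_1^n)+\frac{1}{\sqrt{n}}B_2^n.
\]

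Next I would project back onto $E$. Since $P_E(B_1^n)\subseteq E$ and $P_E(B_2^n)=B_2^n\cap E$, applying $P_E$ to both sides of the inclusion above gives
\[
P_E(B_1^n)\subseteq P_E\bigl(P_{F_0}(B_1^n)\bigr)+\frac{1}{\sqrt{n}}(B_2^n\cap E),
\]
an inclusion between convex bodies in the $k$-dimensional space $E$. By the Brunn--Minkowski inequality,
\[
\vol_k(P_E(B_1^n))^{1/k}\ls \vol_k\bigl(P_E(P_{F_0}(B_1^n))\bigr)^{1/k}+\frac{\omega_k^{1/k}}{\sqrt{n}}.
\]

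To finish, observe that $P_E$ restricted to $F_0$ is a linear map between $k$-dimensional Hilbert spaces with operator norm at most $1$, so its determinant has absolute value at most $1$; hence $\vol_k(P_E(P_{F_0}(B_1^n)))\ls \vol_k(P_{F_0}(B_1^n))$. Combining this with the bound $\vol_k(P_{F_0}(B_1^n))^{1/k}\ls c/\sqrt{kn}$ from Lemma~\ref{lem:cross-3} and the fact that $\omega_k^{1/k}\asymp 1/\sqrt{k}$, we obtain
\[
\vol_k(P_E(B_1^n))^{1/k}\ls \frac{c_1}{\sqrt{kn}}+\frac{c_2}{\sqrt{kn}}\ls \frac{c}{\sqrt{kn}},
\]
as desired. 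The only mildly delicate point is the legitimacy of applying Lemma~\ref{lem:cross-4} in the ambient space $\mathbb{R}^n$ rather than in a single $k$-dimensional slice; but the argument behind Lemma~\ref{lem:cross-4} is purely metric and goes through unchanged, so this is not a real obstacle.
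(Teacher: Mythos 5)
Your outline is sound up through the inclusion
\[
P_E(B_1^n)\subseteq P_E\bigl(P_{F_0}(B_1^n)\bigr)+\frac{1}{\sqrt{n}}(B_2^n\cap E),
\]
and the observation that $\vol_k(P_E(P_{F_0}(B_1^n)))\ls\vol_k(P_{F_0}(B_1^n))$ is correct. But the next step misuses Brunn--Minkowski: that inequality states $\vol_k(A+B)^{1/k}\gr\vol_k(A)^{1/k}+\vol_k(B)^{1/k}$, i.e.\ a \emph{lower} bound on the volume of a Minkowski sum. You have written the reverse inequality as if it followed, and it does not: there is no general upper bound of the form $\vol_k(A+B)^{1/k}\ls\vol_k(A)^{1/k}+\vol_k(B)^{1/k}$ (take two very thin rectangles in orthogonal directions to see it fail badly). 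So the chain $P_E(B_1^n)\subseteq A+B$ together with small $\vol_k(A)$ and small $\vol_k(B)$ does not, by itself, control $\vol_k(P_E(B_1^n))$.

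What is missing is an argument that absorbs the $\frac{1}{\sqrt{n}}(B_2^n\cap E)$ term into a bounded dilate of the other summand, i.e.\ an inclusion rather than a volume estimate. This is exactly what the paper does: it transports everything to $F_0$ via the rotation $U$ realizing $d(E,F_0)$, uses Lemma~\ref{lem:cross-4} \emph{inside} $F_0$ to get $U(P_E(B_1^n))\subseteq P_{F_0}(B_1^n)+\frac{2}{\sqrt{n}}B_{F_0}$, and then exploits $\frac{1}{\sqrt{n}}B_{F_0}\subseteq P_{F_0}(B_1^n)$ (since $B_1^n\supseteq\frac{1}{\sqrt{n}}B_2^n$) to conclude $U(P_E(B_1^n))\subseteq 3P_{F_0}(B_1^n)$; volumes then compare by set inclusion. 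Your version can be repaired along the same lines: since $d(E,F_0)\ls 1/\sqrt{n}$, all principal angles between $E$ and $F_0$ are small, so $P_E|_{F_0}$ has smallest singular value at least $\sqrt{1-1/n}\gr 1/\sqrt{2}$; hence $B_2^n\cap E\subseteq\sqrt{2}\,P_E(B_2^n\cap F_0)$, and therefore $\frac{1}{\sqrt{n}}(B_2^n\cap E)\subseteq\sqrt{2}\,P_E(P_{F_0}(B_1^n))$. Feeding this back into your Minkowski sum gives $P_E(B_1^n)\subseteq(1+\sqrt{2})P_E(P_{F_0}(B_1^n))$, after which your determinant bound finishes the proof. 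As written, however, the Brunn--Minkowski step is a genuine error.
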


\begin{proof}Let $U\in O(n)$ such that $U(E)=F_0$ and $\|I_n-U\|\ls\epsilon :=\frac{1}{\sqrt{n}}$. For every $j=1,\ldots ,n$ we set $u_j=P_{F_0}(e_j)$ and $w_j=U(P_E(e_j))$. Then,
$u_j,w_j\in F_0$ and we have
$$\|P_E(e_j)-w_j\|_2= \|P_E(e_j)-U(P_E(e_j))\|_2\ls d(E,F_0)\,\|P_E(e_j)\|_2\ls \epsilon $$
and
$$\|P_E(e_j)-u_j\|_2=\|P_E(e_j)-P_{F_0}(e_j)\|_2\ls \|P_E-P_{F_0}\|=\sigma_{\infty }(E,F_0)\ls \epsilon ,$$
which implies that
$$\|u_j-w_j\|_2\ls 2\epsilon =\frac{2}{\sqrt{n}}.$$
From Lemma \ref{lem:cross-4} we get
$$U(P_E(B_1^n))\subseteq P_{F_0}(B_1^n)+\frac{2}{\sqrt{n}}B_{F_0}\subseteq 3P_{F_0}(B_1^n).$$
Therefore,
$$\vol_k(P_E(B_1^n))^{1/k}\ls 3\vol_k(P_{F_0}(B_1^n))^{1/k}\ls \frac{3c}{\sqrt{kn}},$$
where $c>0$ is the constant in Lemma \ref{lem:cross-3}. \end{proof}

\begin{remark}\rm It was proved by Szarek in \cite{Szarek-1982} that for every $F\in G_{n,k}$ and any $\varepsilon >0$ one has
$$\nu_{n,k}(B_d(F,\varepsilon ))\gr (c_1\varepsilon )^{k(n-k)}.$$
Therefore, the upper bound
$$\vol_k(P_E(B_1^n))^{1/k}\ls \frac{C}{\sqrt{kn}}$$
holds true with probability greater than $(c_1/\sqrt{n})^{k(n-k)}$.
It follows that if $p>0$ then
\begin{align}W_{[k,-p]}(B_1^n) &\ls c_1n\left (\int_{B_d(F_0,1/\sqrt{n})}\vol_k(P_E(B_1^n))^{-p}d\nu_{n,k}(E)\right )^{-\frac{1}{kp}}\\
\nonumber &\ls c_1n\,[\nu_{n,k}(B_d(F_0,1/\sqrt{n}))]^{-\frac{1}{kp}}\cdot\frac{c_2}{\sqrt{kn}}\ls (c_3n)^{\frac{n-k}{p}}c_4\sqrt{n/k}.
\end{align}
In particular we get: For every $p\gr (n-k)(\log n)$ we have
$$W_{[k,-p]}(B_1^n)\ls c\sqrt{n/k}$$
where $c>0$ is an absolute constant, and hence $W_{[k,-p]}(B_1^n)\asymp \sqrt{n/k}$ by Lemma $\ref{lem:cross-1}$.
This proves Theorem \ref{th:unco-2}.
\end{remark}

\smallskip

\noindent {\bf Acknowledgements.} We would like to thank Apostolos Giannopoulos for useful discussions. The second named author is supported by a PhD Scholarship from the Hellenic Foundation for Research and Innovation (ELIDEK); research number 70/3/14547.

\footnotesize
\bibliographystyle{amsplain}

\begin{thebibliography}{100}
\footnotesize





\bibitem{AGA-book}
\textrm{S.\ Artstein-Avidan, A.\ Giannopoulos and V.\ D.\ Milman},
\textit{Asymptotic Geometric Analysis, Vol. I}, Mathematical Surveys and Monographs \textbf{202}, Amer. Math. Society (2015).
\bibitem{Barany-Furedi-1988} \textrm{I.\ B\'{a}r\'{a}ny and Z.\ F\"{u}redi}, \textit{Approximation of the sphere
by polytopes having few vertices}, Proc.\ Amer.\ Math.\ Soc.\ \textbf{102} (1988), 651--659.




\bibitem{Bobkov-Nazarov-2003a} \textrm{S.\ G.\ Bobkov and F.\ L.\ Nazarov}, \textit{On convex bodies and
log-concave probability measures with unconditional basis}, Geom. Aspects of Funct. Analysis, Lecture Notes
in Math. \textbf{1807} (2003), 53--69.
\bibitem{Bobkov-Nazarov-2003b} \textrm{S.\ G.\ Bobkov and F.\ L.\ Nazarov}, \textit{Large deviations of
typical linear functionals on a convex body with unconditional basis}, Stochastic Inequalities and Applications, Progr. Probab.
\textbf{56}, Birkh\"{a}user, Basel (2003), 3--13.

\bibitem{BCGTT} \textrm{G.\ Bonnet, G.\ Chasapis, J.\ Grote, D.\ Temesvari, and N.\ Turchi}, \textit{Threshold phenomena for
high-dimensional random polytopes}, Communications in Contemporary Mathematics, in press, \url{https://doi.org/10.1142/S0219199718500384}.
\bibitem{Borell}
\textrm{C. Borell}, \textit{Convex set functions in d-space}, Period. Math. Hungar. 6 (1975), 111--136.
\bibitem{Bourgain-1991} \textrm{J.\ Bourgain}, \textit{On the distribution of polynomials on high dimensional convex sets}, in Geom. Aspects of Funct.
Analysis, Lecture Notes in Mathematics \textbf{1469}, Springer, Berlin (1991), 127--137.
\bibitem{BM} \textrm{J. Bourgain and V. D. Milman},
\textit{ New volume ratio properties for convex symmetric bodies in
$\mathbb{R}^n$}, Invent. Math. \textbf{88}, no. 2, (1987), 319--340.
\bibitem{BGVV-book} \textrm{S.\ Brazitikos, A.\ Giannopoulos, P.\ Valettas and B-H.\ Vritsiou},
\textit{Geometry of isotropic convex bodies}, Mathematical Surveys and Monographs \textbf{196}, Amer. Math. Society (2014).
\bibitem{Burago-Zalgaller-book} \textrm{Y.\ D.\ Burago and V.\ A.\ Zalgaller}, \textit{Geometric Inequalities}, Springer Series in
Soviet Mathematics, Springer-Verlag, Berlin-New York (1988).


\bibitem{Carl-Pajor-1988} \textrm{B.\ Carl and A.\ Pajor}, \textit{Gelfand numbers of operators with values in a Hilbert space}, Invent.\ Math.\ \textbf{94} (1988), 479--504.

\bibitem{DGT}
\textrm{N.\ Dafnis, A.\ Giannopoulos, and A.\ Tsolomitis}, \textit{Asymptotic shape of a random polytope in a convex body}, J. Funct. Anal. \textbf{257}(9) (2009), 2820--2839.

\bibitem{DGT2}
\textrm{N.\ Dafnis, A.\ Giannopoulos and A.\ Tsolomitis}, \textit{Quermassintegrals and asymptotic shape of random polytopes in an isotropic convex body}, Michigan Mathematical Journal \textbf{62} (2013), 59--79.

\bibitem{Dafnis-Paouris-2012} \textrm{N.\ Dafnis and G.\ Paouris}, \textit{Estimates for the affine and dual
affine quermassintegrals of convex bodies}, Illinois J.\ of Math.\ \textbf{56} (2012), 1005--1021.


\bibitem{Gardner-book}\textrm{R.\ J.\ Gardner}, \textit{Geometric Tomography},
Encyclopedia of Mathematics and its Applications \textbf{58}, Cambridge University Press, Cambridge (2006).


\bibitem{GHiT} \textrm{A.\ Giannopoulos, L.\ Hioni and A.\ Tsolomitis}, \textit{Asymptotic shape of the convex hull
of isotropic log-concave random vectors}, Adv. Appl. Math. \textbf{75} (2016), 116--143.

\bibitem{Gluskin-1988} \textrm{E.\ D.\ Gluskin}, \textit{Extremal properties of
orthogonal parallelepipeds and their applications to the geometry of
Banach spaces}, Mat.\ Sb.\ (N.S.) \textbf{136} (1988), 85--96.

\bibitem{Grinberg-1990} \textrm{E.\ L.\ Grinberg},  \textit{Isoperimetric inequalities and identities for $k$-dimensional cross-sections of a convex bodies}, Math. Ann. \textbf{291} (1991), 75--86.




\bibitem{Horrmann-Prochno-Thale-2017} \textrm{J.\ H\"{o}rrmann, J.\ Prochno, and C.\ Th\"{a}le}, \textit{On the isotropic constant of random polytopes with vertices on an $\ell_p$-sphere}, J. Geom. Anal. \textbf{28} (2018), 405--426.

\bibitem{Johnson-Lindenstrauss-1982} \textrm{W.\ B.\ Johnson and J.\ Lindenstrauss},
\textit{Extensions of Lipschitz mappings into a Hilbert space}, in Conference in modern analysis
and probability (New Haven, Conn.) (1982), 189--206.

\bibitem{KTT} \textrm{Z.\ Kabluchko, D.\ Temesvari and C.\ Th\"{a}le}, \textit{Expected intrinsic volumes and facet
numbers of random beta-polytopes}, Math. Nachrichten {\bf 292} (2019), 79--105.

\bibitem{Klartag-2006} \textrm{B.\ Klartag}, \textit{On convex perturbations with a bounded isotropic constant},
Geom.\ Funct.\ Anal.\ \textbf{16} (2006), 1274--1290.

\bibitem{KM} \textrm{B.\ Klartag and E.\ Milman}, \textit{Centroid Bodies and the Logarithmic Laplace Transform - A
Unified Approach}, J. Funct. Anal. \textbf{262} (2012), 10--34.


\bibitem{Lutwak-1984}\textrm{E.\ Lutwak}, \textit{A general isepiphanic inequality}, Proc. Amer. Math. Soc. \textbf{90} (1984), 415--421.

\bibitem{Lutwak-1988}\textrm{E.\ Lutwak}, \textit{Inequalities for Hadwiger's harmonic Quermassintegrals}, Math. Annalen \textbf{280} (1988),
165--175.

\bibitem{LZ} \textrm{E.\ Lutwak and G.\ Zhang}, \textit{Blaschke-Santal\'{o} inequalities}, J. Differential Geom. \textbf{47} (1997),
1--16.

\bibitem{LYZ} \textrm{E.\ Lutwak, D.\ Yang and G.\ Zhang}, \textit{$L^p$ affine isoperimetric inequalities}, J. Differential
Geom. \textbf{56} (2000), 111--132.





\bibitem{Naor-Romik} \textrm{A.\ Naor and D.\ Romik}, \textit{Projecting the surface measure of the sphere of $\ell_p^n$},
Ann. Inst. H. Poincar\'{e} Probab. Statist. {\bf 39} (2003), 241--261.

\bibitem{Pao2}
\textrm{G.\ Paouris}, \textit{$\Psi_2$ estimates for linear functionals on zonoids},
Lecture Notes in Mathematics 1807 (2003), 211--222.
\bibitem{Pao3} \textrm{G.\ Paouris}, \textit{Concentration of mass in convex bodies}, Geom. Funct. Analysis \textbf{16} (2006),
1021--1049.

\bibitem{Pao4} \textrm{G.\ Paouris}, \textit{Small ball probability estimates for log-concave measures}, Trans. Amer. Math.
Soc. \textbf{364} (2012), 287--308.
\bibitem{Paouris-Pivovarov-2013} \textrm{G.\ Paouris and P.\ Pivovarov},
\textit{Small-ball probabilities for the volume of random convex sets}, Discrete Comput. Geom. \textbf{49} (2013), 601--646.

\bibitem{Petty} \textrm{C. M. Petty}, \textit{Projection bodies}, Proc. Colloq. on Convexity, 1967, 234--241.

\bibitem{Prochno-Thale-Turchi} \textrm{J.\ Prochno, C.\ Th\"{a}le and N.\ Turchi}, \textit{The isotropic constant
of random polytopes with vertices on convex surfaces}, J. Complexity, in press, \url{https://doi.org/10.1016/j.jco.2019.01.001}.
\bibitem{Schneider-book} \textrm{R.\ Schneider}, \textit{Convex Bodies: The Brunn-Minkowski Theory},
Second expanded edition. Encyclopedia of Mathematics and its Applications 151, Cambridge University Press, Cambridge, 2014.

\bibitem{Szarek-1982} \textrm{S.\ J.\ Szarek}, \textit{Nets of Grassmann manifold and orthogonal groups}, Proceedings of Banach Space Workshop, University
of Iowa Press (1982), 169--185.

\bibitem{Zhang} \textrm{G. Zhang}, \textit{Restricted chord projection and affine inequalities}, Geometriae Dedicata, 39 (1991), 213--222.












\end{thebibliography}

\medskip

\medskip

\medskip

\noindent{\bf Keywords:} convex bodies, affine quermassintegrals, random polytopes, asymptotic shape.
\\
\thanks{\noindent {\bf 2010 MSC:} Primary 52A23; Secondary 46B06, 52A40, 60D05.}

\medskip

\medskip

\noindent \textsc{Giorgos \ Chasapis}: Department of
Mathematical Sciences, Kent State University, Kent, OH 44242, USA.

\smallskip

\noindent \textit{E-mail:} \texttt{gchasap1@kent.edu}

\medskip

\noindent \textsc{Nikos \ Skarmogiannis}: Department of
Mathematics, University of Athens, Panepistimioupolis 157-84,
Athens, Greece.

\smallskip

\noindent \textit{E-mail:} \texttt{nikskar@math.uoa.gr}

\medskip

\end{document}